\documentclass[reqno]{amsart}
\usepackage{mathtools,amssymb,wasysym,mathscinet,enumitem,stmaryrd}
\usepackage{mathrsfs}  
\usepackage{geometry}
 \numberwithin{equation}{section}

\usepackage[numbers,sort&compress]{natbib}
\usepackage{hyperref}
\usepackage{esint} 
 
\theoremstyle{plain}
\newtheorem{theorem}{Theorem}[section]
\newtheorem{lemma}[theorem]{Lemma}

\newtheorem{proposition}[theorem]{Proposition}

\theoremstyle{definition}

\theoremstyle{remark}
\newtheorem{remark}[theorem]{Remark}

 \makeatletter
 \newcommand{\norm}{\@ifstar{\@normb}{\@normi}}
 \newcommand{\@normb}[2]{\left\Vert{#1}\right\Vert_{#2}}
 \newcommand{\@normi}[2]{\Vert{#1}\Vert_{#2}}
 \newcommand{\norma}{\@ifstar{\@normba}{\@normia}}
 \newcommand{\@normba}[2]{\left\vert{#1}\right\vert_{#2}}
 \newcommand{\@normia}[2]{\vert{#1}\vert_{#2}}
 \newcommand{\normb}{\@ifstar{\@normba}{\@normib}}
 \newcommand{\@normbb}[2]{\left[{#1}\right]_{#2}}
 \newcommand{\@normib}[2]{[{#1}]_{#2}}
 \makeatother

 \global\long\def\Sob#1#2{{W}^{#1}_{#2}} 
  
 \global\long\def\tSob#1{{H}^{#1}} 
 
 \global\long\def\thSob#1{\dot{H}^{#1}}

 \global\long\def\hbes#1#2{\dot{B}^{#1}_{#2}}

 \global\long\def\Leb#1{L_{#1}} 
 \global\long\def\tLeb#1{\tilde{L}_{#1}}

 \global\long\def\bes#1#2{B^{#1}_{#2}}
 
 \global\long\def\BMO{\mathrm{BMO}}
 
 \newcommand{\action}[1]{\left<#1 \right>}
  
   \DeclareMathOperator{\Div}{div}
 \newcommand{\relphantom}[1]{\mathrel{\phantom{#1}}}
 
 \newcommand{\myd}[1]{\,d{#1}}

 \DeclareMathOperator{\supp}{supp}

\makeatletter
\def\@tocline#1#2#3#4#5#6#7{\relax
  \ifnum #1>\c@tocdepth % then omit
  \else
    \par \addpenalty\@secpenalty\addvspace{#2}%
    \begingroup \hyphenpenalty\@M
    \@ifempty{#4}{%
      \@tempdima\csname r@tocindent\number#1\endcsname\relax
    }{%
      \@tempdima#4\relax
    }%
    \parindent\z@ \leftskip#3\relax \advance\leftskip\@tempdima\relax
    \rightskip\@pnumwidth plus4em \parfillskip-\@pnumwidth
    #5\leavevmode\hskip-\@tempdima
      \ifcase #1
       \or\or \hskip 1em \or \hskip 2em \else \hskip 3em \fi%
      #6\nobreak\relax
    \hfill\hbox to\@pnumwidth{\@tocpagenum{#7}}\par% <---- \dotfill -> \hfill
    \nobreak
    \endgroup
  \fi}
\makeatother

\usepackage{tcolorbox}

\allowdisplaybreaks

\keywords{Porous media; Dirichlet-Neumann operator; free boundary problem; global solution; asymptotic behavior}
\subjclass[2020]{35R35, 35Q35, 35D35, 76B03}
\begin{document}

\title{Global well-posedness of the one-phase Muskat problem with surface tension}

\author[H. Dong]{Hongjie Dong}
\address{Division of Applied Mathematics, Brown University, 182 George Street, Providence, RI 02912, USA}
\email{hongjie\_dong@brown.edu }

\author[H. Kwon]{Hyunwoo Kwon}
\address{Division of Applied Mathematics, Brown University, 182 George Street, Providence, RI 02912, USA}
\email{hyunwoo\_kwon@brown.edu }
\thanks{H. Dong and H. Kwon were partially supported by the NSF under agreement DMS-2350129.}

\begin{abstract}
In this paper, we establish the global well-posedness of the one-phase Muskat problem with surface tension for small initial data. This problem describes the motion of the interface separating a wet region from a dry region within a porous medium, a process governed by Darcy's law. While physically essential, the inclusion of surface tension introduces an additional challenge. We prove that if the initial free boundary is sufficiently small in $H^s$, $s>d/2+1$, then the problem admits a unique global strong solution. Moreover, the solution converges to zero in the Lipschitz norm as $t\rightarrow\infty$. To the best of our knowledge, this work constitutes the first global well-posedness result for the one-phase Muskat problem with surface tension.
\end{abstract}

\maketitle

\section{Introduction}

For nearly a century, the dynamics of immiscible fluids in porous media have been extensively studied due to their importance in groundwater hydrology \cite{KH40}, petroleum engineering \cite{M34}, and many other related areas. The flow in a porous medium is modeled by the experimental Darcy's law \cite{D1856}:
\begin{equation}\label{eq:Darcy-law}
 \frac{\mu}{\kappa} u +\nabla_{x,y} p =-\rho\mathfrak{g}e_{d+1},\quad \Div_{x,y}u=0.
 \end{equation}
Here $u$ denotes the velocity of the fluid, $p$ denotes the pressure, $\mu>0$ stands for the dynamic viscosity, $\kappa>0$ the permeability of the porous media, $\rho>0$ the density of the fluid, and $\mathfrak{g}>0$ the gravity constant.

The corresponding free boundary value problem is known as the Muskat problem, first introduced by Muskat \cite{M34} to model the dynamics of the flow generated by two immiscible fluids in petroleum engineering. Note that \eqref{eq:Darcy-law} is mathematically equivalent to the vertical Hele-Shaw problem driven by gravity. While the horizontal Hele-Shaw problem driven by injection or suction has been widely studied mathematically over the past three decades \cite{C93,CJK07,CJK09,CK06,CP93,ES97a,ES97b,K03,KZ24}, the Muskat problem exhibits a different physical nature and mathematical structure since it is driven by gravitational force.

Although Muskat introduced the two-fluid model, it is also of interest to study the dynamics of the interface between a fluid region and an adjacent dry region. To describe the problem, we denote the interface by $\Sigma_t$, which is the graph of the function $f(x,t)$:
\[ \Sigma_f(t) = \{(x,f(x,t)): x\in \mathcal{O}\} \]
where $\mathcal{O} \in \{\mathbb{R}^d,\mathbb{T}^d\}$ and $d\geq 1$ is the horizontal dimension. The associated time-dependent fluid domain is given by 
\[ \Omega_f(t) =\{(x,y): x\in\mathcal{O},\, y<f(x,t)\}.\]

We assume that the interface satisfies the kinematic boundary condition 
\begin{equation}\label{eq:eta-kinematic}
\partial_t f =\sqrt{1+|\nabla f|^2}(u\cdot n)\quad \text{on } \Sigma_f(t).
\end{equation}
Also, by the Young-Laplace equation, the pressure jump is proportional to the mean curvature of the interface:
\begin{equation}\label{eq:p-boundary}
p=\mathfrak{s}H(f)\quad \text{on } \Sigma_f(t),
\end{equation}
where $\mathfrak{s}\geq 0$ is the surface tension coefficient and the mean curvature $H(f)$ is given by
\[ H(f)=-\Div\left(\frac{\nabla f}{\sqrt{1+|\nabla f|^2}}\right). \]
The system consisting of \eqref{eq:Darcy-law}, \eqref{eq:eta-kinematic}, and \eqref{eq:p-boundary} is called the \emph{one-phase Muskat problem with surface tension}.

To describe the motion of the interface between two immiscible fluids, Hou, Lowengrub, and Shelley \cite{HLS94}  introduced a new formulation of the Muskat problem by arc-length parametrization to capture the pattern formation. In this formulation, Ambrose \cite{A04} proved the local well-posedness of the two-phase Muskat problem without surface tension. Later, Guo, Hallstrom, and Spirn \cite{GHS07} studied the stability and instability of a small-scale perturbation of the zero surface. While this formulation enables us to apply a numerical scheme and stability analysis, the possibility of interface self-intersection makes it difficult to establish global well-posedness for general initial data. To overcome this difficulty, Kim \cite{K03} introduced the notion of viscosity solution for the horizontal Hele-Shaw problem and proved the existence and uniqueness of the viscosity solution to the one-phase Hele-Shaw problem. Later, the regularity of solutions was investigated in \cite{CJK07,CJK09,CK06}. Unlike the formulation in \cite{HLS94}, this formulation is based on the pressure of the fluid, making it less straightforward to explicitly track the geometric evolution of the interface.

Alternatively, C\'ordoba and Gancedo \cite{CG07} observed that the two-phase Muskat problem with equal viscosities can be reformulated solely in terms of the interface via a contour formulation. In this framework, they proved local well-posedness of strong solutions for the stable case when the dense fluid is below the lighter fluid. They also proved the ill-posedness for the unstable regime when the fluids are located in the reverse order. Following this, C\'ordoba, C\'ordoba, and Gancedo \cite{CCG11} extended the formulation to include arbitrary viscosity jumps and proved the local well-posedness of the problem including the one-phase Muskat problem as well as the nongraphical interfaces. With this formulation, a series of works successfully established global well-posedness, finite-time singularity formation, and the regularity properties of the interface \cite{AN23,CCFG13,CCCG12,CCFG16,CCGRS16,CCGS13,CP17,GGHP24,GGPS19,CL21,GL22,GGPS23,S23,S24}. For comprehensive surveys on the two-phase Muskat problem, we refer to \cite{A25,GGHP24}. 

For the one-phase Muskat problem without surface tension, i.e., $\mathfrak{s}=0$, Cheng, Granero-Belinch\'on, and Shkoller \cite{CGS16} first proved local well-posedness for the 2D case via a Lagrangian approach. Later, \cite{CCG11} gave a different proof via contour formulation. Using this new formulation, Castro, C\'ordoba, Fefferman, and Gancedo \cite{CCFG13} proved the existence of splash singularities in the stable regime. Splat singularities were proven to be impossible \cite{CP17}, which are possible in water waves \cite{CS14,CCFG12} even in the presence of surface tension. We note that splash singularities are not possible for the two-phase Muskat problem, which was proved by Gancedo and Strain \cite{GS14}. Another distinct phenomenon arises for initial data with a corner. Agrawal, Patel, and Wu \cite{APW23} observed the waiting-time phenomenon for the one-phase Muskat problem when the initial data have acute corners. However, in the two-fluid case, if we assume that the initial data is in $\Leb{2}$ and has a small Lipschitz constant, then Chen, Nguyen, and Xu \cite{CNX22} proved that the global solution exists and is instantaneously smooth. Moreover, this desingularization mechanism was further studied by Garc\'ia-Ju\'arez, G\'omez-Serrano, Haziot, and Pausader \cite{GGHP24}. This smoothing mechanism was first observed in the construction of the self-similar solution for the two-fluid Muskat problem even though the initial data has a corner \cite{GGNP22,N25}.   

Inspired by the theory of water waves, in the graphical interface case, Alazard, Meunier, and Smets \cite{AMS20} and Nguyen and Pausader \cite{NP20} independently introduced another reformulation of the Muskat problem in terms of the interface using the Dirichlet-Neumann operator. More precisely, the one-phase Muskat problem is equivalent to 
\begin{equation}\label{eq:one-phase-reformulation}
\partial_t f =-\frac{\kappa}{\mu}G(f)(\rho \mathfrak{g}f+\mathfrak{s}H(f)),
\end{equation}
where $G(f)g$ is the Dirichlet-Neumann operator  (see Section \ref{subsec:DN-defn}). The advantage of this formulation is that it allows a rough bottom in the problem and one can apply paradifferential calculus to establish low-regularity well-posedness for the Muskat problem.

To the leading order, note that $\rho\mathfrak{g}f+\mathfrak{s}H(f)\sim -\mathfrak{s}\Delta f$ for small $f$. Then the problem becomes
\begin{equation}\label{eq:natural-scaling-M}
\partial_t f =-\frac{\kappa}{\mu}G(f)(-\mathfrak{s}\Delta f).
\end{equation}

 In the absence of the bottom, if $f$ solves \eqref{eq:natural-scaling-M}, then so does $f_\lambda(x,t)=\lambda^{-1}f(\lambda x,\lambda^3 t)$, $\lambda>0$. We note that when $\mathfrak{s}=0$, the natural scaling of \eqref{eq:one-phase-reformulation} is $f_\lambda(x,t)=\lambda^{-1}f(\lambda x,\lambda t)$. The following function spaces are scaling invariant under both scalings:
\[
\dot{H}^{d/2+1},\quad \hbes{d/p+1}{p,q},\quad \dot{W}^{1,\infty},\quad p,q\in [1,\infty].
\]

When we neglect the surface tension, Alazard, Meunier, and Smets \cite{AMS20} and Nguyen and Pausader \cite{NP20} independently proved that the problem \eqref{eq:one-phase-reformulation} is locally well-posed for any subcritical initial data $\tSob{s}$, $s>d/2+1$. Moreover, \cite{NP20} proved the corresponding result for the two-phase case as well, including viscosity contrasts, rough bottom, and the ceiling. 

The Dirichlet-Neumann formulation also enables us to establish global well-posedness. For sufficiently small initial data, Nguyen \cite{N22} proved small data global well-posedness of the one-phase and two-phase Muskat problem without surface tension. While a nongraphical data might lead to a splash singularity \cite{CCFG13}, the first-named author, Gancedo, and Nguyen \cite{DGN23,DGN23b} proved global well-posedness of the one-phase Muskat problem with arbitrarily large periodic Lipschitz initial data. We also note that Schwab, Tu, and Turanova proved that the one-phase Muskat problem has a unique viscosity solution for any bounded uniformly continuous initial data \cite{STT24}.

Although it is crucial to consider the effect of surface tension in porous media, there are relatively few results on the well-posedness of the problem. Escher and Matioc \cite{EM11} obtained local well-posedness in H\"older space by finding an appropriate initial data set as well as the instability of finger-shaped steady-states. With the same viscosity, Matioc \cite{M19} proved local well-posedness of the two-phase Muskat problem with surface tension when the initial data is in $\tSob{s}(\mathbb{R})$, $s\in (2,3)$. Nguyen \cite{N20} extended the result to arbitrary dimensions $\tSob{s}(\mathbb{R}^d)$, $s>d/2+1$, including viscosity contrasts and the one-phase Muskat problem. Later, Flynn and Nguyen \cite{FN21} obtained a more refined estimate to consider the vanishing surface tension limit problem. We also note that Matioc and Matioc \cite{MM21} established the local well-posedness for $\Leb{p}$-based Sobolev initial data, while Chen, Hu, and Nguyen \cite{CHN24} under different assumptions on the initial data, allowing for a rough bottom.

For global well-posedness, very recently, Jacobs, Kim, and M\'esz\'aros \cite{JKM21} proved the global existence of weak solutions of the two-phase Muskat problem by using optimal transport theory. Also, Chen, Hu, and Nguyen \cite{CHN24} proved global well-posedness of strong solutions for the 2D two-phase Muskat problem in a stable regime with the same viscosity. This result was also proved by Lazar \cite{L24} under different assumptions on the initial data. For the one-phase Muskat problem, Bocchi, Castro, and Gancedo \cite{BCG25} obtained global-in-time a priori estimates for solutions near equilibrium when the fluid is confined within a vessel with vertical walls and below a dry region. We also note that for the one-phase horizontal Hele-Shaw problem with surface tension, Agrawal and Patel \cite{AP25} recently constructed a family of self-similar solutions from initial data having a small corner.

Recently, traveling-wave solutions to the one-phase Muskat problem was also studied. Nguyen and Tice \cite{NT24} first constructed traveling waves with small amplitude when it has a finite flat bottom. Later, Nguyen and Stevenson \cite{NS26} constructed traveling waves with large amplitude. With surface tension, Nguyen \cite{N24} constructed large amplitude traveling wave solutions. Brownfield and Nguyen \cite{BN24} proved the existence of slowly traveling waves near large stationary Darcy flow equilibria.

To the best of our knowledge, there are no global well-posedness results for the one-phase Muskat problem with surface tension. The purpose of this paper is to show that the one-phase Muskat problem is globally well-posed for sufficiently small initial data in $\tSob{s}$, $s>d/2+1$ (Theorem \ref{thm:A}). Moreover, the solution converges to zero in the Lipschitz norm as $t\rightarrow\infty$. Our results can also be interpreted as the stability of the trivial interface, generalizing \cite{GHS07} to higher dimensions for the one-phase case.

\subsubsection*{Idea of the proof}
Let us outline the proofs of the main theorems. We first reformulate the one-phase Muskat problem with surface tension via the Dirichlet-Neumann operator in terms of the interface. One may attempt to use the layer potential approach, as in \cite{DGN23,DGN23b}, to construct a unique global viscosity solution for large initial data. However, when surface tension effects are taken into account, the problem becomes, at leading order, a third-order parabolic equation. Hence unlike the problem without surface tension, we cannot expect the comparison principle to hold for the solution to the one-phase Muskat problem with surface tension, which plays a crucial role in \cite{DGN23,DGN23b} to construct a global solution for large initial data. 

Another possible attempt is to use a paralinearization framework as in \cite{ABZ14,NP20,N22,FN21}. While it is highly successful in obtaining large data local well-posedness, a choice of Alinhac's good unknown will lead to a temporal growth in the a priori estimate, which seems to preclude global well-posedness, as seen in \eqref{eq:energy-estimate-surface-tension}.  

If we assume smallness on the initial data, then one can expand the Dirichlet-Neumann operator to isolate the linear term and the nonlinear remainder (Theorem \ref{thm:first-order-expansion})
\begin{equation}\label{eq:DN-decomposition}
G(f)g=|\nabla|g + R(f;g).
\end{equation}
Such a structure was observed in several papers including \cite{AD15,N23}. The advantage of this expansion is that it reduces the quasilinear problem to a semilinear type problem. For instance, if we neglect the surface tension, then Nguyen \cite{N22} proved that the one-phase Muskat problem admits a global solution in a critical space $\hbes{1}{\infty,1}$. However, with a surface tension term, the mean curvature operator $H(f)$ induces another nonlinearity in the problem. Moreover, the operator $H(f)$ makes it difficult to work on the problem in the critical homogeneous space. Hence, unlike \cite{N22}, we need to expand the Dirichlet-Neumann operator in an inhomogeneous space, which introduces another challenge in obtaining energy estimates for the problem.

The aforementioned difficulty can be overcome by showing that the $\Leb{2}$-norm is the Lyapunov functional for the problem (Theorem \ref{thm:Lyapunov}). Without surface tension, it is an easy consequence of the divergence theorem. However, if we have a surface tension, then it is unclear whether
\[ \int_{\mathbb{R}^d} fG(f)H(f) \myd{x} \geq 0\]
holds. Surprisingly, the above quantity can be rewritten in terms of hydraulic pressures (see \eqref{eq:identity-Hessian}). This hidden structure was first observed in the periodic domain case by Alazard and Bresch \cite{AB24}. To extend this result to an unbounded domain, we use a suitable approximation argument by a smooth function $f$ having compact support and regularity estimates for harmonic functions. These enable us to control the boundary behavior of harmonic functions when we apply the divergence theorem in a truncated domain, and to take the limit to obtain the desired identity.

With these preparations, we first construct a family of solutions $f_R$ of the $R$-truncated problems \eqref{eq:ODE-hilbert}. Owing to the Lyapunov property (Theorem \ref{thm:Lyapunov}), each $f_R$ exists globally in time. Under a suitable smallness assumption on the initial data, a standard bootstrap argument yields that the family ${f_R}$ is uniformly bounded (Proposition \ref{prop:a-priori-estimate}) in the Chemin–Lerner space $\tilde{L}_\infty(0,\infty;\tSob{s})$, for $s > d/2 + 3$ (see Section \ref{sec:LP} for definitions), via interpolation. Moreover, it is uniformly bounded in $\Leb{2}(0,\infty;\thSob{s+3/2})$. Then the contraction estimate (Proposition \ref{prop:contraction-solution}) will give the existence and uniqueness of solutions. However, this is not enough to show global well-posedness for low-regularity initial data. Unlike the remainder in paralinearization of the Dirichlet-Neumann operator \cite{NP20}, the remainder in \eqref{eq:DN-decomposition} does not have enough smoothing effect, and so we lose two derivatives when we estimate $R(f;H(f))$. 

To overcome this difficulty, we use the local well-posedness result of Nguyen \cite{N22} to achieve the desired regularity after a small time by using parabolic smoothing for low regularity initial data $\tSob{s}$, $s>d/2+1$. Hence, by assuming the smallness of the initial data in $\tSob{s}$, $s>d/2+1$, we prove the global well-posedness of the problem. Moreover, by the energy estimate in $\thSob{s}$ and interpolation inequality, we can show the asymptotic behavior of the solution of the problem in the Lipschitz norm.

\subsubsection*{Organization of the paper}
The remainder of this paper is organized as follows. Notation and main results will be presented in  Section \ref{sec:main-theorem}. The necessary preliminaries are then provided in Section \ref{sec:prelim}, including the Dirichlet-Neumann operator, Littlewood-Paley projections, Besov spaces, and Chemin-Lerner spaces. In Section \ref{sec:Lyapunov}, we show that $\Leb{2}$-norm is a Lyapunov functional for the one-phase Muskat problem. In Section \ref{sec:first-order-expansion}, we obtain the first-order expansion of the Dirichlet-Neumann operator. Proofs of technical lemmas required for this expansion are deferred to Appendix \ref{app:Qa-Qb-estimates}. The main theorems will be proved in Section \ref{sec:GWP}. Finally, in Appendix \ref{app:interpolation}, we provide results on interpolation spaces.

\section{Notation and Main results}\label{sec:main-theorem}

\subsection{Notation}

Let $\mathbb{R}^d$ and $\mathbb{T}^d=\mathbb{R}^d/\mathbb{Z}^d$ denote the standard $d$-dimensional Euclidean spaces and the $d$-dimensional torus, respectively. For $x_0 \in \mathbb{R}^d$, $d\geq 1$, $B_r(x_0)$ denotes the open ball of radius $r$ centered at $x_0$. When $x_0=0$, we shall write $B_r=B_r(0)$. We write $\nabla$ the spatial gradient and $\nabla_{x,y}=(\nabla,\partial_y)$. 

For $I\subset \mathbb{R}$, a Banach space $X$, and $p\in [1,\infty]$, we write $\Leb{p}(I;X)$ which consists of all strongly measurable function $f:I\rightarrow X$ with 
\[ \norm{u}{\Leb{p}(I;X)}=\left(\int_I \norm{u(t)}{X}^p dt\right)^{1/p}<\infty \]
if $p<\infty$ and 
\[ \norm{u}{\Leb{\infty}(I;X)}=\sup_{t\in I} \norm{u(t)}{X}<\infty \]
if $p=\infty$. We write $\Leb{p}^tX$ for abbreviation. We frequently write $\mathcal{F}:[0,\infty)\rightarrow [0,\infty)$ a nondecreasing function throughout this paper. For two Banach spaces $X$ and $Y$ with $X\subset Y$, we write $X\hookrightarrow Y$ if there exists a constant $C>0$ such that $\norm{u}{Y}\leq C\norm{u}{X}$ for all $u\in X$. For two Banach spaces $Y$ and $Z$, define 
\[ \norm{u}{Y\cap Z}=\norm{u}{Y}+\norm{u}{Z}.\]
Finally, we write $A\apprle B$ if there exists a constant $C>0$ independent of $A$ and $B$ such that $A\leq CB$. We write $A\apprle_{p_1,\dots,p_k} B$ if the constant $C$ depends on the parameters $p_1$, ..., $p_k$. For a $d$-dimensional hypersurface $\Sigma \subset \mathbb{R}^{d+1}$, we denote by $\mathcal{H}^d$ the $d$-dimensional Hausdorff measure on $\Sigma$.

\subsection{Main results}
 
Our first result is the global well-posedness of the one-phase Muskat problems with surface tension if the initial data is sufficiently small. 

\begin{theorem}\label{thm:A}
Let $s>d/2+1$. Then there exists $\varepsilon_0=\varepsilon_0(s,d)>0$ such that if 
\[ \norm{f_0}{\tSob{s}}<\varepsilon_0,\]
then the problem admits a unique global solution to \eqref{eq:one-phase-reformulation} satisfying
\[ f\in C([0,\infty);\tSob{s})\cap \Leb{2}(0,\infty;\thSob{s+3/2}),\quad \partial_t f\in \Leb{2}(0,T;\tSob{s-3/2}),\quad f|_{t=0}=f_0 \]
for any $T>0$.
\end{theorem}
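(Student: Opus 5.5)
The proof will follow the strategy described in the introduction, in three stages: a high-regularity global theory, a smoothing step for low-regularity data, and a gluing argument.

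\emph{Stage 1: high-regularity global theory.} Fix $\sigma>d/2+3$.

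First, use Theorem \ref{thm:first-order-expansion} to write \eqref{eq:one-phase-reformulation} as
\[
\partial_t f+\tfrac{\kappa\mathfrak{s}}{\mu}|\nabla|^3 f=-\tfrac{\kappa\rho\mathfrak{g}}{\mu}|\nabla| f+\mathcal{N}(f).
\]
The nonlinear term $\mathcal{N}(f)$ collects three contributions:
\begin{itemize}
\item $R(f;\rho\mathfrak{g}f+\mathfrak{s}H(f))$;
\item $|\nabla|$ applied to the nonlinear part $H(f)+\Delta f$ of the curvature;
\item the linear gravity term, which has a good sign and is treated as part of the dissipation.
\end{itemize}

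Next, consider the Friedrichs-truncated system \eqref{eq:ODE-hilbert}. For each $R$ it is an ODE in $\tSob{\sigma}$ with a local solution $f_R$. By the Lyapunov property (Theorem \ref{thm:Lyapunov}), $\norm{f_R(t)}{\Leb{2}}$ is nonincreasing. Combined with the frequency truncation, this controls every norm, so $f_R$ is global.

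Then run a bootstrap in
\[
X=\tLeb{\infty}(0,\infty;\tSob{\sigma})\cap \Leb{2}(0,\infty;\thSob{\sigma+3/2}).
\]
Apply $\Delta_j$, take the $\Leb{2}$ energy estimate, and use the $|\nabla|^3$ dissipation. For $\norm{f_R}{X}\le 2C\varepsilon$ one should obtain
\[
\norm{f_R}{X}\le C\norm{f_0}{\tSob{\sigma}}+C\norm{f_R}{X}^2.
\]
The low frequencies are handled by the $\Leb{2}$ Lyapunov bound, since the inhomogeneous norm cannot be controlled by the homogeneous dissipation alone. The high frequencies are handled by the remainder bounds of Theorem \ref{thm:first-order-expansion}, which are quadratic. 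Interpolating the Chemin--Lerner norm with the dissipation then gives Proposition \ref{prop:a-priori-estimate}.

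Finally, the contraction estimate (Proposition \ref{prop:contraction-solution}), proved in a lower norm, shows that $(f_R)$ is Cauchy as $R\to\infty$. The limit is a global solution, and the same estimate gives uniqueness.

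\emph{Stage 2: smoothing for low-regularity data.} Now let $f_0\in\tSob{s}$ with $s>d/2+1$ and $\norm{f_0}{\tSob{s}}<\varepsilon_0$. Invoke the local well-posedness theory of \cite{N20} (Nguyen's surface tension result in $\tSob{s}$). For small data it gives a solution on a fixed interval $[0,T_0]$, with $T_0$ uniform for data of norm at most $1$, lying in $C([0,T_0];\tSob{s})\cap \Leb{2}(0,T_0;\tSob{s+3/2})$. The solution also obeys a bound of the form $\lesssim\norm{f_0}{\tSob{s}}$.

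Iterating the parabolic gain of $3/2$ derivatives finitely many times produces a time $t_0\in(0,T_0)$ with
\[
\norm{f(t_0)}{\tSob{\sigma}}\le C(t_0)\norm{f_0}{\tSob{s}}.
\]
The point to check carefully is that this smoothing bound is linear in the small data, so that $f(t_0)$ is small in $\tSob{\sigma}$ once $\varepsilon_0$ is small. I would obtain this from the solution map at the level of the linearized equation with a small perturbation, using the energy estimates from the local theory.

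\emph{Stage 3: gluing and conclusion.} Restart Stage 1 from $f(t_0)$ to get a global solution on $[t_0,\infty)$. By local uniqueness it agrees with the $\tSob{s}$ solution on the overlap. The two pieces give the required regularity on $[t_0,\infty)$ via $\tSob{\sigma}\hookrightarrow\tSob{s}$, while the local theory covers $[0,t_0]$. This yields
\[
f\in C([0,\infty);\tSob{s})\cap \Leb{2}(0,\infty;\thSob{s+3/2}).
\]
The time derivative satisfies $\partial_t f\in \Leb{2}(0,T;\tSob{s-3/2})$ by reading it off the equation, since $G(f)H(f)$ maps $\tSob{s+3/2}$ into $\tSob{s-3/2}$.

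\emph{Main obstacle.} I expect the hardest step to be the bootstrap in Stage 1. The remainder $R(f;H(f))$ loses two derivatives, so the quadratic estimate must be absorbed by the $\Leb{2}(\thSob{\sigma+3/2})$ dissipation while also being kept at high regularity. The Chemin--Lerner frequency-localized norms and the assumption $\sigma>d/2+3$ are needed exactly at this point.
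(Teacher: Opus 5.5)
Your proposal is correct and follows essentially the same route as the paper. The steps match: Friedrichs truncation made global by the $L^2$ Lyapunov identity; a Chemin--Lerner bootstrap at regularity above $d/2+3$, with the contraction estimate giving convergence of $f_R$ and uniqueness among small solutions; then Nguyen's local theory with time-averaging to gain $3/2$ derivatives twice (from $s$ to $s+3$) before restarting.

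Two small refinements. First, the linear-in-data smoothing bound needs no linearization: the local energy estimate already gives $\|f\|_{L^2(0,T_0;H^{s+3/2})}\lesssim\|f_0\|_{H^s}$. Second, the bound in $L^2(t_0,\infty;\dot H^{s+3/2})$ does not follow from $H^\sigma\hookrightarrow H^s$ alone. It comes from interpolating the $\dot H^{\sigma+3/2}$ dissipation with the low-order $\dot H^{1/2}$ and $\dot H^{3/2}$ dissipation supplied by the Lyapunov functional.
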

\begin{remark}\leavevmode
\begin{enumerate}
\item Our result can be regarded as an extension of Guo, Hallstrom, and Spirn \cite{GHS07} for the one-phase Muskat problem in the stable regime.
\item In the absence of surface tension, Dong, Gancedo, and Nguyen \cite{DGN23,DGN23b} proved the global well-posedness for the one-phase Muskat problem with any periodic Lipschitz data. It is widely open whether we can prove global well-posedness of the one-phase Muskat problem with surface tension with the large initial data. See Remark \ref{rem:exponential-decay}.
\end{enumerate}
\end{remark}

We also have the following asymptotic behavior of solutions to the one-phase Muskat problems.
\begin{theorem}\label{thm:B}
Let $s>d/2+1$ and suppose that $f_0 \in \tSob{s}$. Then there exists $\varepsilon_0=\varepsilon_0(s,d)>0$ such that if 
\[ \norm{f_0}{\tSob{s}}<\varepsilon_0,\]
then the global solution $f$ constructed in Theorem \ref{thm:A} satisfies 
\[ \lim_{t\rightarrow \infty}\norm{f(t)}{\thSob{s}}=0.\]
Moreover, we have 
\[ \lim_{t\rightarrow \infty}\norm{f(t)}{\Sob{1}{\infty}}=0.\]
\end{theorem}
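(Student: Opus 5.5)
The proof will rest on two facts: the solution from Theorem \ref{thm:A} is square integrable in time in a strictly higher homogeneous norm, and the $\thSob{s}$ energy is almost nonincreasing. Together with interpolation these give the decay.

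\textbf{Step 1: high norm is integrable in time.} Since $f\in\Leb{2}(0,\infty;\thSob{s+3/2})$, the function $t\mapsto\norm{f(t)}{\thSob{s+3/2}}^2$ is integrable on $(0,\infty)$. By the Lyapunov property (Theorem \ref{thm:Lyapunov}), $\norm{f(t)}{\Leb{2}}\le\norm{f_0}{\Leb{2}}<\varepsilon_0$ for all $t$. Interpolating between $\Leb{2}$ and $\thSob{s+3/2}$ gives
\[
\norm{f(t)}{\thSob{s}}\le \norm{f(t)}{\Leb{2}}^{\theta}\norm{f(t)}{\thSob{s+3/2}}^{1-\theta},\qquad \theta=\tfrac{3/2}{s+3/2}.
\]
Hence $\norm{f(t)}{\thSob{s}}^{2/(1-\theta)}$ is integrable on $(0,\infty)$. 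In particular $\liminf_{t\to\infty}\norm{f(t)}{\thSob{s}}=0$.

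\textbf{Step 2: almost monotonicity of the $\thSob{s}$ energy.} To upgrade $\liminf$ to $\lim$, I would reuse the energy inequality behind the a priori estimate (Proposition \ref{prop:a-priori-estimate}), now started at an arbitrary time $t_0\ge0$. Write the equation as $\partial_t f+\frac{\kappa\mathfrak{s}}{\mu}|\nabla|^3 f=N(f)$ using the expansion of Theorem \ref{thm:first-order-expansion}. For small solutions this yields
\[
\frac{d}{dt}\norm{f}{\thSob{s}}^2+c\norm{f}{\thSob{s+3/2}}^2\le C\,\mathcal{F}(\norm{f}{\tSob{s}})\,\norm{f}{\tSob{s}}\,\norm{f}{\thSob{s+3/2}}^2 .
\]
The smallness $\norm{f(t)}{\tSob{s}}\lesssim\varepsilon_0$ for all $t$ is already part of the global construction, so the right-hand side can be absorbed. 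It follows that $\norm{f(t)}{\thSob{s}}$ is nonincreasing for $t\ge t_0$. If this differential form is only available for the truncated problems \eqref{eq:ODE-hilbert}, a weaker integrated form suffices:
\[
\norm{f(t)}{\thSob{s}}^2\le C\norm{f(t_0)}{\thSob{s}}^2+C\int_{t_0}^\infty\norm{f}{\thSob{s+3/2}}^2\,d\tau\quad\text{for } t\ge t_0 .
\]
It follows the same bootstrap, and its tail term tends to zero as $t_0\to\infty$.

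\textbf{Step 3: conclusion.} Choose $t_0$ along a sequence where $\norm{f(t_0)}{\thSob{s}}\to0$. Step 2 then gives $\lim_{t\to\infty}\norm{f(t)}{\thSob{s}}=0$.

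\textbf{Step 4: Lipschitz decay.} Fix $d/2<\sigma<d/2+1<s$. By Sobolev embedding,
\[
\norm{f}{\Leb{\infty}}\lesssim\norm{f}{\Leb{2}}^{1-\alpha}\norm{f}{\thSob{s}}^{\alpha},\qquad \norm{\nabla f}{\Leb{\infty}}\lesssim\norm{f}{\thSob{\sigma}}^{1/2}\norm{f}{\thSob{s}}^{1/2},
\]
for suitable $\sigma$ (e.g.\ by Littlewood--Paley splitting of low and high frequencies). Both right-hand sides involve $\norm{f}{\Leb{2}}$ and intermediate homogeneous norms, which stay bounded because $\norm{f(t)}{\tSob{s}}$ is uniformly bounded. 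Each also carries a positive power of $\norm{f}{\thSob{s}}\to0$, so $\norm{f(t)}{\Sob{1}{\infty}}\to0$.

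\textbf{Main obstacle.} I expect Step 2 to be the main difficulty. One must check that the nonlinear remainder $R(f;H(f))$, which loses derivatives, can be controlled by $\norm{f}{\thSob{s+3/2}}^2$ times a small factor, for all $s>d/2+1$ and uniformly in time. My plan is to handle this by parabolic smoothing. For $t\ge1$ the solution already lies in $\tilde{L}_\infty\tSob{s'}$ with $s'>d/2+3$ and small norm, so the high-regularity estimates of Proposition \ref{prop:a-priori-estimate} are available from that time on. I would apply them at regularity $s'$ and then return to $\thSob{s}$ by interpolating against $\Leb{2}$.
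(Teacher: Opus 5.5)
Your Steps 1, 3 and 4 are the paper's argument: interpolate $L_2$ against $\dot H^{s+3/2}$ to find arbitrarily late times $t_0$ where $\|f(t_0)\|_{\dot H^s}$ is small, propagate that smallness forward, then interpolate between $L_2$ and $\dot H^s$ for the Lipschitz norm. The gap is in Step 2. The inequality
$\frac{d}{dt}\|f\|_{\dot H^s}^2+c\|f\|_{\dot H^{s+3/2}}^2\le C\varepsilon\|f\|_{\dot H^{s+3/2}}^2$
does not follow from the available estimates, and the claimed monotonicity is not justified.

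To see why, bound $\mathcal{R}(f;f+H(f))$ with the homogeneous estimate of Theorem \ref{thm:first-order-expansion}. This brings in $\|g\|_{\dot B^{d/2+1}_{2,1}}$ for $g=f+H(f)$. Together with the gravity part $g=f$, you get lower-order quantities $\|f\|_{\dot B^{d/2+1}_{2,1}\cap\dot B^{d/2+3}_{2,1}}$ and $\|f\|_{\dot H^{s-1/2}}$, each multiplied by $\|f\|_{\dot H^{s+3/2}}$; see \eqref{eq:final-inequality-1}. Now take $f$ concentrated at frequency $\lambda\to0$. The product $\|f\|_{\dot B^{d/2+1}_{2,1}}\|f\|_{\dot H^{s+3/2}}$ exceeds $\varepsilon\|f\|_{\dot H^{s+3/2}}^2$ by a factor $\lambda^{d/2-s-1/2}/\varepsilon\to\infty$. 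Smallness of $\|f\|_{H^s}$ does not compensate for this.

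The paper handles these terms by interpolating them between $\dot H^{1/2}$ and $\dot H^{s+3/2}$. What one actually obtains is therefore not monotonicity but
\[
\|f(t)\|_{\dot H^s}^2\le\|f(t_0)\|_{\dot H^s}^2+\int_{t_0}^t\|f\|_{\dot H^{1/2}}^2\,d\tau .
\]
The tail goes to zero as $t_0\to\infty$ only because $\int_0^\infty\|f\|_{\dot H^{1/2}}^2\,d\tau<\infty$. That comes from the dissipative part of the Lyapunov inequality in Theorem \ref{thm:Lyapunov}, carried to the limit in \eqref{eq:a-priori-estimate-in-the-end}. You use Theorem \ref{thm:Lyapunov} only for the $L_2$ bound; its $\dot H^{1/2}$ dissipation is the missing ingredient.

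Your fallback inequality has the same defect. Terms of size $\varepsilon\|f\|_{\dot H^{s+3/2}}^2$ are absorbed into the left-hand side, so the surviving tail must be the $\dot H^{1/2}$ one, not an $\int\|f\|^2_{\dot H^{s+3/2}}$ one.

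With this correction your argument closes as in the paper. Your remark about first passing to a regularity $s'>d/2+3$ is sound. That is available for $t\ge t_1$ by Step 2 of the proof of Theorem \ref{thm:A}. You would run the energy estimate at level $s'$, now with the $\dot H^{1/2}$ tail, and then interpolate $\dot H^s$ between $L_2$ and $\dot H^{s'}$. This is a sensible refinement: the paper's energy inequality was derived in Proposition \ref{prop:a-priori-estimate} only for $s>d/2+3$, whereas for Theorem \ref{thm:B} the paper just invokes ``a similar proof'' directly at level $s>d/2+1$.
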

\begin{remark}\leavevmode
Without surface tension, Nguyen \cite{N23} proved decay of the $C^\alpha$-norm of the viscosity solution to the one-phase Muskat problem on periodic space without a smallness assumption on the initial data. In our result, we show that the solution converges to zero in $\tSob{s}$ exponentially in time if the domain is periodic. See Remark \ref{rem:exponential-decay}.
\end{remark}

\section{Preliminaries}\label{sec:prelim}
In this section, we first recall the definition of the Dirichlet-Neumann operator and list several mapping properties of the operator. Next, we reformulate the one-phase Muskat problem in terms of the Dirichlet-Neumann operator. In Section \ref{sec:LP}, we introduce Littlewood-Paley projections and define Besov spaces. We also recall the paradifferential theorem in Besov spaces, which will be used in this paper. Finally, we introduce Chemin-Lerner spaces, where we will expand the Dirichlet-Neumann operator in Section \ref{sec:first-order-expansion}.

\subsection{Dirichlet-Neumann operator}\label{subsec:DN-defn}
For $f:\mathbb{R}^d\rightarrow \mathbb{R}$, we define 
\begin{equation*}
\begin{gathered}
\Omega_f =\{(x,y): x\in \mathbb{R}^d, y<f(x)\},\quad \Sigma_f = \{(x,f(x)): x\in \mathbb{R}^d\},\\
N(x)=(-\nabla f(x),1),\quad n(x)=\frac{N(x)}{|N(x)|}.
\end{gathered}
\end{equation*}
The Dirichlet-Neumann operator $G(f)$ is defined by 
\[ (G(f)g)(x)=\partial_N \phi(x,f(x)):=\lim_{h\rightarrow 0-}\frac{1}{h}[\phi((x,f(x))+hN(x))-\phi(x,f(x))],\]
where $\phi(x,y)$ solves the elliptic problem
\begin{equation}\label{eq:DN-elliptic}
\left\{
\begin{alignedat}{2}
\Delta_{x,y}\phi&=0&&\quad \text{in } \Omega_f,\\
\phi&=g&&\quad \text{on } \Sigma_f,\\
\nabla_{x,y}\phi &\in \Leb{2}(\Omega_f).
\end{alignedat}
\right.
\end{equation}
Similarly, one can also define the Dirichlet-Neumann operator $G(f)$ when $f:\mathbb{T}^d\rightarrow\mathbb{R}$.

If $f$ and $g$ are time-dependent, we write
\[ (G(f)g)(x,t)=(G(f(t))g(t))(x).\]

The Dirichlet-Neumann operator is well defined when $f\in \Sob{1}{\infty}$ and $g\in \thSob{1/2}$, see \cite[Proposition 3.6]{NP20} for the proof.
\begin{proposition}\label{prop:existence-DN}
If $f\in \Sob{1}{\infty}$ and $g\in \thSob{1/2}$, then $G(f)g$ is well defined in $\thSob{-1/2}$. Moreover, there exists a constant $C>0$ such that 
\[
\norm{G(f)g}{\thSob{-1/2}}\leq C(1+\norm{f}{W^{1,\infty}})^2\norm{g}{\thSob{1/2}}.
\]
\end{proposition}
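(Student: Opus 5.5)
The plan is the standard variational construction. We first solve the elliptic problem \eqref{eq:DN-elliptic} by flattening the domain, then define $G(f)g$ by duality through Green's identity, and finally read off the bound with constants tracked in terms of $\norm{f}{W^{1,\infty}}$.

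\emph{Step 1: flatten the domain.} Use the Lipschitz change of variables $\Phi(x,z)=(x,z+f(x))$, which maps $\Omega_0=\{z<0\}$ onto $\Omega_f$. Its Jacobian has determinant $1$. Both $D\Phi$ and $D\Phi^{-1}$ are bounded by $1+\norm{\nabla f}{\Leb{\infty}}$. The problem $\Delta_{x,y}\phi=0$ becomes $\Div_{x,z}(A\nabla_{x,z}\psi)=0$ in $\Omega_0$, where $\psi=\phi\circ\Phi$ and $A=(D\Phi)^{-1}(D\Phi)^{-T}$. The matrix $A$ is uniformly elliptic, with bounds depending only on $\norm{f}{W^{1,\infty}}$.

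\emph{Step 2: existence and the energy bound.} Extend $g\in\thSob{1/2}$ to the half-space using a harmonic (Poisson-type) extension $\underline{g}(x,z)=e^{z|\nabla|}g$. This satisfies $\norm{\nabla_{x,z}\underline g}{\Leb{2}(\Omega_0)}\apprle\norm{g}{\thSob{1/2}}$. Write $\psi=\underline g+v$, where $v$ lies in the homogeneous space $\dot H^1_0(\Omega_0)$, i.e. the completion of $C_c^\infty(\Omega_0)$ under $\norm{\nabla v}{\Leb{2}}$. Lax--Milgram gives a unique such $v$ with
\[ \norm{\nabla_{x,z}\psi}{\Leb{2}(\Omega_0)}\apprle \norm{A}{\Leb{\infty}}\norm{A^{-1}}{\Leb\infty}\norm{g}{\thSob{1/2}}. \]
Pulling back to $\Omega_f$ gives the $\Leb{2}$ bound on $\nabla_{x,y}\phi$. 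Uniqueness in the class $\nabla\phi\in\Leb{2}$ follows from the same coercivity. The main obstacle is the homogeneous setting: we have no $\Leb{2}$ control on $\phi$ itself. For this reason the argument must be carried out in $\dot H^1$-type spaces, modulo constants, so that the trace and extension maps are bounded $\thSob{1/2}\leftrightarrow \dot H^1(\Omega_0)$. We check this by Fourier analysis in $x$.

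\emph{Step 3: weak definition of $G(f)g$ and the bound.} For smooth data, Green's formula gives
\[ \int_{\mathbb{R}^d} (G(f)g)\,h\,dx=\int_{\Omega_f}\nabla_{x,y}\phi\cdot\nabla_{x,y}\underline{h}_f\,dx\,dy \]
for any extension $\underline h_f$ of $h$. Note that $\partial_N$ uses the unnormalized normal $N$, so the surface measure factor $|N|$ cancels exactly. We take this identity as the definition for general $g$; it is independent of the chosen extension because $\phi$ is a weak solution. Choosing $\underline h_f=(e^{z|\nabla|}h)\circ\Phi^{-1}$ gives
\[ |\langle G(f)g,h\rangle|\le \norm{\nabla\phi}{\Leb2}\norm{\nabla\underline h_f}{\Leb2}\le C(1+\norm{f}{W^{1,\infty}})^2\norm{g}{\thSob{1/2}}\norm{h}{\thSob{1/2}}. \]
Each of $\nabla\phi$ and $\nabla\underline h_f$ contributes one factor $(1+\norm{\nabla f}{\Leb\infty})$, coming from $D\Phi^{-1}$; a more careful use of the Lax--Milgram constant keeps the total power at $2$. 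Duality $(\thSob{1/2})'=\thSob{-1/2}$ then gives the claim. Finally, the weak definition agrees with the pointwise definition of $G(f)g$ whenever the latter makes sense, by density of smooth $f$ and $g$. The torus case is identical, with the zero mode handled separately.
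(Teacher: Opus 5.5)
Your proposal is correct and is essentially the standard variational argument behind \cite[Proposition 3.6]{NP20}, which is all the paper invokes for this statement: flatten by $\Phi$, solve for $\phi-\underline g$ in $\dot H^1_0$ by Lax--Milgram, and define $G(f)g$ by duality through Green's formula. The one step to make explicit is the source of the power $2$: your displayed bound with $\norm{A}{\Leb{\infty}}\norm{A^{-1}}{\Leb{\infty}}$ gives a higher power, whereas the Dirichlet principle (Lax--Milgram in the $A$-energy norm, where the coercivity and continuity constants are $1$) gives $\norm{\nabla_{x,y}\phi}{\Leb{2}(\Omega_f)}\leq \norm{\nabla_{x,y}\underline g_f}{\Leb{2}(\Omega_f)}\leq (1+\norm{\nabla f}{\Leb{\infty}})\norm{g}{\thSob{1/2}}$, and pairing this with the analogous bound for $\underline h_f$ yields exactly $(1+\norm{f}{W^{1,\infty}})^2$.
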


Many researchers studied the mapping property of the Dirichlet-Neumann operator in the context of water waves (see the history in \cite{IP18}). Using paradifferential calculus, Alazard, Burq, and Zuily \cite{ABZ14} obtained regularity estimates of the Dirichlet-Neumann operator and its contraction property as well. Later, Nguyen and Pausader \cite{NP20} refined the paralinearization formula for the Dirichlet-Neumann operator. See \cite[Theorem 3.12]{ABZ14} or \cite[Theorem 3.14]{NP20} for the proof.

\begin{proposition}\label{prop:continuity-DN}
Let $s_0>d/2+1$ and $\sigma \geq 1/2$. Then we have 
\[
\norm{G(f)g}{\tSob{\sigma-1}}\leq \mathcal{F}(\norm{f}{\tSob{s_0}})(\norm{g}{\tSob{\sigma}}+\norm{f}{\tSob{\sigma}}\norm{g}{\tSob{s_0}})
\]
for all $f,g\in \tSob{\max\{s_0,\sigma\}}$. Moreover, if $1/2\leq \sigma\leq s_0$, then 
\[
\norm{G(f_1)g-G(f_2)g}{\tSob{\sigma-1}}\leq \mathcal{F}(\norm{(f_1,f_2)}{\tSob{s_0}})\norm{f_1-f_2}{\tSob{\sigma}}\norm{g}{\tSob{s_0}}
\]
for all $f_j,g\in \tSob{s_0}$.
\end{proposition}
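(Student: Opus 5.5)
The plan is to flatten the fluid domain and reduce both estimates to variable-coefficient elliptic estimates in a fixed half-space, in the spirit of \cite{ABZ14,NP20}.

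\textbf{Flattening and the formula for $G(f)g$.} For $f\in\tSob{s_0}\hookrightarrow \Sob{1}{\infty}$, I would introduce Lannes' regularizing diffeomorphism
\[
(x,z)\mapsto (x,\rho(x,z)),\qquad \rho(x,z)=z+e^{\delta z\langle\nabla\rangle}f(x),\quad z<0.
\]
Choosing $\delta$ small depending on $\norm{f}{\Sob{1}{\infty}}$ gives $\partial_z\rho\geq 1/2$. The function $v(x,z)=\phi(x,\rho(x,z))$ then solves a divergence-form equation $\nabla_{x,z}\cdot(A\nabla_{x,z}v)=0$ in $\{z<0\}$ with $v|_{z=0}=g$. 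The coefficient matrix $A$ depends smoothly on $(\nabla\rho,\partial_z\rho)$ and is uniformly elliptic. One has
\[
G(f)g=\Big(\tfrac{1+|\nabla\rho|^2}{\partial_z\rho}\partial_z v-\nabla\rho\cdot\nabla v\Big)\Big|_{z=0}.
\]
The key feature of $\rho$ is that $\nabla_{x,z}\rho-(0,1)$ lies in $C_z(\tSob{s_0-1})\cap\Leb{2,z}(\tSob{s_0-1/2})$ with bounds depending only on $\norm{f}{\tSob{s_0}}$.

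\textbf{First estimate.} I would start from the variational bound. It gives $\nabla_{x,z}v\in\Leb{2}$ and hence the case $\sigma=1/2$, which is essentially Proposition \ref{prop:existence-DN}. I would then bootstrap tangential regularity in steps of $1/2$. Write the equation as $\partial_z^2v+\alpha\Delta v+\beta\cdot\nabla\partial_z v-\gamma\partial_zv=0$ and estimate $\nabla_{x,z}v$ in $X^{\mu}:=C_z(\tSob{\mu})\cap\Leb{2,z}(\tSob{\mu+1/2})$ for $\mu\le\sigma-1$. At each step the commutators $[\langle\nabla\rangle^\mu,\alpha]\Delta v$ and $[\langle\nabla\rangle^\mu,\beta]\cdot\nabla\partial_zv$ are handled by tame product estimates. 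These put the high norm either on $v$, using the previous step, or on the coefficients. In the latter case $\norm{\nabla\rho}{\tSob{\sigma}}\lesssim\norm{f}{\tSob{\sigma}}$ multiplies $\norm{\nabla_{x,z}v}{\Leb{\infty}}\lesssim\norm{g}{\tSob{s_0}}$. This yields exactly the tame form $\norm{g}{\tSob{\sigma}}+\norm{f}{\tSob{\sigma}}\norm{g}{\tSob{s_0}}$. Evaluating the boundary formula at $z=0$ with the same product rules gives the $\tSob{\sigma-1}$ bound on $G(f)g$.

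\textbf{Contraction estimate.} I would use the shape-derivative formula, valid for smooth data and then extended by density:
\[
\frac{d}{d\epsilon}G(f+\epsilon h)g\Big|_{\epsilon=0}=-G(f)(hB)-\Div(hV),\qquad B=\frac{\nabla f\cdot\nabla g+G(f)g}{1+|\nabla f|^2},\quad V=\nabla g-B\nabla f.
\]
Setting $f_\tau=f_2+\tau(f_1-f_2)$ and integrating in $\tau\in[0,1]$ gives
\[
G(f_1)g-G(f_2)g=-\int_0^1\big[G(f_\tau)(hB_\tau)+\Div(hV_\tau)\big]\,d\tau,\qquad h=f_1-f_2.
\]
The first estimate with $\sigma=s_0$ gives $B_\tau,V_\tau\in\tSob{s_0-1}$. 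The term $\Div(hV_\tau)$ then needs $hV_\tau\in\tSob{\sigma}$ for $1/2\le\sigma\le s_0$. For the term $G(f_\tau)(hB_\tau)$, the first estimate applied at level $\sigma$ needs $hB_\tau\in\tSob{\sigma}$, and its second summand is harmless because $\norm{hB_\tau}{\tSob{s_0}}$ is multiplied by $\norm{f_\tau}{\tSob{\sigma}}$. The difficulty is that the product $\tSob{s_0-1}\times\tSob{\sigma}\to\tSob{\sigma}$ only holds for $\sigma\le s_0-1$.

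\textbf{Main obstacle.} The main obstacle is the range $s_0-1<\sigma\le s_0$, where $h$ and $B_\tau,V_\tau$ sit at the same regularity level. Here I would not go through the shape derivative. Instead I would directly subtract the flattened problems for $f_1$ and $f_2$. The difference $w=v_1-v_2$ solves the $f_1$-equation with zero boundary data and source $\nabla\cdot((A_1-A_2)\nabla v_2)$. Since $A_1-A_2$ depends linearly on $\nabla_{x,z}(\rho_1-\rho_2)\in X^{\sigma-1}$ and $\nabla v_2\in\Leb{\infty}$, the tangential bootstrap of the first estimate applies to $w$. It gives $\nabla w\in X^{\sigma-1}$ with bound $\mathcal{F}\,\norm{h}{\tSob{\sigma}}\norm{g}{\tSob{s_0}}$. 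What remains is to check that the paraproduct with the rough factor $\nabla v_2$ is legitimate down to $\sigma=1/2$, that is, in negative Sobolev regularity $\sigma-1\ge-1/2$. I expect this to be the delicate point, and I would handle it by duality against the variational formulation.
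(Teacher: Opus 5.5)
Your plan is essentially the argument of the works the paper relies on. The paper gives no proof of its own and cites \cite[Theorem 3.12]{ABZ14} and \cite[Theorem 3.14]{NP20}, which likewise flatten with a smoothing diffeomorphism, propagate tangential regularity for the flattened elliptic problem with paraproduct estimates, and obtain the contraction bound by subtracting the two flattened problems, a route that, unlike the shape derivative, covers all of $1/2\le\sigma\le s_0$.

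A few details need fixing when you write it out:
\begin{itemize}
\item For $s_0$ close to $d/2+1$ the coefficients are only $C^{r}_*$ with $r=s_0-1-d/2$. So the commutators must be treated paradifferentially, and each bootstrap step gains only about $r$ rather than $1/2$.
\item $\delta$ must be chosen in terms of $\norm{f}{\tSob{s_0}}$, because $\norm{f}{\Sob{1}{\infty}}$ does not control $\langle\nabla\rangle f$.
\item In the shape-derivative branch, $hB_\tau$ lies only in $\tSob{\sigma}$, so the tame summand $\norm{f_\tau}{\tSob{\sigma}}\norm{hB_\tau}{\tSob{s_0}}$ is not harmless; it is in general infinite. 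There you need the non-tame bound $\norm{G(f)g}{\tSob{\sigma-1}}\le\mathcal{F}(\norm{f}{\tSob{s_0}})\norm{g}{\tSob{\sigma}}$ for $\sigma\le s_0$, or you can simply use the direct difference throughout.
\end{itemize}
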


The following parabolicity estimate was shown in \cite[Proposition 4.3]{NP20}.

\begin{proposition}\label{prop:parabolicity-DN}
If $f\in \tSob{s}(\mathbb{R}^d)$ with $s>d/2+1$, $d\geq 1$, then there exists $c_0>0$ such that 
\[ G(f)f(x)<1-c_0\quad \text{for all } x\in\mathbb{R}^d.\]
\end{proposition}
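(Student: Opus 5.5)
The plan is to rewrite $G(f)f$ through the function $y$, which is harmonic and equals $f$ on $\Sigma_f$. Let $\phi$ be the solution of \eqref{eq:DN-elliptic} with $g=f$; this is admissible since $f\in\tSob{s}\subset\thSob{1/2}$ and Proposition \ref{prop:existence-DN} applies. Set
\[
\psi(x,y)=y-\phi(x,y).
\]
Then $\psi$ is harmonic in $\Omega_f$ and $\psi=0$ on $\Sigma_f$. Since $\partial_N y=1$ for $N=(-\nabla f,1)$, we get
\[
G(f)f=\partial_N\phi=1-\partial_N\psi .
\]
So it suffices to show $\partial_N\psi\ge c_0>0$ uniformly on $\Sigma_f$. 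This is a uniform Hopf lemma for a negative harmonic function vanishing on the boundary. The sign works out because $N$ points upward, i.e.\ out of $\Omega_f$.

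\textbf{Step 1: sign and a quantitative upper bound for $\psi$.} I would first show $\|\phi\|_{L^\infty(\Omega_f)}\le\|f\|_{L^\infty}$. The maximum principle applies in this unbounded domain: $\nabla_{x,y}\phi\in L^2$, and $\phi$ is the variational solution, so it can be obtained as a limit of truncations of $f$ or via the Poisson kernel after flattening. Hence
\[
\psi\le y+\|f\|_{L^\infty}\quad\text{in }\Omega_f,
\]
so $\psi\le -1$ on $\{y\le -\|f\|_{L^\infty}-1\}$. Since $\psi$ is bounded above, tends to $-\infty$ as $y\to-\infty$, and vanishes on $\Sigma_f$, a Phragm\'en--Lindel\"of type maximum principle in the strip-like region between $\Sigma_f$ and $\{y=-M\}$ gives $\psi<0$ in $\Omega_f$.

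\textbf{Step 2: uniform negativity at a fixed distance from the boundary.} Fix $r>0$ small, depending only on $\|f\|_{\Sob{1}{\infty}}$. Consider points $(x,f(x)-r)$ and connect each to $(x,-\|f\|_{L^\infty}-1)$ by a vertical segment of length at most $2\|f\|_{L^\infty}+1$. Because $f$ is Lipschitz, every point of this segment stays at distance $\gtrsim r$ from $\Sigma_f$. A Harnack chain with a number of balls controlled by $r$, $\|f\|_{\Sob{1}{\infty}}$ and $\|f\|_{L^\infty}$, applied to the positive harmonic function $-\psi$, then gives
\[
-\psi(x,f(x)-r)\ge c_1>0
\]
uniformly in $x\in\mathbb{R}^d$. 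The constants are translation invariant in $x$, so uniformity holds even though the domain is unbounded.

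\textbf{Step 3: uniform Hopf lemma.} Since $s>d/2+1$, Sobolev embedding gives $f\in C^{1,\alpha}$ with bounded norm. Hence $\Sigma_f$ satisfies a uniform interior ball condition: for $C^{1,\alpha}$ boundaries this holds in the sense of a uniform interior paraboloid/cone, and a $C^{1,\alpha}$ Hopf lemma (Widman-type, or comparison with a barrier in an annulus when $f\in C^{1,1}$, which is available if $s>d/2+2$) applies. Compare $-\psi$ on a region of fixed size $r$ with an explicit barrier. Using the lower bound from Step 2 on the inner boundary and $\psi=0$ on $\Sigma_f$, the comparison yields
\[
\partial_N\psi(x,f(x))\ge c_0>0,
\]
with $c_0$ depending only on $\|f\|_{\tSob{s}}$. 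This gives $G(f)f\le 1-c_0$.

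The main obstacle is Step 3 at the sharp regularity $s>d/2+1$, where only $C^{1,\alpha}$ regularity of $\Sigma_f$ is available and the classical ball barrier fails. I would first try the barrier
\[
w=(d(X)-\epsilon\, d(X)^{1+\alpha}),
\]
built from a regularized distance $d$, or alternatively flatten the boundary by the map $(x,z)\mapsto(x,z+f(x))$ and use boundary Schauder/Krylov-type estimates for the resulting divergence-form equation with $C^\alpha$ coefficients. Both approaches give uniform constants by translation invariance.
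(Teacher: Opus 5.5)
Your proof is correct in outline, but it takes a different route from the paper. The paper does not prove this statement; it cites \cite[Proposition 4.3]{NP20}. The argument behind that citation, echoed in the proof of Proposition~\ref{prop:estimates-on-pressure}(i), is qualitative. The maximum principle gives $Q=\phi-y>0$, and Hopf's lemma then gives $G(f)f(x)<1$ at each point. A pointwise gap becomes uniform cheaply, because $G(f)f\in \tSob{s-1}\hookrightarrow C_0(\mathbb{R}^d)$ for $s-1>d/2$: the function $1-G(f)f$ is at least $1/2$ off a compact set and attains a positive minimum on it.

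You replace this decay-at-infinity step with a translation-invariant Harnack chain and a quantitative Hopf lemma. Your Steps 1--2 are sound:
\begin{itemize}
\item points with $y\le f(x)-r$ lie at distance at least $r/\sqrt{1+\|\nabla f\|_{L^\infty}^2}$ from $\Sigma_f$;
\item $-\psi\ge 1$ on $\{y\le -\|f\|_{L^\infty}-1\}$.
\end{itemize}
The cost is that you need a \emph{uniform} $C^{1,\alpha}$ Hopf lemma rather than a pointwise one. (Even the pointwise Hopf lemma needs more than an interior ball at this regularity, so that difficulty is shared.) The gain is a gap $c_0$ depending only on $\|f\|_{L^\infty}$ and $\|f\|_{C^{1,\alpha}}$, hence only on $\|f\|_{\tSob{s}}$. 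Your argument also never uses decay of $f$, so it applies verbatim to periodic or merely bounded $C^{1,\alpha}$ graphs.

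Step 3 has two slips to fix.
\begin{itemize}
\item An interior cone condition does not suffice for Hopf, which fails at Lipschitz corners. What you actually need is the interior $C^{1,\alpha}$-paraboloid condition.
\item Your barrier $d-\epsilon d^{1+\alpha}$ has the wrong sign. $\Delta(d^{1+\alpha})$ contains the positive term $\alpha(1+\alpha)d^{\alpha-1}|\nabla d|^2$, so subtracting it produces a supersolution, which bounds $-\psi$ from above rather than below.
\end{itemize}
Take instead $h=d+\epsilon d^{1+\alpha}$, with a regularized distance satisfying $|\nabla d|\ge c$ and $|\nabla^2 d|\lesssim d^{\alpha-1}$ near $\Sigma_f$. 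Choose $\delta$ small, then $\epsilon$ large; then $h$ is subharmonic in $\{0<d<\delta\}$. Compare $-\psi$ there with $c_1h/(\delta+\epsilon\delta^{1+\alpha})$, where $c_1$ is your Step 2 lower bound on $\{d=\delta\}$, using the slab-type maximum principle (both functions are bounded). This gives $-\psi\gtrsim d$, hence $\partial_N\psi\ge c_0$. Flattening plus boundary Schauder estimates alone only yields upper bounds on the gradient, so it cannot replace this barrier step.
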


Now we reformulate the one-phase Muskat problem with surface tension. See Nguyen \cite[Appendix A]{N20} for the proof.
\begin{proposition}\label{prop:one-phase-Muskat-DN}
If $(u,p,f)$ solve the one-phase Muskat problem, then $f:[0,\infty)\times\mathbb{R}^d\rightarrow\mathbb{R}$ obeys the equation
\begin{equation}\label{eq:DtoN-formulation-surface-tension}
\partial_t f =-\frac{\kappa}{\mu}G(f)(\mathfrak{s}H(f)+\rho \mathfrak{g}f).
\end{equation}
Conversely, if $f$ is a solution of \eqref{eq:DtoN-formulation-surface-tension}, then the one-phase Muskat problem has a solution which admits $f$ as the free surface.
\end{proposition}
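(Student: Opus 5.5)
The proposition is cited from Nguyen \cite[Appendix A]{N20}; I would prove it by rewriting Darcy's law in terms of a harmonic potential. For the forward direction, let $(u,p,f)$ solve the one-phase problem, and set
\[ \phi := -\frac{\kappa}{\mu}\bigl(p+\rho\mathfrak{g}y\bigr) \quad\text{in } \Omega_f(t). \]
Then \eqref{eq:Darcy-law} reads $u=\nabla_{x,y}\phi$. Since $\Div_{x,y}u=0$, we get $\Delta_{x,y}\phi=0$ in $\Omega_f(t)$. On the boundary, \eqref{eq:p-boundary} gives
\[ \phi(x,f(x,t)) = -\frac{\kappa}{\mu}\bigl(\mathfrak{s}H(f)+\rho\mathfrak{g}f\bigr). \]
For the velocity, the natural finite-energy class is $u\in\Leb{2}(\Omega_f)$, which amounts to $\nabla_{x,y}\phi\in\Leb{2}$. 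Hence $\phi$ is the solution of \eqref{eq:DN-elliptic} with boundary datum $g=-\frac{\kappa}{\mu}(\mathfrak{s}H(f)+\rho\mathfrak{g}f)$.

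Uniqueness of this variational solution, for $f\in\Sob{1}{\infty}$ and $g\in\thSob{1/2}$ (Proposition \ref{prop:existence-DN}), identifies $\phi$ with the function defining $G(f)g$. Next, the kinematic condition \eqref{eq:eta-kinematic} gives
\[ \partial_t f=\sqrt{1+|\nabla f|^2}\,u\cdot n = N\cdot\nabla_{x,y}\phi\big|_{y=f} = \partial_N\phi = G(f)g. \]
Linearity of $G(f)$ in $g$ then yields \eqref{eq:DtoN-formulation-surface-tension}.

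For the converse, take a solution $f$ of \eqref{eq:DtoN-formulation-surface-tension} and let $\phi$ solve \eqref{eq:DN-elliptic} with $g=-\frac{\kappa}{\mu}(\mathfrak{s}H(f)+\rho\mathfrak{g}f)$. Define
\[ u=\nabla_{x,y}\phi,\qquad p=-\frac{\mu}{\kappa}\phi-\rho\mathfrak{g}y. \]
Direct substitution verifies Darcy's law and $\Div u=0$. On $\Sigma_f$ we have $p=\mathfrak{s}H(f)$, and the equation for $f$ is exactly the kinematic condition by the identity above.

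The main obstacle is regularity and function-space bookkeeping. Two points need care. First, $g\in\thSob{1/2}$ requires $H(f)\in\thSob{1/2}$ and $f\in\thSob{1/2}$. For $f\in\tSob{s}$ with $s$ large enough this follows from Sobolev algebra estimates; at lower regularity, $G(f)g$ is defined in $\thSob{-1/2}$ and the identities hold in the weak sense. Second, the normal derivative must be justified as a trace. Here I would invoke the elliptic regularity implicit in Propositions \ref{prop:existence-DN}–\ref{prop:continuity-DN}, namely that $\nabla_{x,y}\phi$ has boundary traces. I would also check that the additive constant in $p$ is fixed by the boundary condition, so no ambiguity arises from the decay condition at depth.
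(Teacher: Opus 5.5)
Your argument is correct and is the standard derivation behind the paper's reference to \cite[Appendix A]{N20}. The velocity potential $\phi=-\frac{\kappa}{\mu}(p+\rho\mathfrak{g}y)$ is harmonic with Dirichlet data $-\frac{\kappa}{\mu}(\mathfrak{s}H(f)+\rho\mathfrak{g}f)$, and the kinematic condition is exactly $\partial_N\phi$. The condition $u\in\Leb{2}(\Omega_f)$, which you build into the notion of solution, is genuinely needed: without it one could add to $\phi$ a multiple of a harmonic function that vanishes on $\Sigma_f$ but has non-decaying gradient, such as the hydraulic pressure $Q$ of Section~\ref{subsec:pressure}. One small correction to your regularity remark: Proposition~\ref{prop:existence-DN} does not define $G(f)g$ for $g\notin\thSob{1/2}$, so it does not give a weak extension at lower regularity. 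This is harmless, because for solutions in the class $\Leb{2}(0,T;\tSob{s+3/2})$ one has $H(f(t))\in\tSob{s-1/2}\subset\tSob{1/2}$ for a.e.\ $t$, so the identities hold a.e.\ in time with no extension needed.
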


\subsection{Littlewood-Paley projections}\label{sec:LP}
For $m :\mathbb{R}^d\rightarrow\mathbb{C}$, we define the Fourier multiplier operator $m(|\nabla|)$ via the Fourier transform: for any Schwarz function $f$, we define
\[ m(|\nabla|)f(x)=\frac{1}{(2\pi)^{d/2}} \int_{\mathbb{R}^d} e^{ix\cdot \xi} m(\xi)\hat{f}(\xi)d\xi.\]

Let $\phi:\mathbb{R}^d\rightarrow [0,1]$ be a smooth bump function whose support is in $B_{8/5}$ satisfying $\phi=1$ on $B_{5/4}$. Define $\psi(\xi)=\phi(\xi)-\phi(2\xi)$. Then we have 
\[ \phi(\xi)+\sum_{j\geq 0} \psi(2^{-j}\xi)=1\quad\text{for all } \xi \in \mathbb{R}^d,\]
and
\[ \sum_{j\in\mathbb{Z}} \psi(2^{-j}\xi)=1\quad \text{for all } \xi \in \mathbb{R}^d\setminus\{0\}.\]

We define Littlewood-Paley frequency projection operators as the Fourier multiplier operators:
\[ P_{\leq k} f = \phi\left(\frac{|\nabla|}{2^k}\right)f\quad \text{and}\quad P_k f =\psi\left(\frac{|\nabla|}{2^k}\right)f.\]
Note that 
\begin{equation}\label{eq:almost-orthogonality-LP}
P_k P_l f=0\quad \text{if } |k-l|\geq 2.
\end{equation}

We recall Bernstein inequalities (see e.g. \cite{BCD11}).
\begin{proposition}\label{prop:properties-LP}
For $1\leq p\leq q\leq \infty$, $s\in\mathbb{R}$, and $k\in\mathbb{Z}$, we have 
\begin{align*}
\norm{P_k f}{\Leb{q}}&\apprle_d 2^{dk(1/p-1/q)} \norm{P_k f}{\Leb{p}},\\
\norm{P_{\leq k} f}{\Leb{q}}&\apprle_{d} 2^{dk(1/p-1/q)} \norm{P_{\leq k} f}{\Leb{p}},\\
\norm{|\nabla|^s P_k f}{\Leb{p}}&\apprle_{s,d} 2^{sk}\norm{P_k f}{\Leb{p}}.
\end{align*}
Also, there exists a constant $c>0$ such that 
\[ \norm{e^{-t|\nabla|}P_k f}{\Leb{p}}\apprle_d e^{-ct2^k}\norm{P_k f}{\Leb{p}}\]
for any $p\in [1,\infty]$, $k\in\mathbb{Z}$, and $t>0$.
\end{proposition}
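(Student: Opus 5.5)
We prove the four estimates of Proposition \ref{prop:properties-LP} by writing each frequency-localized piece as a convolution with a rescaled Schwartz kernel and applying Young's inequality. The only substantial point is the last estimate, because the symbol $|\xi|$ is not smooth at the origin.

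\textbf{Bernstein inequalities.} Fix $\tilde\psi\in C_c^\infty(\mathbb{R}^d\setminus\{0\})$ with $\tilde\psi=1$ on $\supp\psi$, say supported in the annulus $\{1/4\le|\xi|\le 4\}$. Then $P_kf=\tilde\psi(2^{-k}\nabla)P_kf$, so
\[
P_kf=K_k*P_kf,\qquad K_k(x)=2^{kd}K(2^kx),
\]
where $K$ is the Schwartz inverse Fourier transform of $\tilde\psi$. Young's inequality with $1+1/q=1/r+1/p$ gives
\[
\norm{P_kf}{\Leb q}\le\norm{K_k}{\Leb r}\norm{P_kf}{\Leb p}.
\]
By scaling, $\norm{K_k}{\Leb r}=2^{kd(1-1/r)}\norm{K}{\Leb r}=2^{dk(1/p-1/q)}\norm{K}{\Leb r}$. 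For $P_{\le k}$ the same argument works with $\tilde\phi\in C_c^\infty$ equal to $1$ on $B_{8/5}$; here the origin causes no issue.

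For the derivative bound, write $|\nabla|^sP_kf=2^{sk}L_k*P_kf$, where $L_k(x)=2^{kd}L(2^kx)$ and $L$ is the inverse Fourier transform of $|\xi|^s\tilde\psi(\xi)$. This function is smooth and compactly supported away from the origin, for every $s\in\mathbb{R}$. Hence $L\in\Leb1$, with norm depending on $s$ and $d$, and Young's inequality with exponent $1$ gives $\norm{|\nabla|^sP_kf}{\Leb p}\lesssim_{s,d}2^{sk}\norm{P_kf}{\Leb p}$.

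\textbf{Heat-type decay.} Write $e^{-t|\nabla|}P_kf=M*P_kf$, where $M$ is the inverse Fourier transform of $m(\xi)=e^{-t|\xi|}\tilde\psi(2^{-k}\xi)$. Rescaling $\xi=2^k\eta$ and setting $\tau=t2^k$ gives $\norm{M}{\Leb1}=\norm{\check g_\tau}{\Leb1}$, where
\[
g_\tau(\eta)=e^{-\tau|\eta|}\tilde\psi(\eta).
\]
It therefore suffices to show $\norm{\check g_\tau}{\Leb1}\lesssim_d e^{-c\tau}$ uniformly in $\tau\ge0$. I expect this to be the main obstacle.

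We control $\check g_\tau$ through $\norm{\check g}{\Leb1}\lesssim\sum_{|\alpha|\le d+1}\norm{\partial^\alpha g}{\Leb1}$. This bound follows from $|\check g(x)|\lesssim(1+|x|)^{-d-1}\sum_{|\alpha|\le d+1}\norm{\partial^\alpha g}{\Leb1}$, which is obtained by integrating by parts. On $\supp\tilde\psi\subset\{1/4\le|\eta|\le4\}$ the function $|\eta|$ is smooth. Each derivative of $e^{-\tau|\eta|}$ is $e^{-\tau|\eta|}$ times a polynomial in $\tau$ of degree at most $|\alpha|$ with bounded coefficients. Hence
\[
|\partial^\alpha g_\tau|\lesssim(1+\tau)^{d+1}e^{-\tau/4}\lesssim e^{-\tau/8}.
\]
This yields the claim with $c=1/8$. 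Young's inequality then gives $\norm{e^{-t|\nabla|}P_kf}{\Leb p}\lesssim_d e^{-ct2^k}\norm{P_kf}{\Leb p}$ for all $p\in[1,\infty]$, $k\in\mathbb{Z}$ and $t>0$. The restriction to an annulus is essential: it is exactly what keeps the symbol away from the singularity of $|\xi|$ at $0$.
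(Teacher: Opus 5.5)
Your proof is correct and is essentially the standard argument from \cite{BCD11}, which the paper cites instead of giving a proof. That argument writes each localized operator as convolution with a rescaled Schwartz kernel, applies Young's inequality, and uses the annular cutoff so that $e^{-\tau|\eta|}$ is smooth on the support, which gives the $e^{-c\tau}$ bound on the kernel's $\Leb{1}$ norm. One detail is worth stating explicitly: $\norm{K}{\Leb{r}}\le \max\{\norm{K}{\Leb{1}},\norm{K}{\Leb{\infty}}\}$, so the Bernstein constants are uniform in $p,q$ and depend only on $d$ and the fixed bump function.
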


Let us introduce inhomogeneous Besov spaces. For $s\in\mathbb{R}$, $p\in (1,\infty)$, and $q\in [1,\infty]$, let 
\[ \norm{f}{\bes{s}{p,q}}=\left(\norm{P_{\leq 0} f}{\Leb{p}}^q+\sum_{j\geq 0} 2^{sjq} \norm{P_j f}{\Leb{p}}^q\right)^{1/q}\]
if $q<\infty$ and 
\[ \norm{f}{\bes{s}{p,\infty}}=\norm{P_{\leq 0} f}{\Leb{p}}+\sup_{j\geq 0} 2^{sj} \norm{P_j f}{\Leb{p}} \]
if $q=\infty$. The inhomogeneous Besov space $\bes{s}{p,r}$ is the space of functions satisfying $\norm{f}{\bes{s}{p,r}}<\infty$. Similarly, one can define homogeneous Besov spaces: we define
\[ \norm{f}{\hbes{s}{p,q}}=\left(\sum_{j} 2^{sjq} \norm{P_j f}{\Leb{p}}^q\right)^{1/q}\]
if $q \in [1,\infty)$ and 
\[ \norm{f}{\hbes{s}{p,\infty}}=\sup_{j\in\mathbb{Z}} 2^{sj} \norm{P_j f}{\Leb{p}} \]
if $q=\infty$. 

Let us recall several properties of Besov spaces. See e.g. \cite{BCD11} for the proof.
\begin{proposition}\label{prop:Besov}
Let $s\in\mathbb{R}$, $1\leq p,q\leq \infty$.
\begin{enumerate}[label=\textnormal{(\roman*)}]
\item If $1\leq q_1\leq q_2\leq \infty$, then $\bes{s}{p,q_1}\hookrightarrow \bes{s}{p,q_2}$;
\item If $s_1>s_2$, then $\bes{s_1}{p,q}\hookrightarrow \bes{s_2}{p,q}$;
\item $\hbes{d/p}{p,1}\hookrightarrow\Leb{\infty}$;
\item $\bes{s}{2,2}=\tSob{s}$ and $\hbes{s}{2,2}=\thSob{s}$ for $s>0$.
\end{enumerate}
\end{proposition}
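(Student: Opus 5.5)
The statement to prove is Proposition \ref{prop:Besov}: the standard embeddings of Besov spaces. Each part follows directly from the Littlewood--Paley definitions and Proposition \ref{prop:properties-LP}. I will treat the four items in turn; none requires more than elementary sequence-space arguments.

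For (i), the Besov norm is the $\ell^q$ norm of the sequence $a=(\norm{P_{\leq 0}f}{\Leb{p}},\,2^{sj}\norm{P_jf}{\Leb{p}})_{j\geq 0}$. The embedding $\ell^{q_1}\hookrightarrow\ell^{q_2}$ holds with constant one for $q_1\leq q_2$. Concretely, normalize $\norm{a}{\ell^{q_1}}=1$; then every entry is at most $1$, so $\sum|a_j|^{q_2}\leq\sum|a_j|^{q_1}=1$.

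For (ii), write $2^{s_2j}\norm{P_jf}{\Leb{p}}=2^{-(s_1-s_2)j}\cdot 2^{s_1j}\norm{P_jf}{\Leb{p}}$. Since $j\geq 0$, the factor $2^{-(s_1-s_2)j}$ is at most $1$, and the low-frequency term is identical in both norms. Hence the $\ell^q$ norm can only decrease, which gives the embedding.

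For (iii), take $f\in\hbes{d/p}{p,1}$ and write $f=\sum_{j\in\mathbb{Z}}P_jf$. This identity holds modulo polynomials; under the standard realization of homogeneous Besov spaces as tempered distributions whose low frequencies vanish, the series converges. Apply the first Bernstein inequality with $q=\infty$:
\[
\norm{P_jf}{\Leb{\infty}}\apprle 2^{dj/p}\norm{P_jf}{\Leb{p}}.
\]
Summing over $j$ gives $\norm{f}{\Leb{\infty}}\apprle\norm{f}{\hbes{d/p}{p,1}}$. The one point needing care, and the only step I consider genuinely delicate, is the convergence of the low-frequency part of $\sum_j P_jf$. Summability of the Bernstein bounds shows the series converges absolutely in $\Leb{\infty}$. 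Its limit agrees with $f$ because $f-\sum_j P_jf$ has Fourier support at the origin, hence is a polynomial, and the realization convention removes it.

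For (iv), use Plancherel. We have $\norm{P_jf}{\Leb{2}}^2=\int\psi(2^{-j}\xi)^2|\hat f(\xi)|^2\,d\xi$. Since $\psi(2^{-j}\cdot)$ is supported where $|\xi|\sim 2^j$, there we have $2^{2sj}\sim(1+|\xi|^2)^s$ for $j\geq0$, and $2^{2sj}\sim|\xi|^{2s}$ for every $j\in\mathbb{Z}$ in the homogeneous case. Moreover, at most two or three of the $\psi(2^{-j}\xi)$ are nonzero at any $\xi$, and they sum to $1$. Therefore
\[
c\leq \phi^2+\sum_{j\geq0}\psi(2^{-j}\xi)^2\leq 1.
\]
Multiplying by the weight and integrating yields the norm equivalence with $\tSob{s}$ (respectively $\thSob{s}$). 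For $s>0$, the homogeneous realization causes no issue at low frequency.
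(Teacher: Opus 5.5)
Your argument is correct and is the standard textbook proof: the $\ell^{q_1}\subset\ell^{q_2}$ inclusion, summing Bernstein bounds, and Plancherel with bounded overlap of the dyadic multipliers. The paper gives no proof of its own and simply defers to \cite{BCD11}, where these facts are established in exactly this way.

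The only slips are cosmetic. With the paper's normalization $\psi(\xi)=\phi(\xi)-\phi(2\xi)$ one actually has $\phi+\sum_{j\geq 0}\psi(2^{-j}\cdot)=1+\psi$, so your upper bound should be $2$ rather than $1$. Also, the low-frequency realization subtlety for $\thSob{s}$ concerns $s\geq d/2$ and is not settled by $s>0$; this does not affect the argument once a common realization is fixed for both sides, as you did in (iii).
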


We recall the following product rules in Besov spaces and Moser-type estimates for controlling nonlinear terms (see, e.g., \cite{BCD11}).
\begin{theorem}\label{thm:Besov-product}
For $(p,q)\in [1,\infty]^2$ and $s>0$, we have 
\[
\norm{u_1u_2}{\hbes{s}{p,q}}\apprle \norm{u_1}{\Leb{\infty}}\norm{u_2}{\hbes{s}{p,q}}+\norm{u_2}{\Leb{\infty}}\norm{u_1}{\hbes{s}{p,q}}.\]
A similar estimate holds for inhomogeneous Besov spaces.
\end{theorem}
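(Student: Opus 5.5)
The plan is to use Bony's paraproduct decomposition together with the Littlewood--Paley characterization of $\hbes{s}{p,q}$. Write $S_{j}u=\sum_{k\leq j}P_k u$ (in the homogeneous setting, for $u$ such that the series converges in $\mathcal{S}'$; we may first assume $u_1,u_2$ are Schwartz functions and conclude by density or a limiting argument). We split
\[
u_1u_2 = T_{u_1}u_2 + T_{u_2}u_1 + \mathcal{R}(u_1,u_2),\qquad T_{a}b=\sum_{j}S_{j-2}a\,P_jb,\quad \mathcal{R}(a,b)=\sum_{|j-j'|\leq 1}P_ja\,P_{j'}b.
\]
The estimate then follows from a separate bound for each of the three pieces, each by $\norm{u_1}{\Leb{\infty}}\norm{u_2}{\hbes{s}{p,q}}$ or $\norm{u_2}{\Leb{\infty}}\norm{u_1}{\hbes{s}{p,q}}$.

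For the paraproducts, each term $S_{j-2}u_1\,P_ju_2$ has Fourier support in an annulus $\{|\xi|\sim 2^j\}$. Hence $P_k(T_{u_1}u_2)$ only involves the indices $|j-k|\leq 3$, by the almost orthogonality \eqref{eq:almost-orthogonality-LP} extended to these annuli. Since $\norm{S_{j-2}u_1}{\Leb{\infty}}\apprle\norm{u_1}{\Leb{\infty}}$ (the kernels are uniformly bounded in $\Leb{1}$), Hölder's inequality gives
\[
2^{sk}\norm{P_k T_{u_1}u_2}{\Leb{p}}\apprle \norm{u_1}{\Leb{\infty}}\sum_{|j-k|\leq 3}2^{sj}\norm{P_ju_2}{\Leb{p}}.
\]
Taking the $\ell^q$ norm in $k$ yields $\norm{T_{u_1}u_2}{\hbes{s}{p,q}}\apprle\norm{u_1}{\Leb{\infty}}\norm{u_2}{\hbes{s}{p,q}}$. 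This step requires no sign condition on $s$. The term $T_{u_2}u_1$ is handled symmetrically.

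The remainder is the main obstacle, and it is exactly where $s>0$ enters. Each term $P_ju_1\,P_{j'}u_2$ with $|j-j'|\leq1$ has Fourier support only in a ball $\{|\xi|\lesssim 2^{j}\}$, not an annulus. Thus $P_k\mathcal{R}$ receives contributions from all $j\geq k-3$. Using $\norm{P_ju_1}{\Leb{\infty}}\apprle\norm{u_1}{\Leb{\infty}}$ we get
\[
2^{sk}\norm{P_k\mathcal{R}(u_1,u_2)}{\Leb{p}}\apprle \norm{u_1}{\Leb{\infty}}\sum_{j\geq k-3}2^{-s(j-k)}\,2^{sj}\norm{\tilde P_ju_2}{\Leb{p}},\qquad \tilde P_j=P_{j-1}+P_j+P_{j+1}.
\]
The right-hand side is a discrete convolution of the sequence $(2^{sj}\norm{\tilde P_ju_2}{\Leb{p}})_j$ with the kernel $(2^{-sm}\mathbf{1}_{m\geq -3})_m$. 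This kernel lies in $\ell^1$ precisely because $s>0$. Young's inequality in $\ell^q$ therefore bounds the remainder by $\norm{u_1}{\Leb{\infty}}\norm{u_2}{\hbes{s}{p,q}}$, and summing the three bounds gives the claim.

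For the inhomogeneous version, we repeat the argument with the blocks $P_{\leq 0}$ and $P_j$, $j\geq0$. The only new piece is the low-frequency block, which is controlled by
\[
\norm{P_{\leq0}(u_1u_2)}{\Leb{p}}\apprle\norm{u_1}{\Leb{\infty}}\norm{u_2}{\Leb{p}}.
\]
Here $\norm{u_2}{\Leb{p}}\apprle\norm{u_2}{\bes{s}{p,q}}$ for $s>0$, by summing the dyadic blocks.
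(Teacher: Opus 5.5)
Your proposal is correct and is essentially the standard argument: the paper gives no proof of its own and simply cites \cite{BCD11}, where the estimate is obtained by the same Bony decomposition you use. There, as in your proposal, the paraproducts are bounded using only $\|S_j u\|_{L^\infty}\lesssim \|u\|_{L^\infty}$, and the remainder by an $\ell^1$ discrete convolution whose summability is exactly where $s>0$ enters. Your choice of $S_{j-2}$ rather than $S_{j-1}$ is the right one for the paper's bump function supported in $B_{8/5}$, and your treatment of the low-frequency block in the inhomogeneous case is fine.
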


\begin{theorem}\label{thm:Moser-estimates}
Let $F\in C^\infty$, $s>0$, and $(p,q)\in [1,\infty]^2$. 
\begin{enumerate}[label=\textnormal{(\roman*)}]
\item Suppose that $F(0)=0$. If $u\in \hbes{s}{p,q}\cap \Leb{\infty}$, then $F(u)\in \hbes{s}{p,q}\cap\Leb{\infty}$ and 
\[ \norm{F(u)}{\hbes{s}{p,q}}\leq \mathcal{F}(\norm{u}{\Leb{\infty}})\norm{u}{\hbes{s}{p,q}},\]
where $\mathcal{F}$ depends on $s$ and $F'$.
\item Suppose that $F'(0)=0$. If $u,v\in \hbes{s}{p,q}\cap \Leb{\infty}$, then $F(u)-F(v)$ belongs to the same space and 
\begin{align*}
&\norm{F(u)-F(v)}{\hbes{s}{p,q}}\\
\leq &\mathcal{F}(\norm{u}{\Leb{\infty}},\norm{v}{\Leb{\infty}})\left(\norm{u-v}{\hbes{s}{p,q}}\sup_{\tau \in[0,1]} \norm{u+\tau(v-u)}{\Leb{\infty}}+\norm{u-v}{\Leb{\infty}} \sup_{\tau \in [0,1]} \norm{u+\tau(v-u)}{\hbes{s}{p,q}} \right),
\end{align*}
where $\mathcal{F}$ depends on $F''$, $s$, $p$, and $q$.
\end{enumerate}
\end{theorem}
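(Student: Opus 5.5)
The statement is Theorem \ref{thm:Moser-estimates}, the classical Moser-type composition estimates in homogeneous Besov spaces. I would prove both parts with Littlewood--Paley decompositions in the spirit of \cite{BCD11}.

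\textbf{Part (i).} I would use Meyer's telescoping argument. Write $u_k=P_{\leq k}u$. Since $F(0)=0$ and $u_k\to 0$ as $k\to-\infty$ in the relevant sense, we have
\[
F(u)=\sum_{k\in\mathbb{Z}} \bigl(F(u_{k+1})-F(u_k)\bigr)=\sum_k m_k\, P_{k+1}u,\qquad m_k=\int_0^1 F'(u_k+\tau P_{k+1}u)\,d\tau .
\]
Here I use $u_{k+1}-u_k=P_{k+1}u$, up to the bookkeeping of the cutoffs. Each $u_k$ is bounded by $C\norm{u}{\Leb{\infty}}$ uniformly in $k$. Derivatives satisfy $\norm{\nabla^\alpha u_k}{\Leb{\infty}}\apprle 2^{k|\alpha|}\norm{u}{\Leb{\infty}}$ by Bernstein (Proposition \ref{prop:properties-LP}).

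Applying the Fa\`a di Bruno formula to $m_k$ gives
\[
\norm{\nabla^\alpha m_k}{\Leb{\infty}}\leq \mathcal{F}(\norm{u}{\Leb{\infty}})\,2^{k|\alpha|}.
\]
Hence each summand $a_k=m_kP_{k+1}u$ obeys
\[
\norm{\nabla^\alpha a_k}{\Leb{p}}\apprle \mathcal{F}\,2^{k|\alpha|}\norm{P_{k+1}u}{\Leb{p}}.
\]

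Now fix $j$ and split $P_jF(u)=\sum_{k\geq j}P_ja_k+\sum_{k<j}P_ja_k$.
\begin{itemize}
\item For $k\geq j$, I bound $\norm{P_ja_k}{\Leb{p}}\apprle \mathcal{F}\norm{P_{k+1}u}{\Leb{p}}$. Multiplying by $2^{sj}$ gives $2^{-s(k-j)}\cdot 2^{sk}\norm{P_{k+1}u}{\Leb{p}}$, a convolution with an $\ell^1$ kernel because $s>0$.
\item For $k<j$, I pick an integer $N>s$ and write $P_j=2^{-jN}\tilde P_j\nabla^N$. This gives $\norm{P_ja_k}{\Leb{p}}\apprle 2^{-(j-k)N}\mathcal{F}\norm{P_{k+1}u}{\Leb{p}}$, so $2^{sj}$ times it is $2^{-(N-s)(j-k)}\cdot 2^{sk}\norm{P_{k+1}u}{\Leb{p}}$, again an $\ell^1$ kernel.
\end{itemize}
Young's inequality in $\ell^q$ then yields the claimed bound.

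The $\Leb{\infty}$ statement is trivial: $|F(u)|\leq \sup_{|z|\leq\norm{u}{\Leb{\infty}}}|F'(z)|\,|u|$.

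\textbf{Part (ii).} I would write
\[
F(u)-F(v)=(u-v)\int_0^1 F'\bigl(v+\tau(u-v)\bigr)\,d\tau
\]
and apply the product rule, Theorem \ref{thm:Besov-product}. The $\Leb{\infty}$ factor is controlled because $F'(0)=0$ gives $|F'(w)|\leq \mathcal{F}\,|w|$ for $w$ bounded, which produces the term $\norm{u-v}{\hbes{s}{p,q}}\sup_\tau\norm{w_\tau}{\Leb{\infty}}$. For the Besov factor, I apply part (i) to $G=F'$, which is legitimate since $G(0)=0$. Minkowski's inequality in $\tau$ then gives $\norm{\int_0^1F'(w_\tau)\,d\tau}{\hbes{s}{p,q}}\leq \mathcal{F}\sup_\tau\norm{w_\tau}{\hbes{s}{p,q}}$. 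Combining the two products gives exactly the stated estimate, with $\mathcal{F}$ depending on $F''$.

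\textbf{Main obstacle.} The delicate step is the convergence and justification of the telescoping sum in the homogeneous setting. One has to check that $F(u_k)\to 0$ as $k\to-\infty$ in $\mathcal{S}'$ modulo polynomials, and that $u\in\hbes{s}{p,q}\cap\Leb{\infty}$ has low frequencies tending to zero. I would first prove the estimate for $u$ Schwartz, or with compactly supported Fourier transform away from zero, and then pass to general $u$ by density together with the uniform $\Leb{\infty}$ control.
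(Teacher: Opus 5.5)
Your proposal is correct and is essentially the standard argument of \cite{BCD11}, which the paper simply cites without proving: Meyer's telescoping linearization $F(u)=\sum_k m_k P_{k+1}u$ with the high/low frequency splitting for (i), and the Taylor formula $F(u)-F(v)=(u-v)\int_0^1F'(\cdot)\,d\tau$ combined with Theorem \ref{thm:Besov-product} and part (i) applied to $F'$ for (ii). The one step to tighten is the final limiting argument: when $p=\infty$ or $q=\infty$ smooth functions are not dense in $\hbes{s}{p,q}$, so approximate by $\sum_{|j|\le N}P_ju$ (uniformly bounded in $\hbes{s}{p,q}\cap\Leb{\infty}$) and conclude with the Fatou property of the Besov norm, rather than by norm density.
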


\subsection{Chemin-Lerner type spaces}

In this subsection, we introduce Chemin-Lerner type spaces. Such spaces were first introduced by Chemin and Lerner \cite{CL95} to obtain regularity results for the Navier-Stokes equations with initial data $\tSob{d/2-1}(\mathbb{R}^d)$.

For our purpose, we introduce additional variables to define Chemin-Lerner type spaces. For $I,J\subset \mathbb{R}$, $s\in\mathbb{R}$, and $1\leq p,q,\rho_1,\rho_2 \leq \infty$, we define 
\[ \norm{u}{\tLeb{\rho_1}^z\tLeb{\rho_2}^t\bes{s}{p,q}(I\times J\times\mathbb{R}^d)} = \norm{P_{\leq 0} u}{\Leb{\rho_2}^z\Leb{\rho_1}^t(I\times J;\Leb{p})}+\left(\sum_{j=0}^\infty 2^{jsq}\norm{P_j u}{\Leb{\rho_1}^z\Leb{\rho_2}^t(I\times J;\Leb{p}^x)}^q\right)^{1/q} \]
if $r<\infty$. One can define the space when $q=\infty$ via a standard modification. Similarly, one can define its homogeneous version by 
\[ \norm{u}{\tLeb{\rho_1}^z\tLeb{\rho_2}^t\hbes{s}{p,q}(I\times J\times\mathbb{R}^d)} =\left(\sum_{j=-\infty}^\infty 2^{jsr}\norm{P_j u}{\Leb{\rho_1}^z\Leb{\rho_2}^t(I\times J;\Leb{p}^x)}^q\right)^{1/q} \]
with the obvious modification when $r=\infty$. By Minkowski's inequality, we have
\begin{align*}
\norm{u}{\tLeb{\rho_1}^z\tLeb{\rho_2}^t(I\times J;\bes{s}{p,q})}&\leq  \norm{u}{\Leb{\rho_1}^z\tLeb{\rho_2}^t(I;\bes{s}{p,q})}\quad \text{if } q\geq \rho_1,\\
\norm{u}{\tLeb{\rho_1}^z\tLeb{\rho_2}^t(I\times J;\bes{s}{p,q})}&\leq  \norm{u}{\Leb{\rho_1}^z\Leb{\rho_2}^t(I;\bes{s}{p,q})}\quad \text{if } q\geq \max\{\rho_1,\rho_2\},\\
\norm{u}{\tLeb{\rho_1}^z\tLeb{\rho_2}^t(I\times J;\bes{s}{p,q})}&\geq  \norm{u}{\Leb{\rho_1}^z\tLeb{\rho_2}^t(I;\bes{s}{p,q})}\quad \text{if } q\leq  \rho_1,\\
\norm{u}{\tLeb{\rho_1}^z\tLeb{\rho_2}^t(I\times J;\bes{s}{p,q})}&\geq  \norm{u}{\Leb{\rho_1}^z\Leb{\rho_2}^t(I;\bes{s}{p,q})}\quad \text{if } q\leq  \min\{\rho_1,\rho_2\}.
\end{align*}
We frequently write $\tLeb{\rho_1}^z\tLeb{\rho_2}^t\bes{s}{p,r}$ instead of $\tLeb{\rho_1}^z\tLeb{\rho_2}^t(I\times J;\bes{s}{p,r})$. Moreover, it is compatible with the real interpolation method.  See Theorem \ref{thm:CL-interpolation}. We also note that many properties of Besov spaces, such as Theorems \ref{thm:Besov-product} and \ref{thm:Moser-estimates}, remain true for Chemin-Lerner type spaces, see \cite[Sections 2.6]{BCD11} for the discussions.

\section{Lyapunov functional for the one-phase Muskat problem}\label{sec:Lyapunov}

In this section, we prove that $\Leb{2}$-norm is a Lyapunov function for the one-phase Muskat problem with surface tension. Without surface tension, one can easily show that if $f$ is a strong solution of \eqref{eq:one-phase-reformulation}, then $\Leb{2}$-norm is a Lyapunov function for the problem. Moreover, Alazard and Nguyen \cite[Proposition 2.2]{A25} proved the following coercive estimate
\begin{equation}\label{eq:trace-gravity}
\int_{\mathbb{R}^d} fG(f)f  \myd{x} \geq \frac{c}{1+\norm{\nabla f}{\BMO}}\norm{f}{\thSob{1/2}}^2
\end{equation}
for some constant $c>0$ (see also Nguyen \cite{N23}). Here $\norm{g}{\BMO}$ denotes the seminorm of bounded mean oscillation. However, if we take surface tension effect into account, then it is unclear whether $\Leb{2}$-norm is a Lyapunov functional for the problem, which is crucial for establishing global well-posedness of the one-phase Muskat problem with surface tension.

Note first that the paring $\action{H(f),G(f)f}$ is well defined for all $f\in \tSob{s}$, $s>d/2+1$ and satisfies
\[ \left|\action{H(f),G(f)f} \right|\leq \mathcal{F}(\norm{f}{\tSob{s}})\norm{f}{\tSob{s}}\norm{G(f)f}{\tSob{s-1}}\]
for all $f\in \tSob{s}$, $s>d/2+1$. Indeed, if $f\in \tSob{s}$, then $H(f)\in \tSob{s-2}$ and 
\begin{equation*}
 \norm{H(f)}{\tSob{s-2}}\leq \mathcal{F}(\norm{f}{\tSob{s}})\norm{f}{\tSob{s}}.
\end{equation*}
Since $\tSob{s-2}\hookrightarrow \tSob{1-s}$ when $s\geq 3/2$, we have $\norm{H(f)}{\tSob{1-s}}\leq \norm{H(f)}{\tSob{s-2}}$. Hence, the pairing is well defined. 

Now we present the main result of this section. For simplicity, we normalize all physical parameters $\rho$, $\kappa$, $\mathfrak{s}$, $\mathfrak{g}$ to be 1.

\begin{theorem}\label{thm:Lyapunov}
Let $s>d/2+1$. Then 
\begin{equation*}\label{eq:DN-mean-curvatures}
 \action{H(f),G(f)f}\geq 0
\end{equation*}
for any $f\in \tSob{s}(\mathbb{R}^d)$. Moreover, if $f$ satisfies \eqref{eq:one-phase-reformulation}, then 
\begin{equation}\label{eq:Lyapunov-estimates}
\frac{1}{2}\frac{d}{dt}\norm{f}{\Leb{2}}^2+\frac{1}{\mathcal{F}(\norm{f}{\tSob{s}})}(\norm{f}{\thSob{1/2}}^2+\norm{f}{\thSob{3/2}}^2)\leq 0
\end{equation}
for some nondecreasing function $\mathcal{F}:[0,\infty)\rightarrow(0,\infty)$.
\end{theorem}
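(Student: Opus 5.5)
The plan is to establish the sign of $\action{H(f),G(f)f}$ through an exact integral identity in the fluid domain, following the hidden structure observed by Alazard and Bresch \cite{AB24} in the periodic setting. Then I insert it into the energy identity obtained by testing \eqref{eq:one-phase-reformulation} with $f$. First I would reduce to $f\in C_c^\infty(\mathbb{R}^d)$. Both sides of the inequality are continuous in $f$ with respect to the $\tSob{s}$ topology, $s>d/2+1$, by the pairing bound stated before the theorem together with the contraction estimate of Proposition \ref{prop:continuity-DN}. So density of $C_c^\infty$ in $\tSob{s}$ suffices for the first claim.

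For smooth compactly supported $f$, let $\phi$ be the harmonic extension of $f$ to $\Omega_f$ as in \eqref{eq:DN-elliptic}, so that $G(f)f=\partial_N\phi$. Introduce the hydraulic head $Q=\phi-y$. Since $Q=0$ on $\Sigma_f$ and, by Proposition \ref{prop:parabolicity-DN}, $\partial_y Q=\partial_y\phi-1<-c_0$ near the boundary, the boundary is a regular level set of $Q$. Hence $n=-\nabla_{x,y}Q/|\nabla_{x,y}Q|$ there, and the mean curvature can be written as $H(f)=\Div_{x,y}\bigl(\nabla_{x,y}Q/|\nabla_{x,y}Q|\bigr)$ restricted to $\Sigma_f$. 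The pairing becomes $\int_{\Sigma_f} H\,(\partial_n Q+n_{y})\,d\mathcal{H}^d$. The term involving $n_y$ integrates to zero, since it equals $\int H\,dx=0$ for a divergence. I would then apply a Reilly/Bochner-type computation: use $\Delta_{x,y}Q=0$ and the pointwise identity
\[
\tfrac12\Delta|\nabla Q|^2=|\nabla^2Q|^2
\]
on the truncated domain $\Omega_f\cap\{y>-L,|x|<L\}$. Integrating by parts, the boundary term on $\Sigma_f$ reproduces $\int H|\nabla Q|\partial_nQ$-type quantities. The aim is an identity of the form \eqref{eq:identity-Hessian}, expressing $\action{H(f),G(f)f}$ as a nonnegative integral of $|\nabla^2_{x,y}\phi|^2$ with a positive weight built from $|\nabla Q|$, plus terms on the artificial boundary.

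\textbf{The main obstacle.} The main obstacle is the limit $L\to\infty$ on the unbounded domain. Here I would use interior and boundary regularity for harmonic functions in the lower half-space region. Below $\min f$, the function $\phi$ is a Poisson extension, so $\nabla\phi=O(|(x,y)|^{-d})$ and $\nabla^2\phi=O(|(x,y)|^{-d-1})$; these come from the $\Leb{2}$ gradient bound plus mean value estimates. Note that $\nabla Q=\nabla\phi-e_{d+1}$ is bounded, and $|\nabla Q|\to1$ at infinity. With these decay rates, the flux terms on the sides and bottom of the truncated domain vanish as $L\to\infty$. I would need to check this carefully, especially in $d=1$, where the decay is weakest.

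For \eqref{eq:Lyapunov-estimates}, I would test \eqref{eq:one-phase-reformulation} with $f$ and use the self-adjointness of $G(f)$:
\[
\frac12\frac{d}{dt}\norm{f}{\Leb{2}}^2=-\action{f,G(f)f}-\action{H(f),G(f)f}.
\]
The first term is bounded below by $\norm{f}{\thSob{1/2}}^2/\mathcal{F}(\norm{f}{\tSob{s}})$ by \eqref{eq:trace-gravity}, since $\norm{\nabla f}{\BMO}\lesssim\norm{f}{\tSob{s}}$. For the second, I would use the weighted Hessian identity: the weight is bounded below by $1/\mathcal{F}(\norm{f}{\Sob{1}{\infty}})$ thanks to the bounds on $|\nabla Q|$. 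This gives $\action{H(f),G(f)f}\gtrsim\mathcal{F}^{-1}\int_{\Omega_f}|\nabla^2_{x,y}\phi|^2$. Finally, I would bound $\int_{\Omega_f}|\nabla^2\phi|^2$ from below by $\norm{\nabla f}{\thSob{1/2}}^2/\mathcal{F}$. To do so, I would apply the trace-type coercivity \eqref{eq:trace-gravity} to derivatives of $\phi$, flattening the domain with a Lipschitz change of variables so that $\nabla\phi|_{\Sigma_f}$ relates to $\nabla f$ modulo $\mathcal{F}(\norm{f}{\tSob{s}})$ factors.
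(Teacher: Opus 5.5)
Your outline has the same architecture as the paper: reduce to $C_c^\infty$, pass to the hydraulic head $Q=\phi-y$, prove a divergence-theorem identity on truncated domains, then test the equation with $f$ and use \eqref{eq:trace-gravity}. However, the central computation you propose produces a different boundary integral from the pairing you need.

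\textbf{The Bochner identity gives the wrong weight.} Write $\nabla$ for $\nabla_{x,y}$. Harmonicity of $Q$ and $n=-\nabla Q/|\nabla Q|$ give $\partial_n|\nabla Q|=H(f)\,\partial_nQ=-H(f)|\nabla Q|$ on $\Sigma_f$. Here $H(f)=\Div n=-\Div(\nabla Q/|\nabla Q|)$; your formula has the opposite sign, which matters in a statement whose whole content is a sign.

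Integrating $\tfrac12\Delta|\nabla Q|^2=|\nabla^2Q|^2$ therefore yields $\int_{\Omega_f}|\nabla^2Q|^2=\int_{\Sigma_f}|\nabla Q|\,\partial_n|\nabla Q|\,d\mathcal{H}^d=-\int_{\Sigma_f}H(f)|\nabla Q|^2\,d\mathcal{H}^d$. By contrast, $\langle H(f),G(f)f\rangle=\int_{\Sigma_f}H(f)\,\partial_nQ\,d\mathcal{H}^d=-\int_{\Sigma_f}H(f)|\nabla Q|\,d\mathcal{H}^d$. The two weights differ by the nonconstant factor $|\nabla Q|$, and neither integral controls the other. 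Reilly's formula has the same defect, since its boundary term is $H\,Q_n^2$.

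The missing idea is to apply the divergence theorem to $|\nabla Q|$ itself. This gives $J(f)=\int_{\Sigma_f}\partial_n|\nabla Q|\,d\mathcal{H}^d=\int_{\Omega_f}\Delta|\nabla Q|$, with $\Delta|\nabla Q|=(|\nabla Q|^2|\nabla^2Q|^2-|\nabla^2Q\,\nabla Q|^2)/|\nabla Q|^3$.
\begin{itemize}
\item This integrand is not a weighted $|\nabla^2\phi|^2$. For a general symmetric $A$, the quantity $|A|^2|v|^2-|Av|^2$ vanishes when $A=v\otimes v$.
\item The coercivity $\Delta|\nabla Q|\geq |\nabla^2Q|^2/((d+1)|\nabla Q|)$ genuinely uses that $\nabla^2Q$ is trace-free (Lemma \ref{lem:norm-estimate-linear-algebra}).
\item It also needs $|\nabla Q|$ bounded above and away from zero on all of $\overline{\Omega_f}$, not only near $\Sigma_f$. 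The paper obtains this from the minimum principle for the harmonic function $-\partial_yQ$ and the maximum principle for the subharmonic $|\nabla Q|^2$ (Proposition \ref{prop:estimates-on-pressure}).
\item The upper bound involves $\|G(f)f\|_{L^\infty}$, so your weight depends on $\|f\|_{H^s}$, not on $\|f\|_{W^{1,\infty}}$.
\end{itemize}

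\textbf{The decay at infinity is not justified.} The $L^2$ gradient bound with mean-value estimates on balls of radius comparable to $|X|$ gives only $|\nabla\phi|\lesssim|X|^{-(d+1)/2}$ and $|\nabla^2\phi|\lesssim|X|^{-(d+3)/2}$. Being a Poisson extension below $\min f$ does not improve this. With these bounds the flux through the bottom and lateral pieces of the truncated domain is only $O(L^{(d-3)/2})$, which does not tend to zero for large $d$. So the difficult case is high dimension, not $d=1$.

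The rates you state are true; in fact $|\nabla\phi|\lesssim|X|^{-(d+1)}$. Proving them requires a cancellation. The paper reflects $\phi$ oddly across $\{y=0\}$ outside $\operatorname{supp} f$ and expands $\nabla(\eta\phi)$ through the Newtonian potential. The monopole coefficient vanishes by oddness (Lemma \ref{lem:boundary-behavior-harmonic}). This gives $\nabla\phi\in L^p(\Omega_f)$ for $p$ near $1$, and hence boundary terms of order $L^{d-1-(d+1)/p}\to0$.

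Your remaining steps follow the paper: the density argument, the energy identity with \eqref{eq:trace-gravity}, and the trace argument turning $\int_{\Omega_f}|\nabla^2\phi|^2$ into a bound on $\|\nabla f\|_{\dot H^{1/2}}^2$. Your trace step is in fact more explicit than the paper's.
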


When the spatial domain is a periodic domain, Theorem \ref{thm:Lyapunov} was proved by Alazard and Bresch \cite{AB24}. In this section, we extend their result to the whole space $\mathbb{R}^d$.  

By density argument, it suffices to show Theorem \ref{thm:Lyapunov} when $f\in C_c^\infty(\mathbb{R}^d)$. Indeed, given $f\in \tSob{s}(\mathbb{R}^d)$, $s>d/2+1$, there exists $f_k \in C_c^\infty(\mathbb{R}^d)$ such that $f_k\rightarrow f$ in $\tSob{s}(\mathbb{R}^d)$. Also, we have
\begin{align*}
\action{H(f),G(f)f}-\action{H(f_n),G(f_n)f_n}&=\action{H(f)-H(f_n),G(f)f}+\action{H(f_n),G(f_n)f_n-G(f)f}\\
&\leq \norm{H(f)-H(f_n)}{\tSob{s-2}}\norm{G(f)f}{\tSob{s-1}}\\
&\relphantom{=}+\norm{H(f_n)}{\tSob{s-2}}\norm{G(f_n)f_n-G(f)f}{\tSob{s-1}}.
\end{align*}

If we write 
\[ H_1(p)=(1+|p|^2)^{-1/2}-1,\]
then 
\[ H(f)=-\Delta f -\Div(\nabla f H_1(\nabla f)).\]
Note that $H_1(0)=0$ and $\nabla H_1(0)=0$. Since 
\begin{align*}
H(f)-H(f_n)&=-\Delta(f-f_n)-\Div(\nabla (f-f_n)H_1(\nabla f))-\Div(\nabla f_n(H_1(\nabla f)-H_1(\nabla f_n))),
\end{align*}
it follows from Theorems \ref{thm:Besov-product}, \ref{thm:Moser-estimates},  and the Sobolev embedding theorem that 
\begin{equation}\label{eq:mean-curvature-conv}
\begin{aligned}
&\relphantom{=}\norm{H(f)-H(f_n)}{\tSob{s-2}}\\
&\leq \norm{f-f_n}{\tSob{s}}+\norm{\nabla (f-f_n)H_1(\nabla f)}{\tSob{s-1}}+\norm{\nabla f_n(H_1(\nabla f)-H_1(\nabla f_n))}{\tSob{s-1}}\\
&\leq \mathcal{F}(\norm{f}{\tSob{s}})\norm{f-f_n}{\tSob{s}}\rightarrow 0
\end{aligned}
\end{equation}
as $n\rightarrow\infty$. By Proposition \ref{prop:continuity-DN}, we have 
\begin{align*}
\norm{G(f)f}{\tSob{s-1}}&\leq \mathcal{F}(\norm{f}{\tSob{s}})\norm{f}{\tSob{s}},\\
\norm{G(f_n)f_n-G(f)f}{\tSob{s-1}}&\leq \norm{G(f_n)(f_n-f)}{\tSob{s-1}}+\norm{(G(f_n)-G(f))f}{\tSob{s-1}}\\
&\leq \mathcal{F}(\norm{f}{\tSob{s}})\norm{f_n-f}{\tSob{s}}\rightarrow 0
\end{align*}
as $n\rightarrow\infty$. Hence, it follows that 
\[ \action{H(f),G(f)f}=\lim_{n\rightarrow\infty}\action{H(f_n),G(f_n)f_n}.\]

Throughout this section, we fix $f\in C_c^\infty(\mathbb{R}^d)$ and choose $R_0>0$ so that $\supp f \subset B_{R_0}$ and $\norm{f}{\Leb{\infty}}+1<R_0$. We first prove that the gradient of $\phi$ belongs to not only in $\Leb{2}(\Omega_f)$ but also in $\Leb{p}(\Omega_f)$ for $1<p<2$ in Section \ref{subsec:Lp-integrability}. This proposition will play an important role in controlling the behavior of harmonic functions. Theorem \ref{thm:Lyapunov} will be proved in Section \ref{subsec:pressure}.

\subsection{\texorpdfstring{$L_p$}{}-integrability of the gradient of harmonic functions}\label{subsec:Lp-integrability}
We know that if $f\in \tSob{1/2}$, then $\nabla_{x,y}\phi\in\Leb{2}(\Omega_f)$, which was used in the proof of the existence of the Dirichlet-Neumann operator in Proposition \ref{prop:existence-DN}. If we assume $f\in C_c^\infty(\mathbb{R}^d)$, then we have the following integrability result:
\begin{lemma}\label{lem:boundary-behavior-harmonic}
Let $\phi$ solve \eqref{eq:DN-elliptic} with $g=f$. Then $\nabla_{x,y} \phi \in \Leb{p}(\Omega_f)$ for $1<p\leq 2$.
\end{lemma}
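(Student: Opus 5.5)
Since $\nabla_{x,y}\phi\in\Leb{2}(\Omega_f)$ is already known from Proposition \ref{prop:existence-DN}, the task is integrability near infinity for exponents $1<p<2$. On the bounded part $\Omega_f\cap B_{2R_0}$, Hölder's inequality gives $\Leb{2}\subset\Leb{p}$, so it remains to control $\nabla_{x,y}\phi$ on $\{|(x,y)|>2R_0\}$. There $f$ vanishes, so the region is the exterior of a ball within the lower half-space $\{y<0\}$, with $\phi=0$ on the flat boundary $\{y=0,\ |x|>R_0\}$. The plan is to prove a pointwise decay bound
\begin{equation*}
|\nabla_{x,y}\phi(X)|\apprle |X|^{-(d+1)}\quad\text{for } X\in\Omega_f,\ |X|>2R_0,
\end{equation*}
and then integrate. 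This bound gives $\int_{|X|>2R_0}|X|^{-p(d+1)}\,dX<\infty$ as soon as $p(d+1)>d+1$, i.e.\ for every $p>1$.

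To get the decay, I shift the problem to the half-space. Let $\chi\in C_c^\infty(\mathbb{R}^{d+1})$ equal $1$ on $B_{R_0}$ and vanish outside $B_{3R_0/2}$. Set $\psi=\phi-\chi\tilde f$, where $\tilde f(x,y)=f(x)$; then $\psi=0$ on $\Sigma_f$. A cleaner route is to use $\Omega_0=\{y<0\}\subset$ near $\Omega_f$ only outside $B_{R_0}$: the function $\phi$ is harmonic in $\{y<0\}\setminus \overline{B_{R_0}}$ and vanishes on the flat part of the boundary. Take the odd reflection $\phi^*(x,y)=-\phi(x,-y)$ for $y>0$. It extends $\phi$ to a harmonic function on $\mathbb{R}^{d+1}\setminus\overline{B_{R_0}}$ by the Schwarz reflection principle, since $\phi$ is continuous up to the flat boundary and vanishes there. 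Moreover $\nabla\phi^*\in\Leb{2}$ of the exterior domain.

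Next I expand this exterior harmonic function. A harmonic function in the exterior of a ball in $\mathbb{R}^{d+1}$ with $\Leb{2}$ gradient has the form $c+\sum_{k\geq0}|X|^{-(d-1)-2k}Y_k(X)$, the Kelvin-transform expansion (with a logarithmic term possible when $d=1$). Finiteness of the Dirichlet energy rules out growing modes, and when $d=1$ it also rules out the $\log|X|$ term. Oddness in $y$ kills the constant and the $k=0$ monopole term $|X|^{1-d}$, because odd spherical harmonics have degree at least $1$. The leading term is therefore the dipole $y|X|^{-(d+1)}$, whose gradient is $O(|X|^{-(d+1)})$, and the remaining terms decay faster. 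Uniform convergence of the derivatives of the series for $|X|>2R_0$ follows from standard estimates on the coefficients.

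The main obstacle is justifying the expansion carefully. This covers showing $\phi\to c$ at infinity with the appropriate normalization (for $d=1$ one must fix the constant, since $\phi$ is determined only through its gradient) and handling $d=1$, where the fundamental solution is logarithmic. Oddness resolves both: the reflected function has zero mean on every sphere, so the constant and logarithmic coefficients vanish. If the series argument is awkward, I will instead use the Kelvin transform $\phi^*(X/|X|^2)|X|^{1-d}$, which gives a harmonic function in a punctured ball. Its singularity at the origin is removable because the energy is finite, and it vanishes to first order at $0$ by oddness. Differentiating the Kelvin transform back then yields the same decay rate $|X|^{-(d+1)}$.
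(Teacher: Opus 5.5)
Your proof is correct and follows the same overall plan as the paper. You reflect $\phi$ oddly across $\{y=0\}$ to get a harmonic function outside a ball, prove $|\nabla_{x,y}\phi(X)|\apprle |X|^{-(d+1)}$ because oddness kills the monopole, and finish with H\"older on the bounded part and integrability of $|X|^{-p(d+1)}$ at infinity for $p>1$.

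The difference is in how the decay is obtained. The paper multiplies by a cutoff $\eta$ vanishing on $B_{3R_0/2}$. It identifies $\nabla(\eta\phi)$ with $\nabla v$, where $v=-\Gamma*\Delta(\eta\phi)$, via Calder\'on--Zygmund and Liouville, and expands $\nabla\Gamma(X-Y)$ to first order in $Y\in B_{2R_0}$. It then notes that the zeroth moment $\int\Delta(\eta\phi)$ is the flux of $\nabla\phi$ through $\partial B_{2R_0}$, which vanishes by oddness. This gives the bound directly and uniformly in $d\geq 1$, with no series to control.

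You instead use the classical structure of exterior harmonic functions, via the spherical-harmonic expansion or the Kelvin transform. Finite energy excludes the growing modes and the $d=1$ logarithm, and oddness excludes the constant and the monopole. This gives more information (the leading term is $c\,y|X|^{-(d+1)}$) and makes the role of each hypothesis transparent. The price is that you rely on convergence of the expansion together with its derivatives, which you cite rather than prove.

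One imprecision in your fallback route: for $d\geq 2$, finite Dirichlet energy of $\phi^*$ does not by itself make the singularity of the Kelvin transform removable. A constant $c$ has zero energy yet transforms into $c|X|^{1-d}$. Removability also needs the vanishing spherical means coming from oddness. For example, Poincar\'e's inequality on spheres gives $\int|X|^{-2}|\phi^*|^2<\infty$ at infinity, and hence finite energy of the transform near $0$.
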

\begin{proof}
For simplicity, we write $X=(x,y)$, where $x\in\mathbb{R}^d$ and $y\in \mathbb{R}$. 
By  \cite[Proposition 3.6]{NP20}, there exists a unique variational solution $\phi \in \thSob{1}(\Omega_f)$ to \eqref{eq:DN-elliptic} with $g=f$.  

We claim that for large $|X|$, we have 
\begin{equation}\label{eq:decay-estimate-nabla-phi}
 |\nabla_{x,y} \phi(X)|\apprle \frac{1}{|X|^{d+1}}.
\end{equation}

Note that $\phi$ solves 
\begin{equation*}
\left\{
\begin{alignedat}{2}
\Delta_{x,y} \phi&=0&&\quad \text{in } \{X=(x,y) : |X|>R_0, y<0 \},\\
\phi&=0&&\quad \text{on } \{X=(x,y) : |x|>R_0, y=0 \}.
\end{alignedat}
\right.
\end{equation*}
Extend $\phi$ to $\mathbb{R}^{d+1}\setminus B_{R_0}$ by odd extension with respect to $y$. Then $\phi$ satisfies $\Delta_{X}\phi=0$ in $\mathbb{R}^{d+1}\setminus B_{R_0}$. Now choose a cut-off function $\eta \in C^\infty(\mathbb{R}^{d+1})$ so that $\eta = 0$ in $B_{3R_0/2}$ and $\eta=1$ in  $\mathbb{R}^{d+1}\setminus B_{2R_0}$. Let $\Gamma$ be the fundamental solution to the Laplacian $-\Delta_{X}$. Then if we define 
\[ v(X)=-\int_{\mathbb{R}^{d+1}} \Gamma(X-Y)\Delta(\eta \phi)(Y)dY,\]
then 
\[ \Delta_{X} v = \Delta_{X} (\eta \phi)\quad \text{in } \mathbb{R}^{d+1}.\]
Moreover, we have 
\[ \Delta_{X} (\nabla_{X} v)=\Delta_{X} (\nabla_{X}(\eta \phi)).\]

Since $\nabla_{X} \phi \in \Leb{2}(\Omega_f)$, it follows from the Calder\'on-Zygmund estimate that $\nabla_{X} v\in \Leb{2}(\mathbb{R}^{d+1})$. Hence, by the Liouville theorem for harmonic functions, we conclude that $\nabla_{X}v = \nabla_{X}(\eta \phi)$. 

For $|X|>4R_0$ and $|Y|<2R_0$, we take the Taylor expansion to get 
\begin{equation}
\nabla_{X}\Gamma(X-Y)=(\nabla_X \Gamma)(X)-(\nabla^2 \Gamma)(X)Y+O\left(\frac{1}{|X|^{d+2}}\right)
\end{equation}
uniformly in $Y\in B_{2R_0}$. Hence, it follows that 
\begin{equation}\label{eq:identity-expansion}
\begin{aligned}
&\relphantom{=}(\nabla_X v)(X)\\
&=-(\nabla_X \Gamma)(X)\int_{\mathbb{R}^{d+1}} \Delta_Y(\eta\phi)(Y)dY+(\nabla^2\Gamma)(X)\int_{\mathbb{R}^{d+1}}Y\Delta_{Y}(\eta \phi)(Y)\myd{Y}+O\left(\frac{1}{|X|^{d+2}}\right).
\end{aligned}
\end{equation}

Note that 
\[ \int_{\mathbb{R}^{d+1}}\Delta_{Y}(\eta \phi)(Y)dY=\int_{B_{2R_0}\setminus B_{3R_0/2}} \Delta_Y(\eta \phi)(Y)dY=0.\]
Indeed, it follows from the divergence theorem and the choice of the cutoff function that 
\[ \int_{B_{2R_0}\setminus B_{3R_0/2}} \Delta_{Y}(\eta \phi)(Y)dY=\int_{\partial B_{2R_0}} (\nabla_{Y} \phi)(Y)\cdot Y \myd{\mathcal{H}^d(Y)}.\]
Since $\phi$ is odd with respect to $y$, $\partial_{x_j}\phi$ is odd with respect to $y$ and 
\begin{align*}
 &\relphantom{=}\int_{\partial B_{2R_0}} (\nabla_{X} \phi)(Y)\cdot Y \myd{\mathcal{H}^d(Y)}\\
 &=\sum_{j=1}^d \int_{\partial B_{2R_0}} (\partial_{x_j}\phi)(Y)x_j \myd{\mathcal{H}^d(Y)}+\int_{\partial B_{2R_0}} (\partial_y \phi)(Y)y \myd{\mathcal{H}^d}(Y)=0.
\end{align*}

Hence, by \eqref{eq:identity-expansion}, we have
\begin{equation*}
|(\nabla_{X} \phi)(X)|=|(\nabla_{X} v)(X)|\apprle \frac{1}{|X|^{d+1}}\quad \text{for } |X|>4R_0,
\end{equation*}
which proves \eqref{eq:decay-estimate-nabla-phi}.

Now we are ready to complete the proof of Lemma \ref{lem:boundary-behavior-harmonic}. On a bounded region, it follows from H\"older's inequality that 
\begin{equation*}
\left(\int_{B_{4R_0}'\times (-4R_0,4R_0)} |\nabla_{X}\phi|^p\myd{x}dy\right)^{1/p}\apprle_{p,R_0} \norm{\nabla_{X}\phi}{\Leb{2}},
\end{equation*}
where $B_r'$ denotes the $d$-dimensional ball with radius $r$ centered at the origin.

For large $|Y|$, it follows from \eqref{eq:decay-estimate-nabla-phi} that 
\begin{equation*}
\int_{\mathbb{R}^{d+1}\setminus B_{4R_0}} |\nabla_{X}\phi|^p\myd{Y}\apprle \int_{\mathbb{R}^{d+1}\setminus B_{4R_0}} \frac{1}{|Y|^{(d+1)p}} dY\apprle_{R,p} 1.
\end{equation*}
This completes the proof of Lemma \ref{lem:boundary-behavior-harmonic}.
\end{proof}

\subsection{Estimates on the pressure}\label{subsec:pressure}
In this subsection, we estimate the hydraulic pressure $Q:\overline{\Omega}_f\rightarrow\mathbb{R}$ defined by 
\[ Q(x,y)=\phi(x,y)-y.\]

The following proposition will play an important role in showing that the $\Leb{2}$-norm is a Lyapunov functional for the one-phase Muskat problem, where the periodic case can be found in Alazard and Bresch \cite[Proposition 3.2]{AB24}.

\begin{proposition}\label{prop:estimates-on-pressure} Suppose that $f\in C_c^\infty(\mathbb{R}^d)$ and let $\phi$ solve \eqref{eq:DN-elliptic} with $g=f$.
\begin{enumerate}[label=\textnormal{(\roman*)}]
\item The function $Q$ satisfies the following properties: 
\[
\partial_n Q=-|\nabla_{x,y} Q|\quad\text{and}\quad n=-\frac{\nabla_{x,y}Q}{|\nabla_{x,y}Q|},
\]
where $n$ denotes the unit normal to $\Sigma_f$ given by 
\[ n=\frac{1}{\sqrt{1+|\nabla f|^2}}(-\nabla f,1).\]
Moreover, there exists a constant $c_0>0$ such that the Taylor coefficient defined by
\[ a(x)=-(\partial_y Q)(x,f(x))\]
satisfies $a(x)\geq c_0>0$ for all $x\in\mathbb{R}^d$.
\item For all $(x,y)\in \overline{\Omega_f}$, there holds 
\[ \partial_y Q(x,y)<0.\]
Furthermore, we have 
\begin{equation}
 \inf_{\overline{\Omega}_f} (-\partial_y Q)\geq \min\{\inf_{x\in\mathbb{R}^d} a(x),1\}.
\end{equation}
\item The function $|\nabla_{x,y} Q|$ belongs to $C^\infty(\overline{\Omega})$;
\item For $s>d/2+1$, we have the following bound:
\[  \sup_{(x,y)\in \overline{\Omega}} |\nabla_{x,y} Q(x,y)|^2\leq \mathcal{F}(\norm{f}{\tSob{s}}).\]
\item We have $\nabla_{x,y}^2 Q\in \Leb{2}(\Omega_f)$.
\end{enumerate}
\end{proposition}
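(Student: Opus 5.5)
We treat the five items in order. Throughout, $\phi$ is the variational solution from Lemma \ref{lem:boundary-behavior-harmonic} with boundary data $f$, and $Q=\phi-y$. Since $f\in C_c^\infty$, the boundary $\Sigma_f$ is smooth and flat outside $B_{R_0}'$. Standard boundary Schauder theory then gives $\phi\in C^\infty(\overline{\Omega_f})$, with derivatives bounded on $\overline{\Omega_f}$; at infinity the bound comes from \eqref{eq:decay-estimate-nabla-phi} and interior/odd-reflection estimates. Item (iii) follows immediately: $|\nabla_{x,y}Q|^2$ is smooth, and by (ii) $|\nabla_{x,y}Q|\ge -\partial_yQ>0$, so its square root is smooth too.

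For (i), observe that $Q=f-f=0$ on $\Sigma_f$. Hence $\nabla_{x,y}Q$ is normal to $\Sigma_f$, so $\nabla_{x,y}Q=(\partial_nQ)\,n$. On $\Sigma_f$ we have $\partial_nQ=(1+|\nabla f|^2)^{-1/2}(G(f)f-1)$. By Proposition \ref{prop:parabolicity-DN}, $G(f)f<1-c_0$, so $\partial_nQ<0$, which gives $\partial_nQ=-|\nabla_{x,y}Q|$ and $n=-\nabla_{x,y}Q/|\nabla_{x,y}Q|$. Taking the $y$-component yields
\[a=-\partial_yQ=|\nabla_{x,y}Q|(1+|\nabla f|^2)^{-1/2}=(1-G(f)f)(1+|\nabla f|^2)^{-1}.\]
This is $\ge c_0(1+\norm{\nabla f}{\Leb{\infty}}^2)^{-1}>0$. (Outside the support, $a=1-G(0)\cdot$ locally, and the uniform bound still holds since Proposition \ref{prop:parabolicity-DN} is uniform in $x$.)

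For (ii), the function $w=-\partial_yQ=1-\partial_y\phi$ is harmonic in $\Omega_f$ and bounded. On $\Sigma_f$ it equals $a\ge \min\{\inf a,1\}$. As $y\to-\infty$ or $|X|\to\infty$ we have $\partial_y\phi\to0$ by \eqref{eq:decay-estimate-nabla-phi}, so $w\to1$. The main subtlety is the unbounded domain, and this decay handles it. Apply the maximum principle on the truncated domains $\Omega_f\cap B_\rho$: on $\partial B_\rho\cap\Omega_f$ we have $w\ge 1-C\rho^{-d-1}$, and letting $\rho\to\infty$ gives $w\ge\min\{\inf a,1\}$ on $\overline{\Omega_f}$. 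Strict positivity then follows.

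For (iv), $|\nabla_{x,y}Q|\le 1+|\nabla_{x,y}\phi|$. Since $|\nabla_{x,y}\phi|^2$ is subharmonic, its supremum is attained on the boundary or at infinity, where it vanishes. So it suffices to bound $\nabla_{x,y}\phi$ on $\Sigma_f$. There $\nabla_{x,y}\phi$ is determined by $\nabla f$ and $G(f)f$: the tangential part is $\nabla f$ and the normal part is $G(f)f/\sqrt{1+|\nabla f|^2}$. By Proposition \ref{prop:continuity-DN} and Sobolev embedding ($s-1>d/2$), $\norm{G(f)f}{\Leb{\infty}}\le\mathcal{F}(\norm{f}{\tSob{s}})$. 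This gives the bound with $\mathcal{F}$ depending only on $\norm{f}{\tSob{s}}$, not on the approximation parameters.

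For (v), note $\nabla^2_{x,y}Q=\nabla^2_{x,y}\phi$. Near $\Sigma_f$ and in the bounded region, $L_2$ integrability follows from smoothness up to the boundary. In the far region, the multipole expansion used in Lemma \ref{lem:boundary-behavior-harmonic}, differentiated once more (equivalently, interior estimates on balls of radius $|X|/2$ for the odd extension), gives $|\nabla^2\phi|\lesssim|X|^{-d-2}$. This is square integrable at infinity. The one step needing care is to carry out the boundary regularity and decay uniformly in the unbounded geometry; the odd reflection across $\{y=0\}$ outside $B_{R_0}$ reduces this to interior estimates.
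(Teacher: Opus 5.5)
Your proposal is correct. Items (i)--(iv) follow the paper's argument up to minor variations. In (i) you read the sign of $\partial_n Q$ directly from $\partial_n Q=(G(f)f-1)/\sqrt{1+|\nabla f|^2}$ together with Proposition~\ref{prop:parabolicity-DN}. The paper instead first proves $Q>0$ in $\Omega_f$ (strong minimum principle plus unique continuation), applies Hopf's lemma, and only then uses the same proposition to bound $a$ from below; your shortcut is legitimate. In (ii) you truncate with balls rather than the cylinders $\Omega_f^R$. In (iv) you apply the maximum principle to the subharmonic function $|\nabla_{x,y}\phi|^2$, which tends to $0$ at infinity, rather than to $|\nabla_{x,y}Q|^2$, which tends to $1$.

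The genuinely different step is (v). The paper flattens the boundary by $(x,z)\mapsto(x,f(x)+z)$ and subtracts the harmonic extension $e^{z|\nabla|}f$. It then gets a global $\Leb{2}$ bound on the Hessian from the energy estimate and tangential difference quotients for the divergence-form equation $\Div_{x,z}(A\nabla_{x,z}v)=0$ in the half-space. You instead split the domain into two parts:
\begin{itemize}
\item a bounded part, handled by local boundary regularity for the smooth graph $\Sigma_f$ with smooth data;
\item a far part, where interior estimates on balls of radius $|X|/2$ for the odd reflection upgrade \eqref{eq:decay-estimate-nabla-phi} to $|\nabla^2_{x,y}\phi|\lesssim |X|^{-d-2}$, which is square integrable at infinity in $\mathbb{R}^{d+1}$.
\end{itemize}

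Your route is shorter and reuses Lemma~\ref{lem:boundary-behavior-harmonic}. It is the same mechanism the paper later uses in the proof of Lemma~\ref{lem:Lyapunov-lemma} to kill the boundary terms. It is purely qualitative, with constants depending on the fixed $f\in C_c^\infty$. That suffices, since (v) is only used to make the integral in \eqref{eq:identity-Hessian} finite and to justify passing to the limit $R\to\infty$. The paper's flattening argument yields a quantitative global bound and avoids pointwise decay analysis of second derivatives. The price is a difference-quotient argument with $x$-dependent coefficients on an unbounded half-space.

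One small point: the parenthetical in (i) about ``$a=1-G(0)\cdot$ locally'' outside the support is not meaningful, because $G(f)$ is nonlocal. Outside $\supp f$ one simply has $a(x)=1-G(f)f(x)$. In any case, your bound $a\geq c_0(1+\norm{\nabla f}{\Leb{\infty}}^2)^{-1}$ already holds for every $x$, so the remark can be dropped.
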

\begin{proof}
(i) Following an argument as in \cite[Proposition 4.3]{NP20}, one can show that $Q\geq 0$ in $\Omega_f$. Moreover, $Q>0$ in $\Omega_f$. If not, there exists an interior point $X_0=(x_0,y_0)\in \Omega_f$ such that $Q(X_0)=0$ and $\overline{B_r(X_0)}\subset \Omega$ for some $r>0$. Since $Q\geq 0$, it follows from the strong minimum principle that $Q$ is identically zero in $B_r(X_0)$. Hence, by the unique continuation principle for harmonic functions, $Q$ is identically zero in $\Omega_f$. This implies that $\phi(x,y)=y$ in $\Omega_f$, which contradicts that $\nabla_{x,y} \phi\rightarrow 0$ as $y\rightarrow-\infty$. 

Since $\Sigma_f$ is a graph of a smooth function, it follows from Hopf's lemma that $\partial_n Q<0$ on $\Sigma_f$. Since $Q=0$ on $\Sigma_f$, we have $|\nabla_{x,y} Q|=|\partial_n Q|=-\partial_n Q$ on $\Sigma_f$. Also, we have
\[ 0=(\nabla_x Q)(x,f(x))+(\partial_y Q)(x,f(x))(\nabla f)(x).\]
By algebraic manipulation and Proposition \ref{prop:parabolicity-DN}, we have
\begin{equation}\label{eq:Taylor-coefficients-estimate}
 a(x)=-(\partial_y Q)(x,f(x))=\frac{\nabla_x f\cdot \nabla_x Q-\partial_y Q}{1+|\nabla f|^2}=\frac{1-G(f)f(x)}{1+|\nabla f(x)|^2}>c_0>0
 \end{equation}
for all $x\in\mathbb{R}^d$. 
This implies that 
\[ n= -\frac{\nabla_{x,y} Q}{|\nabla_{x,y} Q|}.\]

(ii) For $R>0$, write 
\begin{equation}\label{eq:truncated-domain}
 \Omega_f^R=\{(x,y) : |x|<R, -R<y<f(x)\}\quad \text{and}\quad \Sigma_f^R=\{(x,f(x)): |x|<R\}.
\end{equation}
Then 
\[ \partial\Omega_f^R=\Sigma_f^R\cup \{(x,y):|x|<R, y=-R\} \cup \{(x,y):|x|=R,-R<y<f(x)\}.\]

Since $-\partial_y Q$ is harmonic in $\Omega_f$, we can apply the minimum principle in $\Omega_f^R$ that 
\begin{equation}\label{eq:minimum-principle}
\begin{aligned}
 -\partial_y Q \geq \inf_{\partial \Omega_f^R} (-\partial_y Q)\geq \min\{\inf_{\Sigma_R}(-\partial_y Q), \inf_{\{|x|=R,-R<y<f(x)\}}(-\partial_y Q), \inf_{\{|x|<R, y=-R\}} (-\partial_y Q)\}.
\end{aligned}
\end{equation}
By Lemma \ref{lem:boundary-behavior-harmonic} and the interior estimate for harmonic functions,  letting $R\rightarrow\infty$ in \eqref{eq:minimum-principle}, we get 
\[ -\partial_y Q \geq \min \{ \inf_{\Sigma} (-\partial_y Q),1\}=\min\{\inf_{x\in\mathbb{R}^d} a(x),1\}.\]

(iii) By (i) and (ii), $|\nabla_{x,y}Q|$ has a positive lower bound. Since $Q$ is harmonic, it follows that $|\nabla_{x,y} Q|$ is smooth.

(iv) Since $Q$ is harmonic, we have
\[ \Delta_{x,y} |\nabla_{x,y} Q|^2 = 2|\nabla_{x,y}^2 Q|\geq 0.\]
Hence, it follows from the maximum principle for subharmonic functions that 
\[ \sup_{\overline{\Omega}_f^R} |\nabla_{x,y} Q|^2 =\sup_{\partial\Omega_f^R} |\nabla_{x,y} Q|^2.\]
Since
\[ |\nabla_{x,y} Q|^2 = |(\nabla_x \phi,\partial_y \phi-1)|^2=|\nabla_x \phi|^2+|\partial_y \phi-1|^2,\]
it follows from Lemma \ref{lem:boundary-behavior-harmonic} and the interior estimate for harmonic function that 
\[ \sup_{\overline{\Omega}_f} |\nabla_{x,y} Q|^2 =\max\left\{\sup_{\Sigma_f} |\nabla_{x,y}Q|^2,1\right\}.\]

On the other hand,  it follows from \eqref{eq:Taylor-coefficients-estimate} that
\begin{align*}
|\nabla_{x,y} Q|^2 &= |\nabla_x Q|^2+|\partial_y Q|^2 = (\partial_y Q)^2(1+|\nabla_x f|^2)\\
&=\frac{(1-G(f)f(x))^2}{1+|\nabla f(x)|^2}.
\end{align*}
Hence, by Proposition \ref{prop:continuity-DN} and the Sobolev embedding theorem, 
$$
|(\nabla_{x,y} Q)(x,f(x))|^2\leq \mathcal{F}(\norm{f}{\tSob{s}}).
$$

(v) It suffices to show that $\nabla^2_{x,y} \phi \in \Leb{2}(\Omega_f)$. We straighten the boundary as below. Define 
\[ \Phi(x,z)=(x,f(x)+z),\quad \Phi:\mathbb{R}^d\times (-\infty,0)\rightarrow \Omega_f.\]
Then the inverse of $\Phi$ is 
\[ \Psi(x,z)=(x,z-f(x)).\]
Define $v(x,z)=\phi(x,f(x)+z)$. Then $v:\mathbb{R}^d\times (-\infty,0)\rightarrow\mathbb{R}$ and $v(x,0)=f(x)$. Moreover, $v$ satisfies 
\[ \Div_{x,z} (A(x)\nabla_{x,z} v)=0,\quad v(x,0)=f(x),\]
where 
\[ A(x)=
\begin{bmatrix}
I & -\nabla f\\
-(\nabla f)^T &1+|\nabla_x f|^2
\end{bmatrix}.
\]

Since $f\in C_c^\infty(\mathbb{R}^d)$, the coefficient $A$ is bounded and uniformly continuous. Let $w(x,z)=e^{z|\nabla|} f(x)$ and define $u=v-w$. Then $u$ satisfies
\[ \Div_{x,z}(A(x)\nabla_{x,z}u)=-\Div_{x,z}(A(x)\nabla_{x,z} w),\quad u(x,0)=0\]
and 
\[ \norm{\nabla_{x,z}u }{\Leb{2}(\mathbb{R}^d\times \{z<0\})}\leq C\norm{\nabla_{x,z} w}{\Leb{2}(\mathbb{R}^d\times \{z<0\})}\apprle \norm{f}{H^1}.\]
Moreover, a method of finite difference quotient gives 
\[ \norm{\nabla_{x,z}^2u }{\Leb{2}(\mathbb{R}^d\times \{z<0\})}\apprle \norm{f}{H^{3/2}}.\]
Since $\nabla_{x,z}^2 w \in \Leb{2}(\mathbb{R}^d\times\{z<0\})$, we have $\nabla_{x,z}^2 v\in \Leb{2}(\mathbb{R}^d\times \{z<0\})$. Hence, $\nabla_{x,y}^2\phi \in \Leb{2}(\Omega_f)$. This completes the proof of Proposition \ref{prop:estimates-on-pressure}.
\end{proof}

\subsection{Proof of Theorem \ref{thm:Lyapunov}}
Let $f\in C_c^\infty(\mathbb{R}^d)$. We first observe the following identity
\begin{equation}\label{eq:Q-identity}
 J(f):=\int_{\mathbb{R}^d} H(f)G(f)f\myd{x}=\int_{\Sigma_f} \partial_n |\nabla_{x,y} Q|d\mathcal{H}^d.
\end{equation}

Indeed, by the definition of the Dirichlet-Neumann operator, we have  
\[  \int_{\mathbb{R}^d} H(f)G(f)f\myd{x}=\int_{\mathbb{R}^d} H(f)(x)(\partial_N \phi)(x,f(x))\myd{x}.\]

Since 
\[ \partial_N Q=\partial_N\phi-1\quad \text{on } \Sigma_f \quad \text{and}\quad \int_{\mathbb{R}^d} H(f)\myd{x}=0,\]
it follows that 
\[ J(f)=\int_{\mathbb{R}^d} H(f)(\partial_N Q)|_{y=f}\myd{x}=\int_{\Sigma_f} H(f)\partial_n Q \myd{\mathcal{H}^d}.\]

Since $\partial_y Q<0$ and $Q(x,f(x))=0$, we have
\[ (\nabla f)(x) =\frac{1}{1-(\partial_y \phi)(x,f(x))}(\nabla_x \phi)(x,f(x)).\]
This gives 
\begin{align*}
H(f)&=-\Div_x\left(\frac{\nabla_x \phi}{\sqrt{(1-\partial_y \phi)^2+|\nabla_x \phi|^2}}\right)-\partial_y\left(\frac{\partial_y\phi-1}{\sqrt{(1-\partial_y \phi)^2+|\nabla_x \phi|^2}} \right)\\
&=-\Div_{x,y}\left(\frac{\nabla_{x,y} Q}{|\nabla_{x,y} Q|} \right)
\end{align*}
on $\Sigma_f$. On the other hand, using $\partial_n Q=-|\nabla_{x,y} Q|$, we get 
\[ \int_{\Sigma_f} H(f)\partial_n Q \myd{\mathcal{H}^d}=\int_{\Sigma_f} \Div_{x,y}\left(\frac{\nabla_{x,y} Q}{|\nabla_{x,y} Q|}\right) |\nabla_{x,y} Q|\myd{\mathcal{H}^d}.\]

Since $\Div_{x,y} (\nabla_{x,y} Q)=0$ and $n=-\frac{\nabla_{x,y} Q}{|\nabla_{x,y} Q|}$, we can write
\[ \Div\left(\frac{\nabla_{x,y} Q}{|\nabla_{x,y} Q|}\right) |\nabla_{x,y} Q|=-\frac{\nabla_{x,y} Q}{|\nabla_{x,y} Q|} \cdot \nabla_{x,y} |\nabla_{x,y} Q|=n\cdot \nabla_{x,y} |\nabla_{x,y} Q|=\partial_n |\nabla_{x,y} Q|,\]
which implies the identity \eqref{eq:Q-identity}.

The following rigidity estimate will help us to show \eqref{eq:Lyapunov-estimates}.
\begin{lemma}\label{lem:norm-estimate-linear-algebra}
If $A$ is symmetric $(d+1)\times (d+1)$ matrix and has trace zero, then 
\[ |A|^2 |v|^2 -|Av|^2 \geq \frac{1}{d+1} |A|^2 |v|^2\]
for all $v\in\mathbb{R}^{d+1}$.
\end{lemma}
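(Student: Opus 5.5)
The plan is to reduce to eigenvalues and then use a single Cauchy--Schwarz estimate, which is where the trace-zero condition enters. Throughout, $|A|$ denotes the Frobenius norm, $|A|^2=\sum_{i,j}A_{ij}^2$. The claimed inequality is equivalent to
\[ |Av|^2\leq \frac{d}{d+1}|A|^2|v|^2\quad\text{for all } v\in\mathbb{R}^{d+1}. \]

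First, since $A$ is real symmetric, we diagonalize it by an orthogonal matrix $O$, writing $A=O^T\Lambda O$ with $\Lambda=\mathrm{diag}(\lambda_1,\dots,\lambda_{d+1})$. Both sides of the inequality are invariant under this change of basis: $|A|^2=\sum_i\lambda_i^2$, $|v|=|Ov|$, and $|Av|=|\Lambda Ov|$. So it suffices to treat $A=\Lambda$ diagonal. Then
\[ |\Lambda v|^2=\sum_i\lambda_i^2v_i^2\leq \Big(\max_i\lambda_i^2\Big)|v|^2, \]
and the problem reduces to the purely scalar inequality
\[ \max_i\lambda_i^2\leq \frac{d}{d+1}\sum_{i=1}^{d+1}\lambda_i^2. \]

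Second, we prove this scalar inequality. Reorder the eigenvalues so that $\lambda_1^2=\max_i\lambda_i^2$. Since $A$ has trace zero, $\lambda_1=-\sum_{i=2}^{d+1}\lambda_i$. Applying Cauchy--Schwarz to these $d$ terms gives
\[ \lambda_1^2\leq d\sum_{i=2}^{d+1}\lambda_i^2. \]
Consequently
\[ \sum_{i=1}^{d+1}\lambda_i^2\geq \lambda_1^2+\frac{\lambda_1^2}{d}=\frac{d+1}{d}\,\lambda_1^2, \]
which is exactly the required bound.

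Combining the two steps yields $|Av|^2\leq \frac{d}{d+1}|A|^2|v|^2$, which is equivalent to the statement of Lemma \ref{lem:norm-estimate-linear-algebra}. There is no real obstacle here; the only point needing care is to use the trace-zero condition through Cauchy--Schwarz on the remaining $d$ eigenvalues, rather than the trivial bound $\max_i\lambda_i^2\leq\sum_i\lambda_i^2$, which would only give the useless constant $0$ in place of $\frac{1}{d+1}$.
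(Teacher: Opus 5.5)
Your proposal is correct and follows the paper's proof essentially step for step: diagonalize $A$ by the spectral theorem, bound $|Av|^2\le(\max_i\lambda_i^2)|v|^2$, and apply the trace-zero condition with Cauchy--Schwarz to the remaining $d$ eigenvalues to get $(d+1)\lambda_1^2\le d|A|^2$. Your explicit remark that $|A|$ is the Frobenius norm and that both sides are invariant under the orthogonal change of basis fills in a detail the paper leaves implicit.
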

\begin{proof}
It suffices to show that 
\[ \frac{d}{d+1}|A|^2 |v|^2 \geq |Av|^2.\]

Since $A$ is symmetric, it follows from the spectral theorem that there exist an orthogonal matrix $Q$ and a diagonal matrix $D=\mathrm{diag}(\lambda_1,\dots,\lambda_{d+1})$ such that $Q^TAQ=D$. Hence, 
\begin{equation}\label{eq:linear-algebra-A-bound}
 |Av|^2 \leq \left(\max_{1\leq i\leq d+1} \lambda_i^2\right) |v|^2.
\end{equation}
Without loss of generality, $\lambda_1^2=\max_{1\leq i\leq d+1} \lambda_i^2$. Since $A$ has zero trace, it follows from Cauchy-Schwarz inequality that 
\[ |\lambda_1|^2=\left|-\sum_{i=2}^{d+1} \lambda_i \right|^2\leq d \sum_{i=2}^{d+1} \lambda_i^2=d\left(\sum_{i=1}^{d+1}\lambda_i^2-\lambda_1^2\right). \]
This implies that 
\[ (d+1)\lambda_1^2 \leq d \sum_{i=1}^{d+1}\lambda_i^2= d|A|^2.\]
Hence, by \eqref{eq:linear-algebra-A-bound}, we get the desired result.
\end{proof}

Theorem \ref{thm:Lyapunov} will be proved by observing another identity:
\begin{lemma}\label{lem:Lyapunov-lemma}
We have 
\begin{equation}\label{eq:identity-Hessian}
 J(f)=\int_{\Omega_f} \frac{|\nabla_{x,y}Q|^2 |\nabla_{x,y}^2 Q|^2 -|\nabla_{x,y} Q \cdot \nabla_{x,y}\nabla_{x,y} Q|^2}{|\nabla_{x,y} Q|^3}\myd{x}dy\geq \frac{1}{\mathcal{F}(\norm{f}{\tSob{s}})}\norm{f}{\thSob{3/2}}^2.
\end{equation}
\end{lemma}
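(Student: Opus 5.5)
Starting from the identity \eqref{eq:Q-identity}, $J(f)=\int_{\Sigma_f}\partial_n|\nabla_{x,y}Q|\,d\mathcal{H}^d$, the plan is to apply the divergence theorem to the vector field $\nabla_{x,y}|\nabla_{x,y}Q|$ on the truncated domain $\Omega_f^R$ of \eqref{eq:truncated-domain}, and then let $R\to\infty$.

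\textbf{The interior integrand.} By Proposition \ref{prop:estimates-on-pressure}(ii)--(iii), $|\nabla_{x,y}Q|$ is smooth with a positive lower bound on $\overline{\Omega}_f$. Write $V=|\nabla_{x,y}Q|$ and use $\Delta Q=0$. A direct computation gives
\[
\Delta V=\frac{\Delta (V^2)}{2V}-\frac{|\nabla V|^2}{V}
=\frac{|\nabla^2 Q|^2}{V}-\frac{|\nabla^2Q\,\nabla Q|^2}{V^3},
\]
and this is exactly the integrand in \eqref{eq:identity-Hessian}.

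\textbf{Passage to the limit.} On $\Omega_f^R$ the divergence theorem gives
\[
\int_{\Omega_f^R}\Delta V=\int_{\Sigma_f^R}\partial_nV+\int_{\text{bottom}\cup\text{sides}}\partial_\nu V .
\]
We must show the bottom and lateral contributions vanish as $R\to\infty$. Since $\nabla V$ is bounded by $|\nabla^2Q|=|\nabla^2\phi|$ times a bounded factor (using $V\ge c>0$), it suffices to control $\nabla^2\phi$ on these pieces.
\begin{itemize}
\item Away from the support of $f$, the odd reflection used in Lemma \ref{lem:boundary-behavior-harmonic} makes $\phi$ harmonic outside $B_{R_0}$. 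The expansion there gives $|\nabla\phi|\lesssim |X|^{-d-1}$.
\item Interior gradient estimates on balls of radius comparable to $|X|$ then give $|\nabla^2\phi|\lesssim |X|^{-d-2}$.
\item The boundary pieces have area $O(R^d)$, so their contributions are $O(R^{-2})\to0$.
\end{itemize}
To pass to the limit in the interior integral, I would use monotone convergence. The integrand is nonnegative by Cauchy--Schwarz, $|\nabla^2Q\,\nabla Q|\le|\nabla^2Q||\nabla Q|$. It is also integrable, since it is bounded by $|\nabla^2Q|^2/V$ and $\nabla^2Q\in L_2(\Omega_f)$ by Proposition \ref{prop:estimates-on-pressure}(v). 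This proves the identity, and nonnegativity of $J(f)$ follows.

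\textbf{The coercive bound.} Apply Lemma \ref{lem:norm-estimate-linear-algebra} with $A=\nabla^2Q$ (symmetric, trace zero) and $v=\nabla Q$. Combined with $V\le\mathcal{F}(\|f\|_{H^s})^{1/2}$ from Proposition \ref{prop:estimates-on-pressure}(iv), this gives
\[
J(f)\ge\frac{1}{d+1}\int_{\Omega_f}\frac{|\nabla^2\phi|^2}{V}\ge\frac{1}{\mathcal{F}(\|f\|_{H^s})}\int_{\Omega_f}|\nabla^2_{x,y}\phi|^2 .
\]
It remains to show
\[
\int_{\Omega_f}|\nabla^2\phi|^2\gtrsim \frac{\|f\|_{\dot H^{3/2}}^2}{\mathcal{F}(\|f\|_{H^s})}.
\]
Each $\partial_{x_j}\phi$ is harmonic in $\Omega_f$ with $\nabla\partial_{x_j}\phi\in L_2$. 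The $\dot H^{1/2}$ trace inequality for harmonic functions on Lipschitz graph domains, with constant depending on $\|\nabla f\|_{L_\infty}$, then gives
\[
\|\partial_j\phi|_{\Sigma_f}\|_{\dot H^{1/2}}\le \mathcal{F}\,\|\nabla\partial_j\phi\|_{L_2(\Omega_f)},
\]
and similarly for $\partial_y\phi$. On the boundary, $\nabla f=\nabla_x\phi+\partial_y\phi\,\nabla f$, that is, $\nabla f=\nabla_x\phi|_{\Sigma}/(1-\partial_y\phi|_\Sigma)=\nabla_x\phi|_{\Sigma}/a\sqrt{\dots}$. Proposition \ref{prop:estimates-on-pressure}(i) gives $1-\partial_y\phi=a\ge c_0$, and the product estimates in $H^{1/2}\cap L_\infty$ then bound $\|\nabla f\|_{\dot H^{1/2}}$ by $\mathcal{F}(\|f\|_{H^s})\|\nabla^2\phi\|_{L_2}$.

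\textbf{Main obstacle.} I expect this last step to be the hardest: making the quotient estimate rigorous, including the $\dot H^{1/2}$ multiplier bound for $1/(1-\partial_y\phi)$. I would first try flattening via $\Phi$ as in Proposition \ref{prop:estimates-on-pressure}(v) and using the standard trace theorem on the half-space.
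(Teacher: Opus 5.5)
Your proposal is correct and follows essentially the paper's route: the divergence theorem on $\Omega_f^R$ for $\nabla_{x,y}|\nabla_{x,y}Q|$, combined with the formula for $\Delta_{x,y}|\nabla_{x,y}Q|$, then vanishing of the bottom and lateral boundary terms by decay of $\nabla^2_{x,y}\phi$, and for the coercive bound Lemma \ref{lem:norm-estimate-linear-algebra}, the upper bound of Proposition \ref{prop:estimates-on-pressure}(iv), and a trace argument. The differences are minor: you kill the boundary terms with the pointwise bound $|\nabla^2_{x,y}\phi(X)|\lesssim |X|^{-d-2}$, giving $O(R^{-2})$, whereas the paper goes through $\nabla_{x,y}\phi\in L_p$ and H\"older to get $O(R^{d-1-(d+1)/p})$ with $p$ near $1$; and you make explicit, via $\nabla f=(\nabla_x\phi)|_{\Sigma_f}/a$ with $a\geq c_0$, the step the paper only attributes to ``the trace lemma''.
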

\begin{proof}
First of all, it follows from Proposition \ref{prop:estimates-on-pressure} that $\nabla_{x,y}^2 Q\in \Leb{2}(\Omega_f)$ and there exists a constant $c>0$ such that 
\[ \mathcal{F}(\norm{f}{\tSob{s}})\geq |\nabla_{x,y}Q(x,y)|\geq c>0\quad\text{for all } (x,y)\in \Omega_f. \]
These imply that the integral in \eqref{eq:identity-Hessian} is well defined. Since $Q$ is harmonic in $\Omega_f$, by Proposition \ref{prop:estimates-on-pressure}, Lemma \ref{lem:norm-estimate-linear-algebra}, and the trace lemma, we get the inequality in \eqref{eq:identity-Hessian}.
 
 Hence, it suffices to show that the identity holds. Recall the definitions of $\Omega^R_f$ and $\Sigma^R_f$ given in \eqref{eq:truncated-domain}.  For $R>R_0$, where $R_0>0$ is a fixed number so that $\supp f \subset B_{R_0}$ and $\norm{f}{\Leb{\infty}}+1<R_0$, it follows from the divergence theorem that 
\begin{align*}
\int_{\Omega^R_f} \Delta_{x,y}(|\nabla_{x,y} Q|)\myd{x}dy&=\int_{\partial \Omega^R_f} \partial_n |\nabla_{x,y}Q|d\mathcal{H}^d\\
&=\int_{\Sigma^R_f} \partial_n |\nabla_{x,y} Q| \myd{\mathcal{H}^d}+\int_{\{(x,-R) : |x|<R\}} \partial_n |\nabla_{x,y} Q| \myd{\mathcal{H}^d}\\
&\relphantom{=}+\int_{\{(x,y) : |x|=R, -R<y<f(x)\}} \partial_n |\nabla_{x,y} Q| \myd{\mathcal{H}^d}.
\end{align*}	

We claim that 
\begin{equation}\label{eq:boundary-kill-term}
\int_{\{(x,-R) : |x|<R\}} \partial_n |\nabla_{x,y} Q| \myd{\mathcal{H}^d}+\int_{\{(x,y) : |x|=R, -R<y<f(x)\}} \partial_n |\nabla_{x,y} Q| \myd{\mathcal{H}^d}\rightarrow 0
\end{equation}
as $R\rightarrow\infty$. If so, then by the dominated convergence theorem, \eqref{eq:Q-identity}, and the identity
\[
\Delta_{x,y} |\nabla_{x,y} Q|=\frac{1}{|\nabla_{x,y} Q|^3} (|\nabla_{x,y} Q|^2 |\nabla^2_{x,y}Q|^2-|\nabla_{x,y} Q \cdot \nabla_{x,y}\nabla_{x,y} Q|^2),
\]
 we get the desired result.

To estimate the first integral in \eqref{eq:boundary-kill-term}, since
\[ |\partial_y |\nabla_{x,y} Q||\apprle |\nabla_{x,y}^2 Q|,\]
it follows that 
\begin{equation}\label{eq:boundary-norm-control-I}
\left|\int_{\{(x,-R):|x|<R\}} \partial_n |\nabla_{x,y} Q|\myd{\mathcal{H}^d}\right|\apprle \int_{B_R} |\nabla_{x,y}^2 Q(x,-R)|\myd{x}.
\end{equation}

Choose $1<p\leq 2$ so that $1-2/(d+1)<1/p$.  Since $\nabla_{y,z}\phi \in \Leb{p}(\Omega_f)$, it follows from the interior estimate for the harmonic function and H\"older's inequality, we have
\begin{equation}\label{eq:interior-Q}
\begin{aligned}
|\nabla_{x,y}^2 Q(x,-R)|=|\nabla_{x,y}^2\phi(x,-R)|&\apprle\frac{1}{R^{d+2}}\int_{B_{R/10}(x,-R)} |\nabla_{y,z} \phi| dzdy\\
&\apprle R^{-1-\frac{d+1}{p}}\norm{\nabla_{x,y}\phi}{\Leb{p}(\Omega_f)}.
\end{aligned}
\end{equation}
Hence, by \eqref{eq:boundary-norm-control-I} and \eqref{eq:interior-Q}, we have
\begin{equation}\label{eq:decay-rate-harmonic-estimates-1}
\left|\int_{\{(x,-R):|x|<R\}} \partial_n |\nabla_{x,y} Q|\myd{\mathcal{H}^d}\right|\apprle_{p,f,d} {R}^{d-1-\frac{d+1}{p}}.
\end{equation}

To estimate the second integral in \eqref{eq:boundary-kill-term}, we note that it is equal to 
\begin{equation}\label{eq:second-integral-estimate-interior}
 \int_{\partial B_R'} \int_{-R}^{f(x)} \frac{x}{R}\cdot \nabla_x (|\nabla_{x,y} Q|)dyd\mathcal{H}^{d-1}(x).\end{equation}

We may assume that $R>4R_0$. For $|x|=R$, since $\phi(z,0)=0$ for $z\in B_{R/10}'(x)$, by taking the odd extension, it follows from the interior estimates for harmonic functions and Proposition \ref{prop:estimates-on-pressure} that  
\begin{equation}\label{eq:hessian-phi}
|\nabla^2\phi(x,y)|\apprle \frac{1}{R^{d+2}}\int_{B_{R/10}(x,y)} |\nabla_{z,w} \phi |dzdw\apprle_{p,d,f} R^{-1-\frac{d+1}{p}}
\end{equation}
for all $(x,y) \in (B_{11R/10}'\setminus B_{9R/10}')\times (-R,0)$. Then by \eqref{eq:hessian-phi}, \eqref{eq:second-integral-estimate-interior} is bounded by  
\begin{equation}\label{eq:decay-rate-harmonic-estimates-2}
\int_{\partial B_R'} \int_{-R}^0 |\nabla_{x,y}^2 Q|\myd{y}d\mathcal{H}^{d-1}\apprle R^{d-1-\frac{d+1}{p}}.
\end{equation}
Hence, by letting $R\rightarrow\infty$ in \eqref{eq:decay-rate-harmonic-estimates-1} and \eqref{eq:decay-rate-harmonic-estimates-2}, the claim \eqref{eq:boundary-kill-term} holds and we get 
\[  J(f)=\lim_{R\rightarrow \infty}\int_{\Sigma_f^R} \partial_n |\nabla_{x,y}Q|d\mathcal{H}^d=\int_{\Omega_f} \frac{|\nabla_{x,y}Q|^2 |\nabla_{x,y}^2 Q|^2 -|\nabla_{x,y} Q \cdot \nabla_{x,y}\nabla_{x,y} Q|^2}{|\nabla_{x,y} Q|^3}\myd{x}dy.\]
This completes the proof of Lemma \ref{lem:Lyapunov-lemma}.
 \end{proof}

\begin{proof}[Proof of Theorem \ref{thm:Lyapunov}]
If $f$ is a strong solution of  \eqref{eq:one-phase-reformulation}, then 
\[ \frac{1}{2}\frac{d}{dt}\norm{f}{\Leb{2}}^2+\int_{\mathbb{R}^d} f G(f)f \myd{x}+\int_{\mathbb{R}^d} fG(f)H(f) \myd{x}=0.\]
Since $G(f)$ is self-adjoint on $\Leb{2}$, it follows from Lemma \ref{lem:Lyapunov-lemma}, Proposition \ref{prop:estimates-on-pressure}, and \eqref{eq:trace-gravity} that 
\[ \frac{1}{2}\frac{d}{dt}\norm{f}{\Leb{2}}^2+\frac{1}{\mathcal{F}(\norm{f}{\tSob{s}})}(\norm{f}{\thSob{1/2}}^2+\norm{f}{\thSob{3/2}}^2)\leq 0.\]
This completes the proof of Theorem \ref{thm:Lyapunov}.
\end{proof}

\section{First-order expansion of the Dirichlet-Neumann operator}\label{sec:first-order-expansion}

Since we reformulated the one-phase Muskat problem in terms of the Dirichlet-Neumann operator, it is crucial to understand the structure of the Dirichlet-Neumann operator. In this point of view, using Alinhac's good unknown and paralinearization argument, Nguyen \cite{NP20} proved local well-posedness of the one-phase Muskat problem with surface tension for any initial data in $\tSob{s}$, $s>d/2+1$. However, this approach is not suitable for obtaining global well-posedness of the problem since it will lead to temporal growth in the energy estimate. See \eqref{eq:energy-estimate-surface-tension}. 

Nevertheless, if we assume smallness of the interface in Chemin-Lerner spaces, then we have the first-order Taylor expansion of the Dirichlet-Neumann operator as follows.

\begin{theorem}\label{thm:first-order-expansion}
Let $1\leq p,q\leq\infty$, $s\geq \sigma>d/p+1$, and $\sigma\geq \sigma_0\geq 1$. Then there exists a constant $\varepsilon_0=\varepsilon_0(s,\sigma,d)>0$ such that if 
\begin{equation}\label{eq:smallness-DN}
\norm{f}{\tLeb{\infty}^t\bes{s}{p,q}}<\varepsilon_0,
\end{equation}
then $G(f)g\in \tLeb{\infty}^t\bes{\sigma-1}{p,q}$ for any $g\in \tLeb{\infty}^t\bes{\sigma}{p,q}$ and 
\begin{equation}\label{eq:linearlization-DN}
G(f)g=|\nabla|g + \mathcal{R}(f;g), 
\end{equation}
where 
\begin{equation}\label{eq:estimates-remainder-DN}
\begin{aligned}
\norm{\mathcal{R}(f;g)}{\tLeb{\infty}^t\bes{\sigma-1}{p,q}}&\leq \mathcal{F}(\norm{f}{\tLeb{\infty}^t\bes{s}{p,q}})\norm{f}{\tLeb{\infty}^t\bes{s}{p,q}}\norm{g}{\tLeb{\infty}^t\bes{\sigma}{p,q}},\\
\norm{\mathcal{R}(f;g)}{\tLeb{\infty}^t\hbes{\sigma_0-1}{p,q}}&\leq \mathcal{F}(\norm{f}{\tLeb{\infty}^t\bes{s}{p,q}})\norm{f}{\tLeb{\infty}^t\bes{s}{p,q}}(\norm{g}{\tLeb{\infty}^t\hbes{\sigma_0}{p,q}}+\norm{g}{\tLeb{\infty}^t\hbes{d/p+1}{p,1}}).
\end{aligned}
\end{equation}

Moreover, if $f_1,f_2$ satisfy \eqref{eq:smallness-DN}, then 
\begin{equation}\label{eq:contraction-DN}
\begin{aligned}
\norm{\mathcal{R}(f_1;g)-\mathcal{R}(f_2;g)}{\tLeb{\infty}^t\bes{\sigma-1}{p,q}}&\leq \mathcal{F}(\norm{f_1}{\tLeb{\infty}^t\bes{s}{p,q}},\norm{f_2}{\tLeb{\infty}^t\bes{s}{p,q}})\norm{f_1-f_2}{\tLeb{\infty}^t\bes{s}{p,q}}\norm{g}{\tLeb{\infty}^t\bes{\sigma}{p,q}}
\end{aligned}
\end{equation}
for all $g\in \tLeb{\infty}^t\bes{\sigma}{p,q}$.
\end{theorem}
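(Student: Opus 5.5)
We follow the strategy of Nguyen \cite{N22} (see also \cite{AD15,N23}) and straighten the fluid domain by the flat change of variables already used in the proof of Proposition \ref{prop:estimates-on-pressure}(v). With $v(x,z)=\phi(x,f(x)+z)$ on $\mathbb{R}^d\times(-\infty,0)$, the equation becomes $\Div_{x,z}(A\nabla_{x,z}v)=0$ with $v(x,0)=g$. Here $A=I+\mathcal{A}(\nabla f)$, and $\mathcal{A}$ is linear-plus-quadratic in $\nabla f$ with $\mathcal{A}(0)=0$. We rewrite this as
\[
(\partial_z^2+\Delta)v=\Div_{x,z}\mathcal{G}(\nabla f,\nabla_{x,z}v),
\]
where $\mathcal{G}$ is at least bilinear, i.e.\ it carries at least one factor of $\nabla f$. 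In these variables
\[
G(f)g=\partial_z v-\nabla f\cdot\nabla v\,\big|_{z=0}+|\nabla f|^2\partial_z v\big|_{z=0},
\]
that is, $G(f)g=(1+|\nabla f|^2)\partial_zv-\nabla f\cdot\nabla v$ at $z=0$.

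We split $v=e^{z|\nabla|}g+u$. Here $u$ solves the Poisson problem with zero Dirichlet data and right-hand side $\Div\mathcal{G}$, and we represent it by the explicit half-space Green's formula in Fourier variables (a Duhamel-type formula in $z$ with the kernels $e^{-|z-z'||\nabla|}$ and $e^{(z+z')|\nabla|}$). This gives
\[
\mathcal{R}(f;g)=\partial_zu|_{z=0}+\bigl(|\nabla f|^2\partial_z v-\nabla f\cdot\nabla v\bigr)|_{z=0}.
\]
We then place $\nabla_{x,z}u$ in the mixed Chemin--Lerner space $\tLeb{2}^z\tLeb{\infty}^t\bes{\sigma-1/2}{p,q}\cap\tLeb{\infty}^z\tLeb{\infty}^t\bes{\sigma-1}{p,q}$, and this is where these spaces, with their extra variable $z$, are used. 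For each dyadic block, the Bernstein bound $\norm{e^{-t|\nabla|}P_kf}{\Leb{p}}\apprle e^{-ct2^k}\norm{P_kf}{\Leb{p}}$ of Proposition \ref{prop:properties-LP} together with Young's inequality in $z$ gives the maximal-regularity estimate for the frequency-localized Poisson problem. We get
\[
\norm{\nabla_{x,z}u}{X^{\sigma-1}}\apprle\norm{\mathcal{G}}{\tLeb{1}^z\tLeb{\infty}^t\bes{\sigma-1}{p,q}+\tLeb{2}^z\tLeb{\infty}^t\bes{\sigma-3/2}{p,q}},
\]
where $X^{\sigma-1}$ denotes the space above. Taking the trace at $z=0$ from the $\tLeb{\infty}^z$ component gives the $\bes{\sigma-1}{p,q}$ bound for $\partial_zu|_{z=0}$.

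Next we estimate $\mathcal{G}$. The product laws (Theorem \ref{thm:Besov-product}) and the Moser estimates (Theorem \ref{thm:Moser-estimates}) hold in Chemin--Lerner spaces. Because $\sigma-1>d/p$, $\bes{\sigma-1}{p,q}$ is an algebra contained in $\Leb{\infty}$, so we obtain
\[
\norm{\mathcal{G}}{}\le\mathcal{F}(\norm{f}{})\,\norm{f}{\tLeb{\infty}^t\bes{s}{p,q}}\bigl(\norm{\nabla_{x,z}e^{z|\nabla|}g}{X^{\sigma-1}}+\norm{\nabla_{x,z}u}{X^{\sigma-1}}\bigr).
\]
The harmonic extension satisfies $\norm{\nabla e^{z|\nabla|}g}{X^{\sigma-1}}\apprle\norm{g}{\tLeb{\infty}^t\bes{\sigma}{p,q}}$; the low-frequency part is handled with $P_{\leq0}$ and the homogeneous gradient. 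Under the smallness \eqref{eq:smallness-DN}, the $u$-term is absorbed into the left-hand side. This closes a fixed-point/a priori bound for $u$ and yields the first estimate in \eqref{eq:estimates-remainder-DN}; the boundary terms $|\nabla f|^2\partial_zv-\nabla f\cdot\nabla v$ are handled the same way. Because each term contains a factor of $\nabla f$, the remainder carries the factor $\norm{f}{}$.

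For the homogeneous estimate in $\hbes{\sigma_0-1}{p,q}$ we repeat the argument with homogeneous norms. In the paraproduct decomposition the low-regularity factor is always placed in $\Leb{\infty}$, and it is controlled by $\hbes{d/p+1}{p,1}$ through Proposition \ref{prop:Besov}(iii); this is why $\norm{g}{\hbes{d/p+1}{p,1}}$ appears. For the contraction \eqref{eq:contraction-DN} we take the difference of the equations for $u_1$ and $u_2$. The difference $\mathcal{G}(\nabla f_1,\cdot)-\mathcal{G}(\nabla f_2,\cdot)$ is bounded with Theorem \ref{thm:Moser-estimates}(ii), and the same absorption argument applies.

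The main obstacle is the low frequencies in the inhomogeneous setting: $e^{z|\nabla|}P_{\leq0}g$ decays only slowly in $z$, so its gradient in $\tLeb{2}^z$ is controlled only by a homogeneous norm. We will first try to bound $\nabla_{x,z}$ of it by $\norm{P_{\le0}g}{\Leb{p}}$ using Bernstein on $|\nabla|P_{\le0}$. If that fails, we keep $u$ in $\tLeb{\infty}^z$ only, for which the constraint $\sigma_0\ge1$ suffices.
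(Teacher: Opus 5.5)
\textbf{There is a genuine gap: the derivative count does not close with the flat change of variables.} Your overall architecture is sound: straighten the domain, split off $e^{z|\nabla|}g$, solve a fixed point for $u$, and read $\mathcal{R}(f;g)$ off the trace. The problem is the flat map $(x,z)\mapsto(x,f(x)+z)$, and it shows up in two steps.

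First, your Poisson bound is off by one derivative. $\nabla_{x,z}u$ is obtained from the flux $\mathcal{G}$ by an operator of order zero. Blockwise Young in $z$ gives $\norm{P_k\nabla_{x,z}u}{\Leb{r_1}^z\Leb{p}^x}\apprle 2^{k(1/r_2-1/r_1)}\norm{P_k\mathcal{G}}{\Leb{r_2}^z\Leb{p}^x}$, plus a local term $\mathcal{G}_z$ in $\partial_z u$. So landing in your $X^{\sigma-1}$ requires $\mathcal{G}\in\tLeb{1}^z\tLeb{\infty}^t\bes{\sigma}{p,q}$ or $\tLeb{2}^z\tLeb{\infty}^t\bes{\sigma-1/2}{p,q}$, not $\bes{\sigma-1}{p,q}$ or $\bes{\sigma-3/2}{p,q}$.

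Second, at the correct level the flat coefficients cannot be estimated. They are functions of $\nabla f(x)$ alone, with neither decay nor smoothing in $z$. Consider the high--low product $P_k(\nabla f)\,P_{<k}(\nabla_{x,z}v)$. The factor $\norm{P_{<k}\nabla_{x,z}v}{\Leb{2}^z\Leb{\infty}^x}$ does not decay in $k$; already for $v=e^{z|\nabla|}g$, low frequencies $2^j$ decay in $z$ only on the scale $2^{-j}$. So you need $\nabla f\in\bes{\sigma-1/2}{p,q}$, i.e.\ $s\geq\sigma+1/2$, and $s\geq\sigma+1$ on the $\tLeb{1}^z$ route. The theorem allows $s=\sigma$, and the same loss recurs in \eqref{eq:contraction-DN}. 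As written, the estimate that was meant to absorb this loss is false, and the corrected argument gives at best a version with a half-derivative loss.

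\textbf{The missing idea is the regularizing diffeomorphism the paper uses.} It straightens by $(x,z)\mapsto(x,z+\mathcal{P})$ with $\mathcal{P}=e^{z|\nabla|}f$. Then the coefficients satisfy \eqref{eq:P-graph-estimates}, $\norm{\nabla\mathcal{P}}{\tLeb{1}^z\tLeb{\infty}^t\hbes{\sigma}{p,q}}+\norm{\partial_z\mathcal{P}}{\tLeb{1}^z\tLeb{\infty}^t\hbes{\sigma}{p,q}}\apprle\norm{f}{\tLeb{\infty}^t\hbes{\sigma}{p,q}}$. This is a one-derivative gain in $\Leb{1}^z$ that exactly offsets the one-derivative loss of the Duhamel bound \eqref{eq:Pif-estimates} with $(r_1,r_2)=(\infty,1)$. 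The decay of $\nabla\mathcal{P}$ in $z$ also lets every low-frequency sum carry a positive weight ($\hbes{1}{p,1}$, $\hbes{d/p+1}{p,1}$), which is how the inhomogeneous norms are handled. The paper further factors $\Delta_{x,z}=(\partial_z+|\nabla|)(\partial_z-|\nabla|)$ and sets $w=(\partial_z-|\nabla|)v-Q_a[v]$. This removes the local term, so only $\tLeb{1}^z$ norms of $Q_a,Q_b$ are ever needed, and it gives $\mathcal{R}(f;g)=w|_{z=0}$ directly.

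\textbf{Your fallback and your stated obstacle.} Keeping $u$ in $\tLeb{\infty}^z$ only would avoid the high-frequency loss, since each block kernel $2^ke^{-c2^k|z|}$ is uniformly in $\Leb{1}^z$. But that bound holds only block by block, and the blocks $k\le0$ must be summed into $P_{\le0}$. For general sources the trace $\partial_z u|_{z=0}$ is not bounded from $\Leb{\infty}^z\Leb{2}^x$ at low frequencies, so you would need an argument exploiting the structure of $\mathcal{G}$ there, and none is given. Conversely, the obstacle you single out is harmless: $\norm{P_k\nabla_{x,z}e^{z|\nabla|}g}{\Leb{2}^z\Leb{p}^x}\apprle 2^{k/2}\norm{P_kg}{\Leb{p}}$, which sums over $k\le0$ to a bound by $\norm{P_{\le0}g}{\Leb{p}}$.
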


\begin{remark}\leavevmode
\begin{enumerate}[label=\textnormal{(\roman*)}]
\item The Taylor expansion of the Dirichlet-Neumann operator \eqref{eq:linearlization-DN} is well known in the water wave literature. See \cite{GP25,N22} and references therein. However, the assumptions in each paper differ due to its purpose.
\item If $f$ and $g$ are time-independent, then one can obtain a similar theorem in Besov spaces.
\end{enumerate}
\end{remark}

This section is organized as follows. We first find the fixed point structure of the Dirichlet-Neumann operator to obtain estimates on the remainder in the appropriate function space, which will be presented in Section \ref{subsec:continuity-DN}. The contraction estimate of the Dirichlet-Neumann operator will be given in Section \ref{subsec:contraction-DN}. This will help us to establish global well-posedness of the one-phase Muskat problem with surface tension.

\subsection{Fixed point formulation}

We first find the fixed point structure of the Dirichlet-Neumann operator. Here we fix $t$ for simplicity.  Define 
\[  \rho(x,z)=z+\mathcal{P}(x,z),\quad \mathcal{P}(x,z)=e^{z|\nabla|} f(x),\quad (x,z)\in \mathbb{R}^d\times(-\infty,0).\]
Note that $\rho(x,0)=f(x)$ and $\rho(x,z)\rightarrow -\infty$ as $z\rightarrow -\infty$. Also,  the map $\Phi:(x,z)\mapsto (x,\rho(x,z))$ $\mathbb{R}^d\times(-\infty,0)$ to  $\Omega_f$. Also, note that  
\[ \partial_z \rho =1+|\nabla| \mathcal{P}.\]
By Proposition \ref{prop:properties-LP}, we have
\begin{equation*}
\norm{e^{z|\nabla|} |\nabla| f}{\Leb{\infty}}\apprle \sum_k e^{cz 2^k} 2^k \norm{P_k f}{\Leb{\infty}}\apprle \sum_k e^{cz 2^k} 2^{k(1+d/p)} \norm{P_k f}{\Leb{p}}\apprle\norm{f}{\hbes{d/p+1}{p,1}}
\end{equation*}
for any $z\leq 0$. Hence, there exists $\varepsilon_0>0$ such that if $f$ satisfies \eqref{eq:smallness-DN}, then $\Phi$ is Lipschitz diffeomorphism from $\mathbb{R}^d\times(-\infty,0)$ onto $\Omega_f$.

If $\phi$ satisfies $\Delta_{x,y}\phi=0$, then $v(x,z)=\phi(x,\rho(x,z))$ solves
\[ \Div_{x,z}(\mathcal{A}\nabla_{x,z} v)(x,z)=0\quad \text{in } \mathbb{R}^d\times(-\infty,0),\]
where 
\[ \mathcal{A}=\begin{bmatrix}
\partial_z \rho & -\nabla \rho\\
-(\nabla \rho)^T & \frac{1+|\nabla \rho|^2}{\partial_z\rho}
\end{bmatrix}
=I + \begin{bmatrix}
(|\nabla|\mathcal{P})I &-\nabla\mathcal{P}\\
-(\nabla_x\mathcal{P})^T & \frac{|\nabla_x\mathcal{P}|^2-|\nabla|\mathcal{P}}{1+|\nabla|\mathcal{P}}
\end{bmatrix}. \]
In other words, we have 
\begin{equation*}
 \Delta_{x,z} v =\partial_z Q_a[v]+\Div_x Q_b[v]\quad \text{in } \mathbb{R}^d\times(-\infty,0),
 \end{equation*}
 where
 \begin{equation}\label{eq:Qa-v-Qb-v}
 \begin{aligned}
 Q_a[v]&=\nabla_x \mathcal{P}\cdot \nabla_x v-\frac{|\nabla\mathcal{P}|^2-|\nabla|\mathcal{P}}{1+|\nabla|\mathcal{P}}\partial_z v,\\
 Q_b[v]&=(\partial_z v)\nabla_x\mathcal{P}-(\partial_z\mathcal{P})\nabla v.
 \end{aligned}
 \end{equation}
 
 By factorizing $\Delta_{x,z}v = (\partial_z+|\nabla|)(\partial_z-|\nabla|)v$, if we define 
 \begin{equation*}\label{eq:w-rep}
 w=(\partial_z-|\nabla|)v-Q_a[v],
\end{equation*}
 then $(w,v)$ satisfies
 \begin{equation}\label{eq:v-w-eqn}
 \left\{
 \begin{aligned}
 (\partial_z +|\nabla|)w&=-|\nabla|Q_a+\Div_x Q_b,\\
 (\partial_z-|\nabla|)v&=w+Q_a.
 \end{aligned}
 \right.
 \end{equation}

Since $v(x,z)=\phi(x,\rho(x,z))$, it follows from \eqref{eq:Qa-v-Qb-v} and \eqref{eq:v-w-eqn} that
\begin{equation}\label{eq:DN-remainder-expression}
\begin{aligned}
G(f)g&=\left.\frac{1+|\nabla\rho|^2}{\partial_z\rho}\partial_z v -\nabla\rho \cdot \nabla v\,\right|_{z=0}\\
&=\left.\left(1+\frac{|\nabla\mathcal{P}|^2-|\nabla|\mathcal{P}}{1+|\nabla|\mathcal{P}}\right)\partial_z v-\nabla\rho \cdot \nabla v\,\right|_{z=0}\\
&=(\partial_z v -Q_a[v])|_{z=0}\\
&=|\nabla|g+w|_{z=0}.
\end{aligned}
\end{equation}

Using $\partial_z v=|\nabla| v +w+Q_a[v]$ in \eqref{eq:v-w-eqn}, we note that $Q_a$ and $Q_b$ can be rewritten as 
\begin{equation*}
\begin{aligned}
Q_a[w,v;f]&=\frac{1}{1+\mathcal{B}}\nabla_x\mathcal{P}\cdot\nabla_x v- \frac{\mathcal{B}}{1+\mathcal{B}}(w+|\nabla|v),\\
Q_b[w,v;f]&=(|\nabla|v+w+Q_a[w,v;f])\nabla\mathcal{P}-(\partial_z\mathcal{P})\nabla v,
\end{aligned}
\end{equation*}
where 
\[ \mathcal{B}=\frac{|\nabla \mathcal{P}|^2-|\nabla|\mathcal{P}}{1+|\nabla|\mathcal{P}}.\]

By \eqref{eq:v-w-eqn}, we see that $(w,v)$ is a fixed point of 
\begin{equation}\label{eq:fixed-point-formula-DN}
\left\{
\begin{aligned}
w(x,z)&=\Pi_f^1(w,v)(x,z)\\
v(x,z)&=e^{z|\nabla|} g(x)+\Pi_f^2(w,v)(x,z).
\end{aligned}
\right.
\end{equation}
where 
\begin{equation*}
\begin{aligned}
\Pi_f^1(w,v)(x,z)&=\int_{-\infty}^z e^{-(z-\tau)|\nabla|} (\Div_x Q_b[w,v;f](x,\tau)-|\nabla|Q_a[w,v;f])(x,\tau)d\tau,\\
 \Pi_f^2(w,v)(x,z)&=-\int_z^0 e^{(z-\tau)|\nabla|} \{w(x,\tau)+Q_a[w,v;f](x,\tau)\}d\tau.
\end{aligned}
\end{equation*}

Hence, the goal is to find an appropriate function space so that \eqref{eq:fixed-point-formula-DN} has a unique fixed point $(w,v)$.

\subsection{Continuity estimates}\label{subsec:continuity-DN}

We use the Littlewood-Paley projection to design the function spaces. By Proposition \ref{prop:properties-LP} and Young's convolution inequality, for $1\leq p\leq \infty$ and $1\leq r_2\leq r_1\leq \infty$, we have
\begin{equation*}
\begin{aligned}
\norm{P_k \Pi_f^1(w,v)}{\Leb{r_1}^z\Leb{\infty}^t\Leb{p}^x}&\apprle 2^{k(1/r_2-1/r_1)}(\norm{P_k Q_a}{\Leb{r_2}^z\Leb{\infty}^t\Leb{p}^x}+\norm{P_k Q_b}{\Leb{r_2}^z\Leb{\infty}^t\Leb{p}^x}),\\
\norm{\nabla P_k\Pi_f^2(w,v)}{\Leb{r_1}^z\Leb{\infty}^t\Leb{p}^x}&\apprle 2^{k(1/r_2-1/r_1)}(\norm{P_k Q_a}{\Leb{r_2}^z\Leb{\infty}^t\Leb{p}^x}+\norm{P_k w}{\Leb{r_2}^z\Leb{\infty}^t\Leb{p}^x})
\end{aligned}
\end{equation*}
for all $k\in \mathbb{Z}$.

By the definition of the Chemin-Lerner spaces, for $\sigma\in\mathbb{R}$ and $1\leq q\leq \infty$, we have
\begin{equation}\label{eq:Pif-estimates}
\begin{aligned}
\norm{\Pi_f^1 (w,v)}{\tLeb{r_1}^z\tLeb{\infty}^t\hbes{\sigma+{1}/{r_1}-{1}/{r_2}}{p,q}}&\apprle \norm{Q_a}{\tLeb{r_2}^z\tLeb{\infty}^t\hbes{\sigma}{p,q}}+\norm{Q_b}{\tLeb{r_2}^z\tLeb{\infty}^t\hbes{\sigma}{p,q}}, \\
\norm{\nabla\Pi_f^2 (w,v)}{\tLeb{r_1}^z\tLeb{\infty}^t\hbes{\sigma+{1}/{r_1}-{1}/{r_2}}{p,q}}&\apprle \norm{Q_a}{\tLeb{r_2}^z\tLeb{\infty}^t\hbes{\sigma}{p,q}}+\norm{w}{\tLeb{r_2}^z\tLeb{\infty}^t\hbes{\sigma}{p,q}}.
\end{aligned}
\end{equation}

To control the inhomogeneous terms, it follows from \eqref{eq:Pif-estimates} with $(r_1,r_2)=(\infty,1)$ and $\sigma=1$ that
\begin{equation}\label{eq:Pif-estimates-inhomogeneous}
\begin{aligned}
\norm{\Pi_f^1(w,v)}{\Leb{\infty}^z{L}^t_{\infty}\Leb{p}^x}&\apprle\norm{Q_a}{\tilde{L}^z_1\tilde{L}^t_{\infty}\hbes{1}{p,1}}+\norm{Q_b}{\tilde{L}^z_1\tilde{L}^t_{\infty}\hbes{1}{p,1}},\\
\norm{\nabla\Pi_f^2(w,v)}{\Leb{\infty}^z{L}^t_{\infty}\Leb{p}^x}&\apprle\norm{Q_a}{\tilde{L}^z_1\tilde{L}^t_{\infty}\hbes{1}{p,1}}+\norm{w}{\tilde{L}^z_1\tilde{L}^t_{\infty}\hbes{1}{p,1}}.
\end{aligned}
\end{equation}

On the other hand, note that 
\begin{equation}\label{eq:g-estimate}
\norm{e^{z|\nabla|} g}{\tLeb{r_1}^z\tLeb{\infty}^t\hbes{\sigma+1/r_1}{p,q}}\apprle  \norm{g}{\tLeb{\infty}^t\hbes{\sigma}{p,q}}.
\end{equation}

From \eqref{eq:Pif-estimates} and \eqref{eq:Pif-estimates-inhomogeneous}, we introduce following norms:
\begin{align*}
 \norm{w}{\dot{Y}^\sigma}&=\norm{w}{\tLeb{\infty}^z\tLeb{\infty}^t\hbes{d/p}{p,1}}+\norm{w}{\tLeb{1}^z\tLeb{\infty}^t\hbes{d/p+1}{p,1}}+\norm{w}{\tLeb{\infty}^z\tLeb{\infty}^t\hbes{\sigma-1}{p,q}}+\norm{w}{\tLeb{1}^z\tLeb{\infty}^t\hbes{\sigma}{p,q}},\\
  \norm{w}{{Y}^\sigma}&= \norm{w}{\dot{Y}^\sigma}+\norm{w}{\Leb{\infty}^z\Leb{\infty}^t\Leb{p}^x}+\norm{w}{\tLeb{1}^z\tLeb{\infty}^t\hbes{1}{p,1}}.
 \end{align*}
Then by \eqref{eq:Pif-estimates}, \eqref{eq:Pif-estimates-inhomogeneous}, and the definition of norms, it remains to estimate $Q_a$ and $Q_b$ in the space $\tilde{L}^z_1\tilde{L}^t_{\infty}\hbes{\sigma}{p,q}$. 
For convenience, we introduce 
\[ \norm{w}{Z^\sigma_{p,q}}=\norm{w}{\Leb{\infty}^{z,t,x}}+\norm{w}{\tLeb{1}^z\tLeb{\infty}^t\hbes{\sigma}{p,q}}. \]

\begin{lemma}\label{lem:Qa-Qb-control}
Let $1\leq p,q\leq \infty$, $s>d/p+1$, and $s\geq \sigma \geq 1$. There exists $\varepsilon_0>0$ such that if 
\begin{equation}
\norm{f}{\tLeb{\infty}^t\bes{s}{p,q}}<\varepsilon_0, 
\end{equation}
then we have 
\begin{align*}
\norm{Q_a}{\tLeb{1}^z\tLeb{\infty}^t\hbes{\sigma}{p,q}}+\norm{Q_b}{\tLeb{1}^z\tLeb{\infty}^t\hbes{\sigma}{p,q}}&\leq \mathcal{F}(\norm{f}{\tLeb{\infty}^t\bes{s}{p,q}})\norm{f}{\tLeb{\infty}^t\bes{s}{p,q}}(\norm{\nabla v}{Z^\sigma_{p,q}}+\norm{w}{Z^\sigma_{p,q}}).
\end{align*}

\end{lemma}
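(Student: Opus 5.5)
The plan is to treat $Q_a$ and $Q_b$ as finite sums of products of the form
\[ F(\nabla_x\mathcal{P},|\nabla|\mathcal{P})\cdot(\text{one of } \nabla_x v,\ |\nabla| v,\ w),\]
where $F$ is smooth and $F(0)=0$. In every such product the coefficient is built from the harmonic extension $\mathcal{P}=e^{z|\nabla|}f$. The relevant coefficient functions are
\[ \frac{1}{1+\mathcal{B}}\nabla_x\mathcal{P},\qquad \frac{\mathcal{B}}{1+\mathcal{B}},\qquad \nabla\mathcal{P},\qquad \partial_z\mathcal{P}=|\nabla|\mathcal{P}, \]
together with the term $Q_a\nabla\mathcal{P}$ inside $Q_b$. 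Since $|\nabla|v=-\nabla\cdot\mathcal{R}\nabla v$ (Riesz transforms), we have $\|\,|\nabla| v\|_{Z}\apprle\|\nabla v\|_{Z}$ on the Besov part. The $L_\infty$ part of $|\nabla|v$ needs a small detour: either use $\hbes{0}{\infty,1}$ control, or note that $|\nabla| v$ only ever enters multiplied by a coefficient that is controlled in $\tLeb{1}^z$ with extra regularity. I will use the first option and absorb the difference into the $\tLeb{1}^z\tLeb{\infty}^t\hbes{d/p+1}{p,1}$-type bounds.

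The key step is a product estimate in $\tLeb{1}^z\tLeb{\infty}^t\hbes{\sigma}{p,q}$ that splits integrability in $z$. Via Bony's paraproduct (the Chemin--Lerner version of Theorem \ref{thm:Besov-product}), I aim for
\[ \norm{Fu}{\tLeb{1}^z\tLeb{\infty}^t\hbes{\sigma}{p,q}}\apprle \norm{F}{\Leb{\infty}^{z,t,x}}\norm{u}{\tLeb{1}^z\tLeb{\infty}^t\hbes{\sigma}{p,q}}+\norm{u}{\Leb{\infty}^{z,t,x}}\norm{F}{\tLeb{1}^z\tLeb{\infty}^t\hbes{\sigma}{p,q}}. \]
This holds because in each paraproduct piece one factor is placed in $L_1^z$ and the other in $L_\infty^z$. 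With this estimate the coefficient must be controlled in $\Leb{\infty}^{z,t,x}\cap\tLeb{1}^z\tLeb{\infty}^t\hbes{\sigma}{p,q}$, with a bound of order $\norm{f}{\tLeb{\infty}^t\bes{s}{p,q}}$.

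For $\nabla\mathcal{P}$ and $|\nabla|\mathcal{P}$ I use \eqref{eq:g-estimate} with $r_1=1$: it gives $\tLeb{1}^z\tLeb{\infty}^t\hbes{\sigma}{p,q}$ control by $\norm{f}{\tLeb{\infty}^t\hbes{\sigma}{p,q}}$, which is bounded by the $\bes{s}{p,q}$ norm since $\sigma\le s$. Here the homogeneous low frequencies are harmless, because $\sigma\ge1>0$ and the operator $\nabla e^{z|\nabla|}$ gains. For the $L_\infty$ bound I use the Bernstein computation preceding Lemma \ref{lem:Qa-Qb-control}: $\|e^{z|\nabla|}|\nabla| f\|_{L_\infty}\apprle\|f\|_{\hbes{d/p+1}{p,1}}\apprle\|f\|_{\bes{s}{p,q}}$, using $s>d/p+1$. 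This also gives $|\mathcal{B}|\le 1/2$ once $\varepsilon_0$ is small. For the nonlinear coefficients $\mathcal{B}/(1+\mathcal{B})$ and $(1+\mathcal{B})^{-1}\nabla\mathcal{P}$, I apply the Moser estimate Theorem \ref{thm:Moser-estimates}(i) in its Chemin--Lerner form to the smooth function $F(a,b)=(|a|^2-b)/(1+b-|a|^2+b)$ etc., which vanishes at $0$. This bounds them by $\mathcal{F}(\norm{f}{})\norm{f}{}$ in the $\tLeb{1}^z$ norm.

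The main obstacle is that $Q_a$ appears implicitly inside $Q_b$, and that Moser in $\tLeb{1}^z$ needs care, since the composition estimate is not straightforwardly linear in an $L^1_z$ norm. I would handle Moser by writing $F(U)=U\,G(U)$ with $U=(\nabla\mathcal{P},|\nabla|\mathcal{P})$ and applying the product estimate above, recursively, with $G(U)$ only in $\Leb{\infty}\cap\tLeb{\infty}^z\tLeb{\infty}^t\hbes{\sigma-1}{}$-type spaces. If that fails, I would expand $G$ in a power series, which converges by smallness. The $Q_a\nabla\mathcal{P}$ term is then estimated once the $Q_a$ bound is in hand: apply the product estimate with $F=\nabla\mathcal{P}$, put $Q_a$ in the $\tLeb{1}^z$ norm, and use $\|Q_a\|_{L_\infty}\apprle\varepsilon_0(\|\nabla v\|_{L_\infty}+\|w\|_{L_\infty})$. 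Summing all pieces gives the claim.
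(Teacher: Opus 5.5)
Your plan is essentially the paper's proof: the $Z^\sigma_{p,q}$ product estimate \eqref{eq:Z-sigma-algebra} (one factor in $\Leb{\infty}$, the other in $\tLeb{1}^z\tLeb{\infty}^t\hbes{\sigma}{p,q}$), the Poisson-extension bounds \eqref{eq:P-graph-estimates-Zsigma} for $\nabla\mathcal{P}$ and $\partial_z\mathcal{P}$, a power-series bound for the rational coefficients (your fallback is exactly Lemma \ref{lem:nonlinear-contribution}), and estimating $Q_a$ before $Q_b$. The one point to correct is the $\Leb{\infty}$ bound on $|\nabla| v$, which also enters your bound on $\norm{Q_a}{\Leb{\infty}}$: it cannot come from a $\tLeb{1}^z$ norm, and the paper instead uses $\norm{\nabla v}{\tLeb{\infty}^z\tLeb{\infty}^t\hbes{d/p}{p,1}}$ in \eqref{eq:Zsigma-norm-gradient-w}, which controls $\norm{|\nabla| v}{\Leb{\infty}}$ via $\hbes{d/p}{p,1}\hookrightarrow\hbes{0}{\infty,1}$---so in the paper's argument, as in yours, the right-hand side really carries this $\dot{Y}^\sigma$-type norm, not $\norm{\nabla v}{Z^\sigma_{p,q}}$ alone.
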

The proof will be given in Appendix \ref{app:Qa-Qb-estimates}.  Then by the definition of $\dot{Y}^\sigma$, \eqref{eq:Pif-estimates} with $(r_1,r_2)\in\{(\infty,1),(1,1)\}$, and Lemma \ref{lem:Qa-Qb-control}, we have
\begin{equation}\label{eq:Pi1-mapping-Ydotsigma}
\begin{aligned}
\norm{\Pi_f^1(w,v)}{\dot{Y}^\sigma}&\leq \mathcal{F}(\norm{f}{\tLeb{\infty}^t\bes{s}{p,q}})\norm{f}{\tLeb{\infty}^t\bes{s}{p,q}}(\norm{w}{\dot{Y}^\sigma}+\norm{\nabla v}{\dot{Y}^\sigma})\\
\norm{\nabla\Pi_f^2(w,v)}{\dot{Y}^\sigma}&\leq \mathcal{F}(\norm{f}{\tLeb{\infty}^t\bes{s}{p,q}})\norm{f}{\tLeb{\infty}^t\bes{s}{p,q}}(\norm{w}{\dot{Y}^\sigma}+\norm{\nabla v}{\dot{Y}^\sigma})+\norm{w}{\dot{Y}^\sigma}.
\end{aligned}
\end{equation}
Also, by \eqref{eq:g-estimate}, we have
\begin{equation}\label{eq:g-lifting-homogeneous}
\norm{\nabla e^{z|\nabla|} g}{\dot{Y}^\sigma}\apprle \norm{\nabla g}{\tLeb{\infty}^t\hbes{d/p}{p,1}}+\norm{\nabla g}{\tLeb{\infty}^t\hbes{\sigma-1}{p,q}}\apprle \norm{g}{\tLeb{\infty}^t\hbes{d/p+1}{p,1}}+\norm{g}{\tLeb{\infty}^t\hbes{\sigma}{p,q}}.
\end{equation}

If $s,\sigma>d/p+1$, then \eqref{eq:Pif-estimates-inhomogeneous} and \eqref{eq:Pi1-mapping-Ydotsigma} give
\begin{equation}\label{eq:Pi1-mapping-Ysigma}
\begin{aligned}
\norm{\Pi_f^1(w,v)}{{Y}^\sigma}&\leq \mathcal{F}(\norm{f}{\tLeb{\infty}^t\bes{s}{p,q}})\norm{f}{\tLeb{\infty}^t\bes{s}{p,q}}(\norm{w}{{Y}^\sigma}+\norm{\nabla v}{{Y}^\sigma})\\
\norm{\nabla\Pi_f^2(w,v)}{{Y}^\sigma}&\leq \mathcal{F}(\norm{f}{\tLeb{\infty}^t\bes{s}{p,q}})\norm{f}{\tLeb{\infty}^t\bes{s}{p,q}}(\norm{w}{{Y}^\sigma}+\norm{\nabla v}{{Y}^\sigma})+\norm{w}{{Y}^\sigma},\\
\norm{\nabla e^{z|\nabla|} g}{{Y}^\sigma}&\apprle \norm{g}{\tLeb{\infty}^t\hbes{d/p+1}{p,1}}+ \norm{g}{\tLeb{\infty}^t\hbes{\sigma}{p,q}}+ \norm{g}{\tLeb{\infty}^t\hbes{1}{p,q}}\apprle \norm{g}{\tLeb{\infty}^t\bes{\sigma}{p,q}}.
\end{aligned}
\end{equation}

To perform Picard iteration and prove contraction estimates,  introduce
$$
\left\{
\begin{aligned}
	\delta Q_a &=Q_a[w_1,v_1;f_1]-Q_a[w_2,v_2;f_2],\\
	\delta Q_ b&=Q_b[w_1,v_1;f_1]-Q_b[w_2,v_2;f_2],\\
	\delta w & = w_1-w_2,\\
	\delta v & = v_1-v_2,\\
	\delta f &= f_1-f_2.
\end{aligned}
\right.$$

The following lemma will be proved in Appendix \ref{app:Qa-Qb-estimates}. 

\begin{lemma}\label{lem:difference-estimate}
Let $1\leq p,q\leq \infty$, $s\geq \sigma \geq 1$, and $s>d/p+1$. Then there exists $\varepsilon_0>0$ such that if 
\begin{equation}
\norm{f_i}{\tLeb{\infty}^t\bes{s}{p,q}}<\varepsilon_0, \quad i=1,2,
\end{equation} 
then 
\begin{align*}
\norm{\delta Q_a}{\tLeb{1}^z\tLeb{\infty}^t\hbes{\sigma}{p,q}}&\apprle  \mathcal{F}(\norm{f_1}{\tLeb{\infty}^t\bes{s}{p,q}},\norm{f_2}{\tLeb{\infty}^t\bes{s}{p,q}})\norm{\delta f}{\tLeb{\infty}^t\hbes{\sigma}{p,q}}(\norm{\nabla v_1}{Z^\sigma_{p,q}}+\norm{w_1}{Z^\sigma_{p,q}})\\
&\relphantom{=}+\mathcal{F}(\norm{f_2}{\tLeb{\infty}^t\bes{s}{p,q}})\norm{f_2}{\tLeb{\infty}^t\bes{s}{p,q}}(\norm{\delta \nabla v}{Z^\sigma_{p,q}}+\norm{\delta w}{Z^\sigma_{p,q}}),\\
\norm{\delta Q_b}{\tLeb{1}^z\tLeb{\infty}^t\hbes{\sigma}{p,q}}&\apprle \norm{\delta f}{\tLeb{\infty}^t\hbes{\sigma}{p,q}}\mathcal{F}(\norm{f_1}{\tLeb{\infty}^t\bes{s}{p,q}},\norm{f_2}{\tLeb{\infty}^t\bes{s}{p,q}})\left(\sum_{i=1}^2(\norm{\nabla v_i}{Z^\sigma_{p,q}}+\norm{w_i}{Z^\sigma_{p,q}})\right)\\
&\relphantom{=}+(\norm{f_1}{\tLeb{\infty}^t\bes{s}{p,q}}+\norm{f_2}{\tLeb{\infty}^t\bes{s}{p,q}})(\norm{\nabla_x(\delta v)}{Z^\sigma_{p,q}}+\norm{\delta w}{Z^\sigma_{p,q}})\\
&\relphantom{=}+\mathcal{F}(\norm{f_2}{\tLeb{\infty}^t\bes{s}{p,q}})\norm{f_2}{\tLeb{\infty}^t\bes{s}{p,q}}(\norm{\nabla_x(\delta v)}{Z^\sigma_{p,q}}+\norm{\delta w}{Z^\sigma_{p,q}}).
\end{align*}
\end{lemma}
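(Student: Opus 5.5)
The plan is to treat $Q_a$ and $Q_b$ as explicit multilinear expressions in $(\mathcal{P},w,v)$, subtract them at the two data sets, and telescope so that each difference carries exactly one factor of $\delta f$, $\delta w$, or $\delta\nabla v$. We then estimate every product in $\tLeb{1}^z\tLeb{\infty}^t\hbes{\sigma}{p,q}$ by the Chemin--Lerner version of Theorem \ref{thm:Besov-product}, with the same tools used for Lemma \ref{lem:Qa-Qb-control}.

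Write $\mathcal{P}_i=e^{z|\nabla|}f_i$, $\delta\mathcal{P}=e^{z|\nabla|}\delta f$, and $\mathcal{B}_i=\mathcal{B}(\mathcal{P}_i)$. For $Q_a$ we split
\[
\delta Q_a=\Big(Q_a[w_1,v_1;f_1]-Q_a[w_1,v_1;f_2]\Big)+Q_a[\delta w,\delta v;f_2],
\]
using that $Q_a$ is linear in $(w,v)$ for fixed $f$. The second piece is handled verbatim by Lemma \ref{lem:Qa-Qb-control}, which yields $\mathcal{F}(\norm{f_2}{})\norm{f_2}{}(\norm{\delta\nabla v}{Z^\sigma_{p,q}}+\norm{\delta w}{Z^\sigma_{p,q}})$; here $|\nabla|\delta v$ is controlled by $\nabla\delta v$ in homogeneous Besov norms. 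The first piece is a sum of terms of the form
\[
\Big(\tfrac{1}{1+\mathcal{B}_1}-\tfrac{1}{1+\mathcal{B}_2}\Big)\nabla\mathcal{P}_1\cdot\nabla v_1,\qquad
\tfrac{1}{1+\mathcal{B}_2}\nabla\delta\mathcal{P}\cdot\nabla v_1,\qquad
\Big(\tfrac{\mathcal{B}_1}{1+\mathcal{B}_1}-\tfrac{\mathcal{B}_2}{1+\mathcal{B}_2}\Big)(w_1+|\nabla|v_1).
\]
Each contains one factor linear in $\delta\mathcal{P}$ after applying Theorem \ref{thm:Moser-estimates}(ii) to $F(\mathcal{B})=\mathcal{B}/(1+\mathcal{B})$, together with the difference $\mathcal{B}_1-\mathcal{B}_2$, which is itself a product of $(\nabla\delta\mathcal{P},|\nabla|\delta\mathcal{P})$ with smooth functions of $\nabla\mathcal{P}_i,|\nabla|\mathcal{P}_i$.

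The key bookkeeping is where to put the $\tLeb{1}^z$ integrability. As in Lemma \ref{lem:Qa-Qb-control}, we put it on the $\mathcal{P}$-factor, using \eqref{eq:g-estimate} with $r_1=1$:
\[
\norm{\nabla\delta\mathcal{P}}{\tLeb{1}^z\tLeb{\infty}^t\hbes{\sigma}{p,q}}\apprle\norm{\delta f}{\tLeb{\infty}^t\hbes{\sigma}{p,q}}.
\]
We pair this with $\norm{\nabla v_1}{\Leb{\infty}}$ or $\norm{w_1}{\Leb{\infty}}$. For the companion term in the product rule, the high norm sits on $\nabla v_1$ or $w_1$ in $\tLeb{1}^z\hbes{\sigma}{p,q}$, and $\nabla\delta\mathcal{P}$ is needed only in $L_\infty^{z,t,x}$. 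That $L_\infty$ bound follows from Bernstein as in the computation before \eqref{eq:Qa-v-Qb-v}, via $\norm{\delta f}{\hbes{d/p+1}{p,1}}$. We must then check that $\hbes{d/p+1}{p,1}$ is dominated by the $\hbes{\sigma}{p,q}$ norm of $\delta f$ appearing in the statement. If this only works with the inhomogeneous norm at low frequencies, we will instead interpolate with $\norm{\delta f}{\hbes{1}{p,1}}$-type quantities controlled through the smallness of $f_i$, and accept an inhomogeneous $\delta f$ norm if forced.

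For $\delta Q_b$ we expand similarly:
\[
\delta Q_b=(|\nabla|\delta v+\delta w+\delta Q_a)\nabla\mathcal{P}_1+(|\nabla|v_2+w_2+Q_a^{(2)})\nabla\delta\mathcal{P}-\partial_z\delta\mathcal{P}\,\nabla v_1-\partial_z\mathcal{P}_2\,\nabla\delta v.
\]
The middle terms give the $\delta f$ contribution summed over $i=1,2$. The terms containing $\delta v,\delta w$ multiplied by $\nabla\mathcal{P}_1$ or $\partial_z\mathcal{P}_2$ produce the factor $(\norm{f_1}{}+\norm{f_2}{})$, with no $\mathcal{F}$ since they are bilinear. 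The term $\delta Q_a\nabla\mathcal{P}_1$ is handled by the already proved $\delta Q_a$ bound times $\norm{\nabla\mathcal{P}_1}{L_\infty}\apprle\varepsilon_0$, and gives the last line of the statement. For the term with $Q_a^{(2)}$ we additionally need $\norm{Q_a^{(2)}}{L_\infty}$, which is immediate from the definition.

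The main obstacle is the smooth nonlinear dependence through $1/(1+\mathcal{B})$. Theorem \ref{thm:Moser-estimates}(ii) requires the arguments to stay in a region where $1+\mathcal{B}$ is bounded below, and this is where $\varepsilon_0$ enters. The difference estimate must also be applied in the mixed Chemin--Lerner norm $\tLeb{1}^z\tLeb{\infty}^t$ rather than a plain Besov norm. We will state and use that the Moser difference estimate holds there, with the $\tLeb{1}^z$ factor always placed on a single $\mathcal{P}$-type factor and the other factors taken in $L_\infty^{z,t,x}$. This mirrors the proof of Lemma \ref{lem:Qa-Qb-control}, which we reproduce termwise.
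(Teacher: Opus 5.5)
Your proposal is correct and follows essentially the paper's argument: the same splitting $\delta Q_a=(\delta B)\nabla v_1+B_2\,\delta\nabla v-(\delta E)(w_1+|\nabla|v_1)-E_2(\delta w+|\nabla|\delta v)$ (your ``linearity in $(w,v)$'' split), the identical expansion of $\delta Q_b$, and all estimates done through the $Z^\sigma_{p,q}$ product rule and Moser-type bounds. One difference is that the paper handles $1/(1+\mathcal{B})$ and $\mathcal{B}/(1+\mathcal{B})$ via $G_1(x)=(1+|x|^2)^{-1}$, which sidesteps the fact that Moser (ii) as stated needs $F'(0)=0$. Your caveat about the $L_\infty$ bound on $\nabla\delta\mathcal{P}$ is well founded: the paper's own Lemma \ref{lem:delta-B-delta-E-estimates} carries $\norm{\delta f}{\tLeb{\infty}^t\hbes{\sigma}{p,q}\cap\tLeb{\infty}^t\hbes{d/p+1}{p,1}}$, and this is harmless downstream because both norms are controlled by $\norm{\delta f}{\tLeb{\infty}^t\bes{s}{p,q}}$ in \eqref{eq:contraction-DN}.
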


\noindent\emph{Proof of Theorem \ref{thm:first-order-expansion}}. Define 
\[ (w_0,v_0)=(0,e^{z|\nabla|}g)\]
and 
\begin{equation*}
\left\{
\begin{aligned}
w_{n+1}&=\Pi_f^1(w_n,v_n)\\
v_{n+1}&=e^{z|\nabla|}g+\Pi_f^2(w_n,v_n)
\end{aligned}
\right.
\end{equation*}
for $n\geq 0$. By induction and \eqref{eq:Pi1-mapping-Ysigma}, we have $(w_n,\nabla v_n)\in {Y}^\sigma\times {Y}^\sigma$. Set 
$$
\left\{\begin{aligned}
	\delta w_n&=w_n-w_{n-1},\\
	\delta v_n&=v_n-v_{n-1},\\
	\delta Q_a&=Q_a[w_n,v_n;f]-Q_a[w_{n-1},v_{n-1};f],\\
	\delta Q_b&=Q_b[w_n,v_n;f]-Q_b[w_{n-1},v_{n-1};f].
\end{aligned}
\right.
$$
Then 
\begin{align*}
&\relphantom{=}\Pi_f^1(w_{n},v_{n})(x,t,z)-\Pi_f^1(w_{n-1},v_{n-1})(x,t,z)\\
&=\int_{-\infty}^z e^{-(z-\tau)|\nabla|} (\Div_x \delta Q_b[w,v;f](x,t,\tau)-|\nabla|\delta Q_a[w,v;f])(x,t,\tau)d\tau,\\
&\relphantom{=}\Pi_f^2(w_n,v_n)(x,t,z)-\Pi_f^2(w_{n-1},v_{n-1})(x,t,z)\\
&=-\int_{z}^0 e^{-(z-\tau)|\nabla|} (\delta w_n+\delta Q_a)(x,t,\tau)d\tau.
\end{align*}
By \eqref{eq:Pif-estimates}, \eqref{eq:Pif-estimates-inhomogeneous}, and Lemma \ref{lem:difference-estimate} with $f=f_1=f_2$, we have
and 
\begin{align*}
\norm{\delta w_{n+1}}{\dot{Y}^{\sigma_0}}&\leq \mathcal{F}(\norm{f}{\tLeb{\infty}^t\bes{s}{p,q}})\norm{f}{\tLeb{\infty}^t\bes{s}{p,q}}(\norm{\delta w_n}{\dot{Y}^{\sigma_0}}+\norm{\nabla(\delta v_n)}{\dot{Y}^{\sigma_0}}),\\
\norm{\nabla(\delta v_{n+1})}{\dot{Y}^{\sigma_0}}&\leq \mathcal{F}(\norm{f}{\tLeb{\infty}^t\bes{s}{p,q}})\norm{f}{\tLeb{\infty}^t\bes{s}{p,q}}(\norm{\delta w_n}{\dot{Y}^{\sigma_0}}+\norm{\nabla (\delta v_n)}{\dot{Y}^{\sigma_0}})+\norm{\delta w_n}{\dot{Y}^{\sigma_0}}.
\end{align*}
If we set $a_n=\norm{\delta w_n}{\dot{Y}^{\sigma_0}}$ and $b_n=\norm{\nabla (\delta v_n)}{\dot{Y}^{\sigma_0}}$, then we have 
\begin{align*}
a_{n+1}&\leq \mathcal{F}(\norm{f}{\tLeb{\infty}^t\bes{s}{p,q}})\norm{f}{\tLeb{\infty}^t\bes{s}{p,q}}(a_n+b_n),\\
b_{n+1}&\leq \mathcal{F}(\norm{f}{\tLeb{\infty}^t\bes{s}{p,q}})\norm{f}{\tLeb{\infty}^t\bes{s}{p,q}}(a_n+b_n)+\mathcal{F}(\norm{f}{\tLeb{\infty}^t\bes{s}{p,q}})a_n.
\end{align*}
In other words, we have 
\[
\begin{bmatrix}
a_{n+1} \\
b_{n+1}
\end{bmatrix}
\leq
\begin{bmatrix}
\mathcal{F}(\norm{f}{\tLeb{\infty}^t\bes{s}{p,q}})\norm{f}{\tLeb{\infty}^t\bes{s}{p,q}}& \mathcal{F}(\norm{f}{\tLeb{\infty}^t\bes{s}{p,q}})\norm{f}{\tLeb{\infty}^t\bes{s}{p,q}} \\
\mathcal{F}(\norm{f}{\tLeb{\infty}^t\bes{s}{p,q}})(\norm{f}{\tLeb{\infty}^t\bes{s}{p,q}}+1) & \mathcal{F}(\norm{f}{\tLeb{\infty}^t\bes{s}{p,q}})\norm{f}{\tLeb{\infty}^t\bes{s}{p,q}}
\end{bmatrix}
\begin{bmatrix}
a_n \\
b_n
\end{bmatrix}
\]
for all $n$. Similarly, if $a_n=\norm{\delta w_n}{Y^\sigma}$ and $b_n=\norm{\nabla(\delta v_n)}{Y^\sigma}$, then we get the same inequality. 

Choose $\varepsilon_0>0$ sufficiently small so that $\mathcal{F}(\norm{f}{\tLeb{\infty}^t\bes{s}{p,q}})\leq C$ and $C\left(\varepsilon_0+\sqrt{\varepsilon_0+\varepsilon_0^2}\right)\leq 1/4$. Then the spectral radius of 
\[ C\begin{bmatrix}
\varepsilon_0 & \varepsilon_0\\
1+\varepsilon_0 & \varepsilon_0
\end{bmatrix}
\]
is less than $1/4$, which implies that $\{(w_n,\nabla v_n)\}$ converges in $Y^\sigma\times Y^\sigma$. Hence, by \eqref{eq:Pi1-mapping-Ysigma} and Lemma \ref{lem:difference-estimate}, there exists a unique $(w,v)\in Y^\sigma\times (Y^\sigma/\mathbb{R})$ satisfying
\[  w=\Pi_f^1(w,v)\quad \text{and}\quad v= e^{z|\nabla|}g +\Pi_f^2(w,v). \] 
Moreover, it follows from \eqref{eq:Pi1-mapping-Ysigma} that 
\begin{align*}
\norm{w}{{Y}^\sigma}&\leq C\varepsilon_0(\norm{w}{{Y}^\sigma}+\norm{\nabla v}{{Y}^\sigma}),\\
\norm{\nabla v}{{Y}^\sigma}&\leq C_0 \norm{g}{\tLeb{\infty}^t\bes{\sigma}{p,q}}+C\varepsilon_0(\norm{w}{{Y}^\sigma}+\norm{\nabla v}{Y^\sigma})+\norm{w}{Y^\sigma}\\
&\leq C_0 \norm{g}{\tLeb{\infty}^t\bes{\sigma}{p,q}}+2C\varepsilon_0(\norm{w}{{Y}^\sigma}+\norm{\nabla v}{{Y}^\sigma})
\end{align*}
for some constant $C_0>0$.  Since $C\varepsilon_0\leq 1/4$, it follows that
\begin{equation}\label{eq:w-v-sum-estimate}
\norm{w}{Y^\sigma}+\norm{\nabla v}{Y^\sigma}\leq 4C_0 \norm{g}{\tLeb{\infty}^t\bes{\sigma}{p,q}}.
\end{equation}
Similarly, it follows from \eqref{eq:Pi1-mapping-Ydotsigma} and \eqref{eq:g-lifting-homogeneous} that $(w,v)$ satisfies
\begin{align*}
\norm{w}{\dot{Y}^{\sigma_0}}&\leq C\varepsilon_0(\norm{w}{\dot{Y}^{\sigma_0}}+\norm{\nabla v}{\dot{Y}^{\sigma_0}}),\\
\norm{\nabla v}{\dot{Y}^{\sigma_0}}&\leq C_0(\norm{g}{\tLeb{\infty}^t\hbes{d/p+1}{p,1}}+\norm{g}{\tLeb{\infty}^t\hbes{\sigma_0}{p,q}})+C\varepsilon_0(\norm{w}{\dot{Y}^{\sigma_0}}+\norm{\nabla v}{\dot{Y}^{\sigma_0}})+\norm{w}{\dot{Y}^{\sigma_0}}\\
&\leq C_0(\norm{g}{\tLeb{\infty}^t\hbes{d/p+1}{p,1}}+\norm{g}{\tLeb{\infty}^t\hbes{\sigma_0}{p,q}})+2C\varepsilon_0(\norm{w}{\dot{Y}^{\sigma_0}}+\norm{\nabla v}{\dot{Y}^{\sigma_0}}).
\end{align*}

Hence, by \eqref{eq:Pi1-mapping-Ysigma} and \eqref{eq:w-v-sum-estimate}, we get 
\begin{equation}\label{eq:w-estimate-fixed-point} 
\begin{aligned}
 \norm{w}{Y^\sigma}&=\norm{\Pi_f^1(w,v)}{Y^\sigma}\leq \mathcal{F}(\norm{f}{\tLeb{\infty}^t\bes{s}{p,q}})\norm{f}{\tLeb{\infty}^t\bes{s}{p,q}}\norm{g}{\tLeb{\infty}^t\bes{\sigma}{p,q}}\\
 \norm{\nabla v}{Y^\sigma}&\apprle \norm{g}{\tLeb{\infty}^t\bes{\sigma}{p,q}}+ \mathcal{F}(\norm{f}{\tLeb{\infty}^t\bes{s}{p,q}})\norm{f}{\tLeb{\infty}^t\bes{s}{p,q}}\norm{g}{\tLeb{\infty}^t\bes{\sigma}{p,q}}\\
 \norm{w}{\dot{Y}^{\sigma_0}}&=\norm{\Pi_f^1(w,v)}{\dot{Y}^{\sigma_0}}\leq \mathcal{F}(\norm{f}{\tLeb{\infty}^t\bes{s}{p,q}})\norm{f}{\tLeb{\infty}^t\bes{s}{p,q}}(\norm{g}{\tLeb{\infty}^t\hbes{d/p+1}{p,1}}+\norm{g}{\tLeb{\infty}^t\hbes{\sigma_0}{p,q}})\\
 \norm{\nabla v}{\dot{Y}^{\sigma_0}}&\apprle \norm{g}{\tLeb{\infty}^t\hbes{d/p+1}{p,1}}+\norm{g}{\tLeb{\infty}^t\hbes{\sigma_0}{p,q}}\\
 &\relphantom{=}+ \mathcal{F}(\norm{f}{\tLeb{\infty}^t\bes{s}{p,q}})\norm{f}{\tLeb{\infty}^t\bes{s}{p,q}}(\norm{g}{\tLeb{\infty}^t\hbes{d/p+1}{p,1}}+\norm{g}{\tLeb{\infty}^t\hbes{\sigma_0}{p,q}}).
 \end{aligned}
\end{equation}

Since $w,\nabla v\in Y^\sigma$, it follows from real interpolation theorem that $w\in \tLeb{q}^z\tLeb{\infty}^t\bes{\sigma-1+1/q}{p,q}$. See Theorem \ref{thm:CL-interpolation}.
Also, it follows from Lemma \ref{lem:Qa-Qb-control} that 
\[ \partial_z w = -|\nabla| w -|\nabla|Q_a+\Div_x Q_b \in \tLeb{q}^z\tLeb{\infty}^t([-D,0]\times J;\bes{\sigma-2+1/q}{p,q}).\]

Since $w\in \Leb{q}^z([-D,0];\tLeb{\infty}^t(J;\bes{\sigma-1+1/q}{p,q})) \cap \Sob{1}{q}([-D,0];\tLeb{\infty}^t(J;\bes{\sigma-2+1/q}{p,q}))$, it follows from Theorems \ref{thm:Lions-Peetre} and \ref{thm:CL-interpolation} that
\[ w \in C([-D,0];\tLeb{\infty}^t(J;\bes{\sigma-1}{p,q})).\]
Since $\sigma>1$, it follows from \eqref{eq:w-estimate-fixed-point} that
\begin{equation*}
G(f)g=|\nabla|g+w|_{z=0},
\end{equation*}
and 
\begin{equation*} 
\begin{aligned}
 \norm{R(f;g)}{\tLeb{\infty}^t\bes{\sigma-1}{p,q}}&\leq \mathcal{F}(\norm{f}{\tLeb{\infty}^t\bes{s}{p,q}})\norm{f}{\tLeb{\infty}^t\bes{s}{p,q}}\norm{g}{\tLeb{\infty}^t\bes{\sigma}{p,q}},\\
 \norm{R(f;g)}{\tLeb{\infty}^t\hbes{\sigma_0-1}{p,q}}&\leq \mathcal{F}(\norm{f}{\tLeb{\infty}^t\bes{s}{p,q}})\norm{f}{\tLeb{\infty}^t\bes{s}{p,q}}(\norm{g}{\tLeb{\infty}^t\hbes{d/p+1}{p,1}}+\norm{g}{\tLeb{\infty}^t\hbes{\sigma_0}{p,q}}).
\end{aligned}
\end{equation*}
This proves the first estimate in \eqref{eq:estimates-remainder-DN}.

\subsection{Contraction estimates}\label{subsec:contraction-DN}
Next, we show the contraction estimate \eqref{eq:contraction-DN}. For $f_1,f_2$ satisfying \eqref{eq:smallness-DN}, we note that by \eqref{eq:DN-remainder-expression}, we have
\[ G(f_1)g-G(f_2)g=w_1|_{z=0}-w_2|_{z=0},\]
where $(w_j,v_j)$ are the unique fixed point of 
$$
\left\{
\begin{aligned}
	w_j(x,t,z)&=\Pi_{f_j}^1(w_j,v_j)(x,t,z),\\
	v_j(x,t,z)&=e^{z|\nabla|}g(x,t)+\Pi_{f_j}^2(w_j,v_j)(x,t,z),\quad j=1,2.
\end{aligned}
\right.
$$

If we write $\delta w =w_1-w_2$ and $\delta v = v_1-v_2$, then 
\begin{align*}
\delta w(x,t,z)&=\int_{-\infty}^z e^{-(z-\tau)|\nabla|} (\Div_x \delta Q_b-|\nabla|\delta Q_a)(x,t,\tau)d\tau,\\
\delta v(x,t,z)&=-\int_z^0 e^{(z-\tau)|\nabla|} \{\delta w + \delta Q_a\}(x,t,\tau)d\tau,
\end{align*}
where 
\begin{align*}
\delta Q_a &= Q_a[w_1,v_1;f_1]-Q_a[w_2,v_2;f_2],\\
\delta Q_b&=Q_b[w_1,v_1;f_1]-Q_b[w_2,v_2;f_2].
\end{align*}

By \eqref{eq:Pif-estimates} and \eqref{eq:g-estimate}, we have 
\begin{align*}
\norm{\delta w}{Y^\sigma}&\apprle \norm{\delta Q_a}{\tilde{L}^z_1\tLeb{\infty}^t\hbes{1}{p,1}}+\norm{\delta Q_a}{\tilde{L}^z_1\tLeb{\infty}^t\hbes{\sigma}{p,q}}+\norm{\delta Q_a}{\tilde{L}^z_1\tLeb{\infty}^t\hbes{d/p+1}{p,1}}\\
&\relphantom{=}+\norm{\delta Q_b}{\tilde{L}^z_1\tLeb{\infty}^t\hbes{1}{p,1}}+\norm{\delta Q_b}{\tilde{L}^z_1\tLeb{\infty}^t\hbes{\sigma}{p,q}}+\norm{\delta Q_b}{\tilde{L}^z_1\tLeb{\infty}^t\hbes{d/p+1}{p,1}}\\
\norm{\nabla_x (\delta v)}{Y^\sigma}&\apprle \norm{\delta Q_a}{\tilde{L}^z_1\tLeb{\infty}^t\hbes{1}{p,1}}+\norm{\delta Q_a}{\tilde{L}^z_1\tLeb{\infty}^t\hbes{\sigma}{p,q}}+\norm{\delta Q_a}{\tilde{L}^z_1\tLeb{\infty}^t\hbes{d/p+1}{p,1}}\\
&\relphantom{=}+\norm{\delta w}{\tilde{L}^z_1\tLeb{\infty}^t\hbes{1}{p,1}}+\norm{\delta w}{\tilde{L}^z_1\tLeb{\infty}^t\hbes{\sigma}{p,1}}+\norm{\delta w}{\tilde{L}^z_1\tLeb{\infty}^t\hbes{d/p+1}{p,1}}.
\end{align*} 

Then by \eqref{eq:w-v-sum-estimate} and Lemma \ref{lem:difference-estimate}, we get 
\begin{equation}\label{eq:difference-estimate-3}
\begin{aligned}
\norm{\delta w}{{Y}^\sigma}+\norm{\nabla_x (\delta v)}{{Y}^\sigma}&\leq \mathcal{F}(\norm{f_1}{\tLeb{\infty}^t\bes{s}{p,q}},\norm{f_2}{\tLeb{\infty}^t\bes{s}{p,q}})\norm{\delta f}{\tLeb{\infty}^t\bes{s}{p,q}}\norm{g}{\tLeb{\infty}^t\bes{\sigma}{p,q}}\\
&\relphantom{=}+\mathcal{F}(\norm{f_1}{\tLeb{\infty}^t\bes{s}{p,q}})\norm{f_1}{\tLeb{\infty}^t\bes{s}{p,q}}(\norm{\delta w}{{Y}^\sigma}+\norm{\nabla_x(\delta v)}{{Y}^\sigma})\\
&\relphantom{=}+\mathcal{F}(\norm{f_2}{\tLeb{\infty}^t\bes{s}{p,q}})\norm{f_2}{\tLeb{\infty}^t\bes{s}{p,q}}(\norm{\delta w}{{Y}^\sigma}+\norm{\nabla_x(\delta v)}{{Y}^\sigma}).
\end{aligned}
\end{equation}
Hence, it follows from  \eqref{eq:difference-estimate-3} that if we choose $\varepsilon_0>0$ sufficiently small, then we have
\begin{equation*}\label{eq:contraction-estimate-in-the-proof}
\norm{\delta w}{{Y}^\sigma}\leq \mathcal{F}(\norm{f_1}{\tLeb{\infty}^t\bes{s}{p,q}},\norm{f_2}{\tLeb{\infty}^t\bes{s}{p,q}})\norm{\delta f}{\tLeb{\infty}^t\bes{s}{p,q}}\norm{g}{\tLeb{\infty}^t\bes{\sigma}{p,q}}.
\end{equation*}
This completes the proof of Theorem \ref{thm:first-order-expansion}. \hfill \qed

\section{Global well-posedness of the one-phase Muskat problem}\label{sec:GWP}

In this section, we prove the main theorems, Theorems \ref{thm:A} and \ref{thm:B}. Recall that the Muskat problem can be rewritten as 
\begin{equation}\label{eq:Muskat-reform-revisit}
 \partial_t f + \frac{\kappa}{\mu}G(f)(\rho\mathfrak{g}f+\mathfrak{s}H(f))=0.
\end{equation}
We only prove the case $\mathfrak{s}>0$. The case for $\mathfrak{s}=0$ is similar and in fact, is simpler. Since we are not interested in the dependence of the parameter $\kappa$, $\mu$, $\rho$, $\mathfrak{g}$, and $\mathfrak{s}$, from now on, we assume that those parameters are $1$.

To show the existence of the solution, we introduce the Fourier truncation operators $\mathcal{S}_R$ ($R>0$) defined by 
\[ \widehat{\mathcal{S}_Rf}(\xi)=\chi_{B_R}(\xi)\hat{f}(\xi),\quad f\in \Leb{2},\]
where $\chi_{B_R}$ is the characteristic function of $B_R$. The following properties hold for these truncation operators of which proof is omitted.
\begin{lemma}\label{lem:truncation-Fourier}
Let $0\leq s<t$.
\begin{enumerate}[label=\textnormal{(\roman*)}]
\item If $f\in \Leb{2}$, then $\mathcal{S}_Rf\in \tSob{s}$ and $\norm{\mathcal{S}_R f}{\tSob{s}}\apprle (1+R)^s\norm{f}{\Leb{2}}$.
\item If $f\in \tSob{s}$, then $\mathcal{S}_Rf \rightarrow f$ in $\tSob{s}$ as $R\rightarrow\infty$.
\item If $f\in \tSob{t}$, then $\norm{\mathcal{S}_Rf-f}{\tSob{s}}\apprle (1+R)^{-(t-s)}\norm{f}{\tSob{t}}$.
\end{enumerate}
\end{lemma}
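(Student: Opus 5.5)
All three parts reduce to Plancherel, since $\mathcal{S}_R$ is the Fourier multiplier with symbol $\chi_{B_R}$. We work with the norm $\norm{f}{\tSob{s}}^2=\int_{\mathbb{R}^d}(1+|\xi|^2)^{s}|\hat f(\xi)|^2\myd{\xi}$, which is equivalent to the Besov definition $\bes{s}{2,2}$ by Proposition \ref{prop:Besov}(iv). Constants may therefore depend only on $s,t,d$. We also note that $\mathcal{S}_R$ commutes with Fourier multipliers and is a contraction on every $\tSob{s}$.

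For (i), on the support of $\chi_{B_R}$ we have $(1+|\xi|^2)^{s}\le (1+R^2)^{s}\le (1+R)^{2s}$, using $s\ge0$. Hence
\[ \norm{\mathcal{S}_Rf}{\tSob{s}}^2=\int_{|\xi|<R}(1+|\xi|^2)^{s}|\hat f|^2\myd{\xi}\le (1+R)^{2s}\norm{f}{\Leb{2}}^2, \]
and the first claim follows from Plancherel.

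For (ii), write
\[ \norm{\mathcal{S}_Rf-f}{\tSob{s}}^2=\int_{|\xi|\ge R}(1+|\xi|^2)^{s}|\hat f(\xi)|^2\myd{\xi}. \]
The integrand is dominated by $(1+|\xi|^2)^s|\hat f|^2\in \Leb{1}$ and tends to zero pointwise as $R\to\infty$. By dominated convergence the integral tends to zero.

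For (iii), on the region $|\xi|\ge R$ we have
\[ (1+|\xi|^2)^{s}=(1+|\xi|^2)^{t}(1+|\xi|^2)^{-(t-s)}\le (1+R^2)^{-(t-s)}(1+|\xi|^2)^{t}. \]
The power $-(t-s)$ is negative, so $(1+|\xi|^2)^{-(t-s)}$ is maximized at the smallest admissible $|\xi|$, namely $|\xi|=R$. Since $(1+R^2)\ge \tfrac12(1+R)^2$, this gives $\norm{\mathcal{S}_Rf-f}{\tSob{s}}\apprle (1+R)^{-(t-s)}\norm{f}{\tSob{t}}$.

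None of these steps is a real obstacle. The only point needing care is the equivalence of the Fourier-weight norm with the Littlewood–Paley norm used in the paper, which is standard and only affects the implicit constants.
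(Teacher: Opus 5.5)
Your proof is correct and complete. The paper omits the proof of this lemma, and your Plancherel argument with the Fourier-weight norm (equivalent to the $B^{s}_{2,2}$ norm used in the paper) is exactly the standard verification the authors implicitly rely on.
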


For $R>0$, we denote by $V_R$ the space of all functions $f\in\Leb{2}$ such that $\supp\hat{f}\subset\overline{B_R}$. Note that $V_R$ is a closed subspace of $\Leb{2}$. Since $\mathcal{S}_Rf=f$ for $f\in V_R$, it follows from Lemma \ref{lem:truncation-Fourier} that $V_R$ is continuously embedded into $\tSob{s}$ for any $s\geq 0$. 

Let $0<R<\infty$ be fixed. For $f\in \Leb{2}$, we define 
\[ \mathcal{G}_R(f)=-\mathcal{S}_R[G(\mathcal{S}_R f)(\mathcal{S}_R f+H(\mathcal{S}_R f))].\]
By Lemma \ref{lem:truncation-Fourier} and Proposition \ref{prop:continuity-DN}, the operator $\mathcal{G}_R$ is a well defined mapping from $\Leb{2}$ to $V_R$.  

 Let us consider the following Cauchy problem on an ODE in the Hilbert space $V_R$:
\begin{equation}\label{eq:ODE-hilbert}
\left\{
\begin{aligned}
\partial_t f_R(t)&=\mathcal{G}_R(f_R(t))\\
f_R(0)&=\mathcal{S}_Rf_0.
\end{aligned}
\right.
\end{equation}

If we write 
\[ H_1(p)=(1+|p|^2)^{-1/2}-1,\]
then 
\begin{equation}\label{eq:mean-curvature-rewritten}
H(f)=-\Delta f -\Div(\nabla f H_1(\nabla f)).
\end{equation}
Thus, 
\begin{align*}
&\relphantom{=}H(\mathcal{S}_R f^1)-H(\mathcal{S}_R f^2)\\
&=-\Delta (\mathcal{S}_R f^1-\mathcal{S}_R f^2)-\Div(\nabla \mathcal{S}_R f^1 H_1(\nabla \mathcal{S}_R f^1))+\Div(\nabla \mathcal{S}_R f^2 H_1(\nabla \mathcal{S}_R f^2))\\
&=-\Delta \mathcal{S}_R \delta f-\Div(\nabla \mathcal{S}_R(\delta f) H_1(\nabla \mathcal{S}_R f^1))-\Div(\nabla \mathcal{S}_R f^2 (H_1(\nabla \mathcal{S}_R f^1)-H_1(\nabla \mathcal{S}_R f^2)).
\end{align*}
For $s>d/2+1$, it follows that
\begin{align*}
&\relphantom{=}\norm{H(\mathcal{S}_R f^1)-H(\mathcal{S}_R f^2)}{\tSob{s}}\\
&\apprle \norm{\mathcal{S}_R\delta f}{\tSob{s+2}}+\norm{\nabla \mathcal{S}_R (\delta f)}{\tSob{s+1}}\norm{H_1(\nabla \mathcal{S}_R (\delta f))}{\tSob{s+1}}\\
&\relphantom{=}+\norm{\nabla \mathcal{S}_R f^2}{\tSob{s+1}}\norm{H_1(\nabla \mathcal{S}_R f^1)-H_1(\nabla \mathcal{S}_R f^2)}{\tSob{s+1}}.
\intertext{Since $H_1(0)=0$ and $\nabla H_1(0)=0$, by Theorem \ref{thm:Moser-estimates} and Lemma \ref{lem:truncation-Fourier}, the right-hand side becomes }
&\apprle_R \mathcal{F}(\norm{f^1}{\Leb{2}},\norm{f^2}{\Leb{2}})\norm{\delta f}{\Leb{2}}.
\end{align*}

Hence, it follows from Proposition \ref{prop:continuity-DN} that 
\begin{align*}
\norm{\mathcal{G}_R(f^1)-\mathcal{G}_R(f^2)}{\Leb{2}}&\apprle\norm{G(\mathcal{S}_Rf^1)[\mathcal{S}_R \delta f+H(\mathcal{S}_Rf^1)-H(\mathcal{S}_Rf^2)]}{\Leb{2}}\\
&\relphantom{=}+\norm{[G(\mathcal{S}_R f^1)-G(\mathcal{S}_R f^2)](\mathcal{S}_R f^2+H(\mathcal{S}_R f^2))}{\Leb{2}}\\
&\apprle_R \mathcal{F}(\norm{\mathcal{S}_Rf^1}{\tSob{s}})\norm{\mathcal{S}_Rf^1}{\tSob{s}}(\norm{\mathcal{S}_R\delta f}{\tSob{s}}+\norm{H(\mathcal{S}_R f^1)-H(\mathcal{S}_R f^2)}{\tSob{s}})\\
&\relphantom{=}+\mathcal{F}(\norm{\mathcal{S}_Rf^1}{\tSob{s}},\norm{\mathcal{S}_Rf^2}{\tSob{s}})\norm{\mathcal{S}_Rf^2+H(\mathcal{S}_R f^2)}{\tSob{s}}\norm{\mathcal{S}_R\delta f}{\tSob{s}}\\
&\apprle_R \mathcal{F}(\norm{f^1}{\Leb{2}},\norm{f^2}{\Leb{2}})\norm{\delta f}{\Leb{2}}.
\end{align*}
This implies that $\mathcal{G}_R$ is locally Lipschitz on $V_R$. Therefore, it follows from the Picard theorem for ODEs in infinite-dimensional spaces that for every $f_0\in \tSob{s}$, $s>d/2+1$, the $R$-truncated problem \eqref{eq:ODE-hilbert} has a unique local classical solution in $C^1([0,T_R);V_R)$. Here $0<T_R\leq \infty$ denotes the maximal existence time of the local solution $f_R$. 

Since $\mathcal{S}_R$ is self-adjoint, by Theorem \ref{thm:Lyapunov}, we have 
\begin{equation}\label{eq:Lyapunov-R-estimate}
 \frac{d}{dt}\frac{1}{2}\norm{f_R}{\Leb{2}}^2+\frac{1}{\mathcal{F}(\norm{f_R}{\tSob{s}})}\left(\norm{f_R}{\thSob{1/2}}^2+\norm{f_R}{\thSob{3/2}}^2\right)\leq 0.
\end{equation}
This implies that 
\begin{equation}\label{eq:L2-norm-Lyapunov-R}
 \norm{f_R(t)}{\Leb{2}}\leq \norm{f_R(0)}{\Leb{2}}\leq \norm{f_0}{\Leb{2}},
 \end{equation}
which shows $T_R=\infty$. 
 
\subsection{A priori estimates} 
Since $V_R \hookrightarrow \tSob{s}$ for any $s \geq 0$, it follows from Bernstein's inequality that $f\in \tLeb{\infty}(0,\infty;\tSob{s})$. We show that if $\norm{f_0}{\tSob{s}}$ is sufficiently small, then $\{f_R\}$ is uniformly bounded in $\tilde{L}_\infty(0,\infty;\tSob{s})$ and $\Leb{2}(0,\infty;\thSob{s+3/2})$.

\begin{proposition}\label{prop:a-priori-estimate}
Let $s>d/2+3$ and $R>0$. There exists $\varepsilon_0=\varepsilon_0(s,d)>0$ such that if $\norm{f_0}{\tSob{s}}<\varepsilon_0$, then any solution $f_R\in C^1([0,\infty);V_R)$ to \eqref{eq:ODE-hilbert} satisfies 
\begin{equation}\label{eq:a-priori-Ltilde-estimate}
 \norm{f_R}{\tilde{L}_\infty(0,\infty;\tSob{s})}\leq 2\norm{f_0}{\tSob{s}}
 \end{equation}
and
\begin{equation}\label{eq:a-priori-global-estimate}
 \norm{f_R(t)}{\tSob{s}}^2+\int_0^t \norm{f_R(\tau)}{\thSob{s+3/2}}^2d\tau\leq \norm{f_0}{\tSob{s}}^2
\end{equation}
for all $t\geq 0$.
\end{proposition}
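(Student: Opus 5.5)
We argue by a continuity (bootstrap) argument in time. Since $f_R\in C^1([0,\infty);V_R)$ and $V_R\hookrightarrow\tSob{\sigma}$ for every $\sigma$, the quantity
\[ E(T):=\norm{f_R}{\tilde{L}_\infty(0,T;\tSob{s})} \]
is finite and continuous in $T$. Moreover, $E(0)=\norm{\mathcal{S}_Rf_0}{\tSob{s}}\leq\norm{f_0}{\tSob{s}}$. Let $T^*$ be the supremum of times $T$ for which $E(T)\leq 2\norm{f_0}{\tSob{s}}$. The aim is to show that on $[0,T^*)$ the energy inequality \eqref{eq:a-priori-global-estimate} holds, and that this improves the bootstrap bound to something strictly smaller than $2\norm{f_0}{\tSob{s}}$. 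Continuity then forces $T^*=\infty$. On $[0,T^*)$ we have $\norm{f_R}{\tLeb{\infty}^t\bes{s}{2,2}}\leq 2\varepsilon_0$, so Theorem \ref{thm:first-order-expansion} applies with $p=q=2$ and $\sigma\leq s$, $\sigma>d/2+1$. This gives
\[ G(\mathcal{S}_Rf_R)g=|\nabla|g+\mathcal{R}(\mathcal{S}_Rf_R;g). \]

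\textbf{Energy identity.} Write the equation as
\[ \partial_t f_R+\mathcal{S}_R\big[|\nabla|(f_R-\Delta f_R)\big]=-\mathcal{S}_R\big[|\nabla|N(f_R)+\mathcal{R}(f_R;f_R+H(f_R))\big], \]
where $N(f)=-\Div(\nabla f H_1(\nabla f))$ and $\mathcal{S}_Rf_R=f_R$. We apply $P_j$ and pair with $2^{2js}P_jf_R$ in $\Leb{2}$. The linear part gives the dissipation $\norm{|\nabla|^{1/2}P_jf_R}{\Leb2}^2+\norm{|\nabla|^{3/2}P_jf_R}{\Leb2}^2$. We control the nonlinear terms in dual form, putting $s+3/2$ derivatives on one factor and $s-3/2$ on the other:
\[ |\langle |\nabla|N(f),|\nabla|^{2s}f\rangle|\leq\norm{N(f)}{\thSob{s-1/2}}\norm{f}{\thSob{s+3/2}}, \]
and similarly for $\mathcal{R}$. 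By Theorem \ref{thm:Moser-estimates}, and since $H_1$ vanishes to second order, $\norm{N(f)}{\tSob{s-1/2}}\apprle\norm{\nabla f}{\Leb\infty}\norm{f}{\tSob{s+3/2}}$ up to a factor $\mathcal{F}(\norm{f}{\tSob s})$. For the remainder we take $\sigma=s-1/2$ in \eqref{eq:estimates-remainder-DN}, which gives
\[ \norm{\mathcal{R}(f;f+H(f))}{\tSob{s-3/2}}\leq\mathcal{F}\,\norm{f}{\tSob s}\,\norm{f+H(f)}{\tSob{s-1/2}}\apprle\varepsilon_0\norm{f}{\tSob{s+3/2}}. \]
This loses two derivatives relative to $s$, which is why the hypothesis is $s>d/2+3$: we need $\sigma=s-1/2$ to remain admissible and to control $H(f)$ in $\tSob{s-1/2}$ from $f\in\tSob{s+3/2}$. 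Low frequencies of $f_R$ are handled by the $\Leb{2}$ Lyapunov bound \eqref{eq:L2-norm-Lyapunov-R} and the $\thSob{1/2}$ dissipation in \eqref{eq:Lyapunov-R-estimate}. Together these yield
\[ \tfrac12\tfrac{d}{dt}\norm{f_R}{\tSob s}^2+\norm{f_R}{\thSob{s+3/2}}^2\leq C\varepsilon_0\norm{f_R}{\thSob{s+3/2}}^2. \]
Absorbing the right-hand side for small $\varepsilon_0$ and integrating gives \eqref{eq:a-priori-global-estimate}, possibly with a harmless constant factor that can be normalized.

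\textbf{Closing the bootstrap.} The Chemin--Lerner bound \eqref{eq:a-priori-Ltilde-estimate} requires the frequency-localized version. For each $j$ we integrate the localized inequality in time and take $\sup_t$ before summing over $j$. The nonlinear contributions are then bounded by $\varepsilon_0\,\norm{f_R}{\tilde{L}_2(0,T;\thSob{s+3/2})}^2$. We control this by interpolating between $\tilde{L}_\infty\tSob s$ and $\tilde L_2\thSob{s+3/2}$ using Theorem \ref{thm:CL-interpolation}, together with the integrated estimate just obtained. This gives $E(T)^2\leq\norm{f_0}{\tSob s}^2(1+C\varepsilon_0)<4\norm{f_0}{\tSob s}^2$, which closes the bootstrap.

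I expect the main obstacle to be the remainder term $\mathcal{R}(f;H(f))$. Getting the right derivative count requires using the expansion in Chemin--Lerner norms uniformly in time, while also keeping the smallness constant independent of $R$ and $T$.
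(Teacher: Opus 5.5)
\textbf{There is a genuine gap in your energy step, at the Dirichlet--Neumann remainder.} You use the inhomogeneous bound (first line of \eqref{eq:estimates-remainder-DN}) with $\sigma=s-1/2$. That puts $\norm{f+H(f)}{\tSob{s-1/2}}$ on the right, and you then silently replace $\norm{f}{\tSob{s+3/2}}$ by $\norm{f}{\thSob{s+3/2}}$. On $\mathbb{R}^d$ this replacement is not allowed.

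The low-frequency part of $\norm{f+H(f)}{\tSob{s-1/2}}$ contains $\norm{P_{\leq 0}f}{\Leb{2}}$, coming from the gravity contribution $\mathcal{R}(f;f)$, and nothing dissipates it:
\begin{itemize}
\item Theorem \ref{thm:Lyapunov} dissipates only $\thSob{1/2}$ and $\thSob{3/2}$, and neither controls $\norm{P_{\leq 0}f}{\Leb{2}}$.
\item \eqref{eq:L2-norm-Lyapunov-R} makes $\norm{f_R}{\Leb{2}}$ bounded, but not square-integrable in time.
\end{itemize}
So you are left with a cross term $\varepsilon_0\norm{f_R}{\Leb{2}}\norm{f_R}{\thSob{s+3/2}}$. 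Young's inequality then leaves $\varepsilon_0^2\int_0^t\norm{f_R}{\Leb{2}}^2\,d\tau\lesssim\varepsilon_0^2\,t\,\norm{f_0}{\Leb{2}}^2$, which grows linearly in $t$. Hence the displayed inequality with only $C\varepsilon_0\norm{f_R}{\thSob{s+3/2}}^2$ on the right does not follow, and neither does any time-uniform bound. Your remark that low frequencies are handled by the Lyapunov functional does not repair this.

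\textbf{The fix is the homogeneous bound.} Use the second line of \eqref{eq:estimates-remainder-DN} with $\sigma_0=s-1/2$. It involves only $\norm{g}{\thSob{s-1/2}}+\norm{g}{\hbes{d/2+1}{2,1}}$ for $g=f_R+H(f_R)$. All the resulting norms of $f_R$ ($\hbes{d/2+1}{2,1}$, $\hbes{d/2+3}{2,1}$, $\thSob{s-1/2}$, $\thSob{s+1/2}$) are homogeneous with index strictly between $1/2$ and $s+3/2$. They therefore interpolate between the two dissipated norms, and the right-hand side becomes $\varepsilon_0\mathcal{F}\,(\norm{f_R}{\thSob{1/2}}^2+\norm{f_R}{\thSob{s+3/2}}^2)$. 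The $\thSob{1/2}$ piece must then be absorbed by the Lyapunov dissipation, whose coefficient is only $1/\mathcal{F}(\norm{f_R}{\tSob{s}})$. This is why the paper chooses $\varepsilon_0$ with $2\varepsilon_0(\mathcal{F}(2\varepsilon_0)^2+\mathcal{F}(2\varepsilon_0))\leq 1/2$.

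\textbf{Where $s>d/2+3$ actually enters.} Your account is off: admissibility of $\sigma=s-1/2$ needs only $s>d/2+3/2$. The paper uses $s>d/2+3$ to close the bootstrap \emph{first}, without any dissipation.
\begin{itemize}
\item The frequency-localized Duhamel bound gains three derivatives:
\[
\norm{f_R}{\tLeb{\infty}\thSob{s}}\leq\norm{f_R(0)}{\thSob{s}}+C\norm{\tilde{\mathcal{R}}(f_R)}{\tLeb{\infty}^t\thSob{s-3}}.
\]
\item Bounding $\mathcal{R}(f_R;H(f_R))$ in $\tLeb{\infty}^t\thSob{s-3}$ (homogeneous estimate with $\sigma_0=s-2$) requires $f_R\in\tLeb{\infty}^t\hbes{d/2+3}{2,1}$, which $\tLeb{\infty}^t\tSob{s}$ controls only when $s>d/2+3$.
\item This gives $\norm{f_R}{\tLeb{\infty}\tSob{s}}\leq\norm{f_0}{\tSob{s}}+\mathcal{F}\norm{f_R}{\tLeb{\infty}\tSob{s}}^2$, which closes the bootstrap. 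Only afterwards is the energy inequality derived.
\end{itemize}

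Your reverse ordering is a legitimate alternative once the energy step is repaired: energy inequality under the bootstrap hypothesis, then integrate the localized identity and take $\sup_t$ before summing in $j$. In that ordering the nonlinear contribution is bounded directly by $\varepsilon_0\int_0^T(\norm{f_R}{\thSob{1/2}}^2+\norm{f_R}{\thSob{s+3/2}}^2)\,d\tau\lesssim\varepsilon_0\norm{f_0}{\tSob{s}}^2$. Note that the $\thSob{1/2}$ term must appear there too. No Chemin--Lerner interpolation is needed, since $\tLeb{2}\thSob{s+3/2}=\Leb{2}\thSob{s+3/2}$.
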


\begin{proof}
Note that $f_R \in C^1([0,\infty);V_R)$ satisfies 
\begin{equation}
 \partial_t f_R + \mathcal{S}_RG(f_R)(f_R+H(f_R))=0,\quad f_R(0)=\mathcal{S}_Rf_0.
\end{equation}

Define 
\[ T^*= \sup \{ T>0 : \norm{f_R}{\tilde{L}_\infty([0,T];\tSob{s})}\leq 2\norm{f_0}{\tSob{s}}\}.\]
We claim that there exists $\varepsilon_0>0$ such that if $\norm{f_0}{\tSob{s}}<\varepsilon_0$, then $T^*=\infty$. We first choose $\varepsilon_0>0$ so small that we can apply Theorem \ref{thm:first-order-expansion} and \eqref{eq:mean-curvature-rewritten} to get 
\begin{equation}\label{eq:fR-equantion-linearlization}
\partial_t f_R +|\nabla|(f_R+|\nabla|^2 f_R)=\mathcal{S}_R\tilde{\mathcal{R}}(f_R),
\end{equation}
where 
\begin{equation}\label{eq:tilde-R-expression}
\tilde{\mathcal{R}}(f_R)=|\nabla|\Div(\nabla f_R H_1(\nabla f_R))-\mathcal{R}(f_R;f_R+H(f_R)).
\end{equation}

Set $A=|\nabla|(1+|\nabla|^2)$. Then by Bernstein's inequality, we have
\begin{equation*}
 \int_{\mathbb{R}^d} A P_j f_R \cdot P_j f_R \myd{x}\geq c_1 (2^j+2^{3j})\norm{P_j f_R}{\Leb{2}}^2
\end{equation*}
for some constant $c_1>0$. Hence, it follows from \eqref{eq:fR-equantion-linearlization} that
\[ \frac{d}{dt} \frac{1}{2} \norm{P_j f_R}{\Leb{2}}^2+c_1(2^j +2^{3j})\norm{P_j f_R}{\Leb{2}}^2\leq \norm{P_j\tilde{\mathcal{R}}(f_R)}{\Leb{2}}\norm{P_j f_R}{\Leb{2}}.\]
Then by Gronwall's inequality, we get 
\begin{equation*}
 \norm{P_j f_R(t,\cdot)}{\Leb{2}}\leq e^{-c_1(2^j+2^{3j})t}\norm{P_j f_R(0)}{\Leb{2}}+\int_0^t \exp(-c_1(2^j+2^{3j})(t-\tau))\norm{P_j\tilde{\mathcal{R}}(f_R)(\tau)}{\Leb{2}}d\tau.
\end{equation*}
By taking supremum over $t$, we get 
\[ \norm{P_j f_R}{\Leb{\infty}([0,T];\Leb{2})}\leq \norm{P_j f_R(0)}{\Leb{2}}+\frac{C}{2^j+2^{3j}}\norm{P_j\tilde{\mathcal{R}}(f_R)}{\Leb{\infty}([0,T];\Leb{2})}\]
for some constant $C=C(d)>0$, which implies that 
\begin{equation}\label{eq:fR-tilde-estimate}
\norm{f_R}{\tLeb{\infty}\thSob{s}}\leq \norm{f_R(0)}{\thSob{s}}+C\norm{\tilde{\mathcal{R}}(f_R)}{\tLeb{\infty}^t\thSob{s-3}}.
\end{equation}

To estimate $\tilde{\mathcal{R}}(f_R)$, for the first part, it follows from Theorems \ref{thm:Besov-product} and \ref{thm:Moser-estimates} with $p=q=2$ that
\begin{equation}\label{eq:remainder-first-estimate}
\begin{aligned}
&\relphantom{=}\norm{|\nabla|\Div(\nabla f_R H_1(\nabla f_R))}{\tLeb{\infty}^t\thSob{s-3}}\\
&\leq \norm{\nabla f_R H_1(\nabla f_R)}{\tLeb{\infty}^t\thSob{s-1}}\\
&\apprle \norm{\nabla f_R}{\tLeb{\infty}^t\thSob{s-1}}\norm{H_1(\nabla f_R)}{\tLeb{\infty}^t\Leb{\infty}}+\norm{H_1(\nabla f_R)}{\tLeb{\infty}^t\thSob{s-1}}\norm{\nabla f_R}{\Leb{\infty}^{t,x}}\\
&\leq \mathcal{F}(\norm{ f_R}{\tLeb{\infty}^t\tSob{s}})\norm{ f_R}{\tLeb{\infty}^t\tSob{s}}\norm{\nabla f_R}{\tLeb{\infty}^t\thSob{s-1}\cap\tLeb{\infty}^t\hbes{d/2}{2,1}}.
\end{aligned}
\end{equation}
To estimate the second part, it follows from  Theorem \ref{thm:first-order-expansion} with $p=q=2$ with $\sigma_0=s-2$ that
\begin{equation}\label{eq:remainder-estimate-fR}
\norm{R(f_R;f_R+H(f_R))}{\tLeb{\infty}^t\thSob{s-3}}\leq \mathcal{F}(\norm{f_R}{\tLeb{\infty}^t\tSob{s}})\norm{f_R}{\tLeb{\infty}^t\tSob{s}}\norm{f_R+H(f_R)}{\tLeb{\infty}^t\hbes{d/2+1}{2,1}\cap\thSob{s-2}}.
\end{equation}

Since 
\[ H(f_R)=-\Delta f_R +\Div(\nabla f_R H_1(\nabla f_R)),\]
it follows that
\begin{equation}\label{eq:fR-estimate-remainder-1}
\begin{aligned}
\norm{f_R+H(f_R)}{\tLeb{\infty}^t\hbes{d/2+1}{2,1}\cap\tLeb{\infty}^t\thSob{s-2}}&\apprle \norm{f_R}{\tLeb{\infty}^t\hbes{d/2+1}{2,1}\cap\tLeb{\infty}^t\thSob{s-2}}+\norm{f_R}{\tLeb{\infty}^t\hbes{d/2+3}{2,1}\cap\tLeb{\infty}^t\thSob{s}}\\
&\relphantom{=}+\norm{\nabla f_R H_1(\nabla f_R)}{\tLeb{\infty}^t\hbes{d/2+2}{2,1}\cap\tLeb{\infty}^t\thSob{s-1}}.
\end{aligned}
\end{equation}
By Theorems \ref{thm:Besov-product} and \ref{thm:Moser-estimates} again, the last quantity is bounded by 
\begin{equation}\label{eq:fR-estimate-remainder-2}
\begin{aligned}
&\relphantom{=}\norm{\nabla f_R H_1(\nabla f_R)}{\tLeb{\infty}^t\hbes{d/2+2}{2,1}\cap\tLeb{\infty}^t\thSob{s-1}}\\
&\apprle \norm{\nabla f_R}{\Leb{\infty}^{t,x}}\norm{H_1(\nabla f_R)}{\tLeb{\infty}^t\hbes{d/2+2}{2,1}}+ \norm{\nabla f_R}{\tLeb{\infty}^t\hbes{d/2+2}{2,1}}\norm{H_1(\nabla f_R)}{\Leb{\infty}^{t,x}}\\
&\relphantom{=}+ \norm{\nabla f_R}{\Leb{\infty}^{t,x}}\norm{H_1(\nabla f_R)}{\tLeb{\infty}^t\thSob{s-1}}+ \norm{\nabla f_R}{\tLeb{\infty}^t\thSob{s-1}}\norm{H_1(\nabla f_R)}{\Leb{\infty}^{t,x}}\\
&\leq \mathcal{F}(\norm{f_R}{\tLeb{\infty}^t\tSob{s}})(\norm{f_R}{\tLeb{\infty}^t\hbes{d/2+3}{2,1}}+\norm{ f_R}{\tLeb{\infty}^t\thSob{s}}).
\end{aligned}
\end{equation}

Since $s>d/2+3$, it follows from the embedding $\tSob{s}\rightarrow B^{d/2+3}_{2,1}$, \eqref{eq:remainder-first-estimate}, \eqref{eq:remainder-estimate-fR},  \eqref{eq:fR-estimate-remainder-1}, and \eqref{eq:fR-estimate-remainder-2} that
\begin{equation}\label{eq:final-remainder-estimate}
\norm{\tilde{\mathcal{R}}(f_R)}{\tLeb{\infty}^t\thSob{s-3}}\leq \mathcal{F}(\norm{f_R}{\tLeb{\infty}^t\tSob{s}})\norm{f_R}{\tLeb{\infty}^t\tSob{s}}^2. 
\end{equation}
Therefore, by  \eqref{eq:L2-norm-Lyapunov-R}, \eqref{eq:fR-tilde-estimate}, and \eqref{eq:final-remainder-estimate}, we have
\begin{align*}
\norm{f_R}{\tilde{L}_\infty^t([0,T];\tSob{s})}&\leq \norm{f_R(0)}{\tSob{s}}+C\norm{\tilde{\mathcal{R}}(f_R)}{\tilde{L}_\infty^t([0,T];\tSob{s-3})}\\
&\leq \norm{f_0}{\tSob{s}}+\mathcal{F}(\norm{f_R}{\tilde{L}_\infty^t([0,T];\tSob{s})})\norm{f_R}{\tilde{L}_\infty^t([0,T];\tSob{s})}^2.
\end{align*} 

Since $g(T)=\norm{f_R}{\tilde{L}_\infty([0,T];\tSob{s})}$ is continuous, a standard bootstrap argument shows that there exists $\varepsilon_0>0$ such that if $\norm{f_0}{\tSob{s}}\leq \varepsilon_0$, then $T^*=\infty$ and 
\begin{equation}\label{eq:uniform-estimate-fR}
 \norm{f_R}{\tilde{L}_\infty([0,\infty);\tSob{s})}\leq 2\norm{f_0}{\tSob{s}}
 \end{equation}	
 for all $R>0$.

Take derivatives $|\nabla|^s$ to the equation \eqref{eq:ODE-hilbert}, multiply $|\nabla|^s f_R$, and integrate it over $\mathbb{R}^d$. Then we have
\begin{align*}
\frac{1}{2}\frac{d}{dt}\norm{f_R}{\thSob{s}}^2+\norm{f_R}{\thSob{s+1/2}}^2+\norm{f_R}{\thSob{s+3/2}}^2=\int_{\mathbb{R}^d} |\nabla|^{s-3/2} \tilde{\mathcal{R}}(f_R) |\nabla|^{s+3/2} f_R \myd{x}.
\end{align*}
Similar to \eqref{eq:remainder-first-estimate} and \eqref{eq:remainder-estimate-fR}, for fixed $t$, we have
\begin{equation}\label{eq:remainder-estimate-H-s-3/2}
\begin{aligned}
&\relphantom{=}\norm{\tilde{\mathcal{R}}(f_R)(t)}{\thSob{s-3/2}}\\
&\leq \mathcal{F}(\norm{ f_R(t)}{\tSob{s}})\norm{ f_R(t)}{\tSob{s}}(\norm{\nabla f_R(t)}{\hbes{d/2}{2,1}\cap\thSob{s-1/2}}+\norm{f_R(t)+H(f_R)(t)}{\hbes{d/2+1}{2,1}\cap\thSob{s-1/2}}).
\end{aligned}
\end{equation}
By a similar argument as in \eqref{eq:fR-estimate-remainder-1} and \eqref{eq:fR-estimate-remainder-2}, we get 
\begin{equation}\label{eq:fR-estimate-remainder-3}
\begin{aligned}
&\relphantom{=}\norm{f_R(t)+H(f_R)(t)}{\hbes{d/2+1}{2,1}\cap\thSob{s-1/2}}\\
&\apprle \norm{f_R(t)}{\hbes{d/2+1}{2,1}\cap \hbes{d/2+3}{2,1}}+\norm{f_R(t)}{\thSob{s-1/2}}+\norm{f_R(t)}{\thSob{s+3/2}}\\
&\relphantom{=}+\mathcal{F}(\norm{\nabla f_R(t)}{\Leb{\infty}})(\norm{f_R(t)}{\hbes{d/2+3}{2,1}}+\norm{ f_R(t)}{\thSob{s+3/2}}).
\end{aligned}
\end{equation}
Hence, by \eqref{eq:remainder-estimate-H-s-3/2} and \eqref{eq:fR-estimate-remainder-3}, we get the following differential inequality
\begin{equation}\label{eq:final-inequality-1}
\begin{aligned}
&\frac{d}{dt} \frac{1}{2}\norm{f_R}{\thSob{s}}^2+\norm{f_R}{\thSob{s+1/2}}^2+\norm{f_R}{\thSob{s+3/2}}^2\\
&\leq \mathcal{F}(\norm{f_R}{\tLeb{\infty}^t\tSob{s}})\norm{ f_R}{\tLeb{\infty}^t\tSob{s}}\norm{f_R}{\thSob{s+3/2}}^2\\
&\relphantom{=}+\mathcal{F}(\norm{ f_R}{\tLeb{\infty}^t\tSob{s}})\norm{ f_R}{\tLeb{\infty}^t\tSob{s}}(\norm{f_R}{\hbes{d/2+1}{2,1}\cap \hbes{d/2+3}{2,1}}+\norm{f_R}{\thSob{s-1/2}}+\norm{f_R}{\thSob{s+3/2}})\norm{f_R}{\thSob{s+3/2}}.
\end{aligned}
\end{equation}
By \eqref{eq:Lyapunov-R-estimate} and \eqref{eq:final-inequality-1}, we get 
\begin{equation}\label{eq:final-inequality-2}
\begin{aligned}
&\relphantom{=}\frac{d}{dt} \frac{1}{2}\norm{f_R}{\tSob{s}}^2+\frac{1}{\mathcal{F}(\norm{f_R}{\tSob{s}})}(\norm{f_R}{\thSob{1/2}}^2+\norm{f_R}{\thSob{3/2}}^2)+\norm{f_R}{\thSob{s+1/2}}^2+\norm{f_R}{\thSob{s+3/2}}^2\\
&\leq \mathcal{F}(\norm{ f_R}{\tLeb{\infty}^t\tSob{s}})\norm{ f_R}{\tLeb{\infty}^t\tSob{s}}\norm{f_R}{\thSob{s+3/2}}^2\\
&\relphantom{=}+\mathcal{F}(\norm{ f_R}{\tLeb{\infty}^t\tSob{s}})\norm{ f_R}{\tLeb{\infty}^t\tSob{s}}(\norm{f_R}{\hbes{d/2+1}{2,1}\cap \hbes{d/2+3}{2,1}}+\norm{f_R}{\thSob{s-1/2}}+\norm{f_R}{\thSob{s+3/2}})\norm{f_R}{\thSob{s+3/2}}.
\end{aligned}
\end{equation}

Since $(\thSob{1/2},\thSob{s+3/2})_{\theta,1}=\hbes{\frac{1-\theta}{2}+\left(s+\frac{3}{2}\right)\theta}{2,1}$, $0<\theta<1$, it follows from \eqref{eq:final-inequality-2} and Young's inequality that
\begin{align*}
&\relphantom{=}\frac{d}{dt} \frac{1}{2}\norm{f_R}{\tSob{s}}^2+\frac{1}{\mathcal{F}(\norm{f_R}{\tSob{s}})}\norm{f_R}{\thSob{1/2}}^2+\norm{f_R}{\thSob{s+3/2}}^2\\
&\leq \mathcal{F}(\norm{ f_R}{\tLeb{\infty}^t\tSob{s}})\norm{ f_R}{\tLeb{\infty}^t\tSob{s}}(\norm{f_R}{\thSob{s+3/2}}^2+\norm{f_R}{\thSob{1/2}}^2).
\end{align*}
Since $\mathcal{F}$ is nondecreasing, choose $\varepsilon_0>0$ so that 
\[ (\mathcal{F}(2\varepsilon_0)^2+\mathcal{F}(2\varepsilon_0))(2\varepsilon_0) \leq \frac{1}{2}.\]
Hence, by \eqref{eq:a-priori-Ltilde-estimate} and \eqref{eq:uniform-estimate-fR}, we have  
\begin{equation}\label{eq:half-control}
 \norm{f_R(t)}{\tSob{s}}^2+c\int_0^t \norm{f_R(\tau)}{\thSob{1/2}}^2d\tau+\int_0^t \norm{f_R(\tau)}{\thSob{s+3/2}}^2 d\tau \leq \norm{f_0}{\tSob{s}}^2.
\end{equation}
This completes the proof of Proposition \ref{prop:a-priori-estimate}.
\end{proof}

\subsection{Contraction estimates}
We show that the solution has continuous dependence on the initial data. 
\begin{proposition}\label{prop:contraction-solution}
Let $s>d/2+3$. Suppose that $f_i \in C([0,\infty);\tSob{s})\cap \tilde{L}_\infty(0,\infty;\tSob{s})$, $i=1,2$, are solutions of \eqref{eq:DtoN-formulation-surface-tension}. There exists $\varepsilon_0>0$ such that if 
\[ \norm{f_1}{\tilde{L}_\infty(0,\infty;\tSob{s})},\norm{f_2}{\tilde{L}_\infty(0,\infty;\tSob{s})}\leq 2\varepsilon_0,\]
then for any $T>0$, we have
\begin{equation*}
 \norm{f_1-f_2}{\tilde{L}_\infty(0,T;\tSob{s})}\leq C_0(s,d,T) \norm{(f_1-f_2)|_{t=0}}{\tSob{s}}.
\end{equation*}
\end{proposition}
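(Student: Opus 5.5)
We write $\delta f=f_1-f_2$ and subtract the two equations. Because of smallness, Theorem \ref{thm:first-order-expansion} applies to both $f_1$ and $f_2$, and as in \eqref{eq:fR-equantion-linearlization} we obtain
\[
\partial_t \delta f+|\nabla|(1+|\nabla|^2)\delta f=\tilde{\mathcal R}(f_1)-\tilde{\mathcal R}(f_2),
\]
where $\tilde{\mathcal R}$ is as in \eqref{eq:tilde-R-expression}. The plan is to repeat the frequency-localized argument that led to \eqref{eq:fR-tilde-estimate}, now on a finite interval $[0,T]$. Localizing with $P_j$ and applying Gronwall gives, for $j\ge0$,
\[
\norm{P_j\delta f}{\Leb{\infty}([0,T];\Leb2)}\le\norm{P_j\delta f(0)}{\Leb2}+\frac{C}{2^j+2^{3j}}\norm{P_j(\tilde{\mathcal R}(f_1)-\tilde{\mathcal R}(f_2))}{\Leb{\infty}([0,T];\Leb2)}.
\]
Summing in $\ell^2$ with weight $2^{2js}$ then yields
\[
\norm{\delta f}{\tLeb{\infty}\thSob{s}}\lesssim\norm{\delta f(0)}{\thSob s}+\norm{\tilde{\mathcal R}(f_1)-\tilde{\mathcal R}(f_2)}{\tLeb\infty^t\thSob{s-3}}.
\]
The low frequencies $P_{\le 0}$ cannot be handled by the $L_2$ Lyapunov functional, since that is not a difference estimate. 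Instead we control them directly: $\norm{P_{\le0}\delta f(t)}{\Leb2}\le\norm{P_{\le0}\delta f(0)}{\Leb2}+\int_0^t\norm{P_{\le 0}(\cdots)}{\Leb2}$, which produces a factor $T$. This is where the constant $C_0(s,d,T)$ comes from.

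The core of the proof is the difference bound
\[
\norm{\tilde{\mathcal R}(f_1)-\tilde{\mathcal R}(f_2)}{\tLeb\infty^t(\bes{s-3}{2,2})}\le \mathcal F(2\varepsilon_0)\,\varepsilon_0\,\norm{\delta f}{\tLeb\infty^t\tSob s}.
\]
We split $\tilde{\mathcal R}$ into its two parts.

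\begin{enumerate}
\item \emph{Curvature part} $|\nabla|\Div(\nabla f H_1(\nabla f))$. Write $\nabla f_1H_1(\nabla f_1)-\nabla f_2H_1(\nabla f_2)=\nabla\delta f\, H_1(\nabla f_1)+\nabla f_2(H_1(\nabla f_1)-H_1(\nabla f_2))$. Then apply Theorem \ref{thm:Besov-product} and Theorem \ref{thm:Moser-estimates}(ii), using $H_1(0)=\nabla H_1(0)=0$. This gives a bound in $\thSob{s-1}$ of the required small-times-$\delta f$ form.
\item \emph{Dirichlet--Neumann part}. Split
\[
\mathcal R(f_1;g_1)-\mathcal R(f_2;g_2)=[\mathcal R(f_1;g_1)-\mathcal R(f_2;g_1)]+\mathcal R(f_2;g_1-g_2),\qquad g_i=f_i+H(f_i).
\]
The second piece is linear in $g$, so \eqref{eq:estimates-remainder-DN} with $\sigma_0=s-2$ bounds it by $\varepsilon_0$ times $\norm{g_1-g_2}{\tLeb\infty^t\hbes{d/2+1}{2,1}\cap\thSob{s-2}}$. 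Since $s>d/2+3$, this is in turn bounded by $\mathcal F\norm{\delta f}{\tLeb\infty^t\tSob s}$, exactly as in \eqref{eq:fR-estimate-remainder-1}--\eqref{eq:fR-estimate-remainder-2} with Moser differences. The first piece is handled by the contraction estimate \eqref{eq:contraction-DN} with $\sigma=s-2>d/2+1$, giving $\mathcal F\norm{\delta f}{\tLeb\infty^t\tSob s}\norm{g_1}{\tLeb\infty^t\tSob{s-2}}\lesssim \mathcal F\,\varepsilon_0\norm{\delta f}{\tLeb\infty^t\tSob s}$. Here we use $\norm{g_1}{\tSob{s-2}}\lesssim\norm{f_1}{\tSob s}$.
\end{enumerate}

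This step is the main obstacle. The concern is that \eqref{eq:contraction-DN} measures the output in $\bes{\sigma-1}{}=\tSob{s-3}$, and one must check this matches the $\thSob{s-3}$ target after dividing by $2^{3j}$. It does: $s-2\ge\sigma_0$, and the inhomogeneous norm dominates the high frequencies. One must also check that the $\tLeb\infty^t$ structure passes through the product and Moser estimates, which holds by the remark after the definition of Chemin--Lerner spaces.

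Combining these bounds, we obtain
\[
\norm{\delta f}{\tLeb\infty(0,T;\tSob s)}\le C(1+T)\norm{\delta f(0)}{\tSob s}+C(1+T)\mathcal F(2\varepsilon_0)\varepsilon_0\norm{\delta f}{\tLeb\infty(0,T;\tSob s)}.
\]
The factor $(1+T)$ multiplying the small term comes only from the low-frequency part. To avoid making $\varepsilon_0$ depend on $T$, we treat the low frequencies separately: we split $[0,T]$ into $N\sim T$ subintervals of unit length, absorb the small term on each, and iterate. This gives $C_0(s,d,T)\sim C^{N}$, which depends on $T$ as the statement allows. The estimate remains finite because $\delta f\in\tLeb\infty(0,\infty;\tSob s)$ by hypothesis.
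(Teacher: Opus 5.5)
Your proposal is correct and follows essentially the same route as the paper: the same split of $\tilde{\mathcal R}(f_1)-\tilde{\mathcal R}(f_2)$ into a curvature part and a Dirichlet--Neumann part (the contraction estimate \eqref{eq:contraction-DN} plus linearity in $g$), the same frequency-localized Gronwall bound in $\thSob{s}$, a crude $\Leb{2}$-level bound that costs a factor $T$ (the paper multiplies the difference equation by $\delta f$ rather than projecting with $P_{\le 0}$), and the same splitting of $[0,T]$ into subintervals of length less than one, which keeps $\varepsilon_0$ independent of $T$. One small fix: for $\mathcal R(f_2;g_1-g_2)$ use the inhomogeneous first bound in \eqref{eq:estimates-remainder-DN} with $\sigma=s-2$, as the paper does, because the homogeneous bound with $\sigma_0=s-2$ only controls $\thSob{s-3}$ and so does not give the $\Leb{2}$ control of $P_{\le 0}\mathcal R(f_2;g_1-g_2)$ that your low-frequency step needs.
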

\begin{proof}
Recall that $A=|\nabla|(1+|\nabla|^2)$. Write $\tilde{f}=f_1-f_2$. Then by Theorem \ref{thm:first-order-expansion}, we get 
\begin{equation}\label{eq:difference-eqn}
\partial_t \tilde{f}+A\tilde{f}=\mathfrak{R}(f_1,f_2),
\end{equation}
where 
\begin{align*}
\mathfrak{R}(f_1,f_2)&=\tilde{\mathcal{R}}(f_2)-\tilde{\mathcal{R}}(f_1)\\
&=|\nabla|\Div(\nabla f_1 H_1(\nabla f_1)-\nabla f_2H_1(\nabla f_2))\\
&\relphantom{=}-\mathcal{R}(f_1;f_1+H(f_1))+\mathcal{R}(f_2;f_2+H(f_2))\\
&=:\mathrm{I}+\mathrm{II}.
\end{align*}

Rewrite $\mathrm{I}$ into
\[
\mathrm{I}=|\nabla|\Div(\nabla\tilde{f} H_1(\nabla f_1)+\nabla f_2 (H_1(\nabla f_1)-H_1(\nabla f_2))).
\]
Then it follows from Proposition \ref{prop:a-priori-estimate}, the Sobolev embedding theorem, and Theorem \ref{thm:Moser-estimates} that 
\begin{equation}\label{eq:estimate-I-part1}
\begin{aligned}
&\relphantom{=}\norm{\mathrm{I}}{\tilde{L}_\infty^t\tSob{s-3}}\\
&\apprle \norm{|\nabla|\Div(\nabla \tilde{f} H_1(\nabla f_1))}{\tilde{L}_\infty^t\tSob{s-3}}\\
&\relphantom{=}+\norm{|\nabla|\Div((\nabla f_2)(H_1(\nabla f_1)-H_1(\nabla f_2)))}{\tilde{L}_\infty^t\tSob{s-3}}\\
&\apprle \norm{\nabla \tilde{f} H_1(\nabla f_1)}{\tilde{L}_\infty^t\tSob{s-1}}+\norm{(\nabla f_2)(H_1(\nabla f_1)-H_1(\nabla f_2))}{\tilde{L}_\infty^t\tSob{s-1}}\\
&\apprle \norm{\tilde{f}}{\tilde{L}_\infty^t\tSob{s}}\norm{H_1(\nabla f_1)}{\tilde{L}_\infty^t\tSob{s-1}\cap {L}_\infty^{t,x}}+\norm{f_2}{\tilde{L}_\infty^t\tSob{s}}\norm{H_1(\nabla f_1)-H_1(\nabla f_2)}{\tilde{L}_\infty^t\tSob{s-1}\cap \Leb{\infty}^{t,x}}\\
&\leq \mathcal{F}(2\varepsilon_0)\varepsilon_0\norm{\tilde{f}}{\tilde{L}_\infty^t\tSob{s}}.
\end{aligned}
\end{equation}

To estimate $\mathrm{II}$, by Theorems \ref{thm:Besov-product} and \ref{thm:Moser-estimates}, we have
\begin{equation}\label{eq:mean-curvature-difference}
\begin{aligned}
&\relphantom{=}\norm*{H(f_1)-H(f_2)}{\tilde{L}_\infty^t\tSob{s-2}}\\
&\leq \norm{\Delta\tilde{f}}{\tilde{L}_\infty^t\tSob{s-2}}+\norm{\Div(\nabla\tilde{f} H_1(\nabla f_1))}{\tilde{L}_\infty^t\tSob{s-2}}+\norm{\Div[\nabla f_2(H_1(\nabla f_2)-H_1(\nabla f_1))]}{\tilde{L}_\infty^t\tSob{s-2}}\\
&\apprle \norm{\tilde{f}}{\tilde{L}_\infty^t\tSob{s}}+\mathcal{F}(\norm{f_1}{\tilde{L}_\infty^t\tSob{s}},\norm{f_2}{\tilde{L}_\infty^t\tSob{s}})(\norm{f_1}{\tilde{L}_\infty^t\tSob{s}}+\norm{f_2}{\tilde{L}_\infty^t\tSob{s}})\norm{\tilde{f}}{\tilde{L}_\infty^t\tSob{s}}.
\end{aligned}
\end{equation}

Hence, it follows from Proposition \ref{prop:a-priori-estimate}, \eqref{eq:contraction-DN}, and \eqref{eq:mean-curvature-difference} that 
\begin{align}\label{eq:estimate-II-part2}
\relphantom{=}\norm{\mathrm{II}}{\tilde{L}_\infty^t\tSob{s-3}}&\leq \norm{\mathcal{R}(f_1;f_1+H(f_1))-\mathcal{R}(f_2;f_1+H(f_1))}{\tilde{L}_\infty^t\tSob{s-3}}\\
&\relphantom{=}+ \norm{\mathcal{R}(f_2;f_1+H(f_1))-\mathcal{R}(f_2;f_2+H(f_2))}{\tilde{L}_\infty^t\tSob{s-3}}\nonumber\\
&\leq \mathcal{F}(\norm{f_1}{\tilde{L}_\infty^t\tSob{s}},\norm{f_2}{\tilde{L}_\infty^t\tSob{s}})\norm{\tilde{f}}{\tilde{L}_\infty^t\tSob{s}}\norm{f_1+H(f_1)}{\tilde{L}_\infty^t\tSob{s-2}}\nonumber\\
&\relphantom{=}+ \mathcal{F}(\norm{f_2}{\tilde{L}_\infty^t\tSob{s}})\norm{f_2}{\tilde{L}_\infty^t\tSob{s}}\norm{\tilde{f}+H(f_1)-H(f_2)}{\tilde{L}_\infty^t\tSob{s-2}}\nonumber\\
&\leq \mathcal{F}(\norm{f_1}{\tilde{L}_\infty^t\tSob{s}},\norm{f_2}{\tilde{L}_\infty^t\tSob{s}})\norm{f_1}{\tilde{L}_\infty^t\tSob{s}}\norm{\tilde{f}}{\tilde{L}_\infty^t\tSob{s}}\nonumber\\
&\relphantom{=}+\mathcal{F}(\norm{f_2}{\tilde{L}_\infty^t\tSob{s}})\norm{f_2}{\tilde{L}_\infty^t\tSob{s}}\norm{\tilde{f}}{\tilde{L}_\infty^t\tSob{s}}.\nonumber
\end{align}

Apply the Littlewood-Paley projection $P_j$ to \eqref{eq:difference-eqn} and we get 
\begin{equation*}
 \frac{d}{dt} \frac{1}{2}\norm{P_j\tilde{f}}{\Leb{2}}^2+c_1(2^j+2^{3j})\norm{P_j\tilde{f}}{\Leb{2}}^2\leq \norm{P_j\mathfrak{R}(f_1,f_2)}{\Leb{2}}\norm{P_j\tilde{f}}{\Leb{2}}. 
 \end{equation*}
 By Gronwall's inequality, we get  
 \begin{align*}
 \norm{P_j\tilde{f}(t,\cdot)}{\Leb{2}}&\leq e^{-c_1(2^j+2^{3j})t} \norm{P_j\tilde{f}(0,\cdot)}{\Leb{2}}+\int_0^t e^{-c_1(2^j+2^{3j})(t-\tau)} \norm{P_j\mathfrak{R}(f_1,f_2)(\tau)}{\Leb{2}}d\tau\\
 &\leq e^{-c_1(2^j+2^{3j})t} \norm{P_j\tilde{f}(0,\cdot)}{\Leb{2}}+\frac{C}{2^j+2^{3j}}\norm{P_j\mathfrak{R}(f_1,f_2)}{\Leb{\infty}([0,T];\Leb{2})},
  \end{align*}
 which implies that
\begin{equation}\label{eq:tilde-estimate-difference-1}
\norm{\tilde{f}}{\tilde{L}_\infty([0,T];\thSob{s})}\leq\norm{\tilde{f}(0)}{\thSob{s}}+C\norm{\mathfrak{R}(f_1,f_2)}{\tilde{L}_\infty^t\thSob{s-3}}
\end{equation}
for some constant $C>0$.

To estimate $\norm{\tilde{f}}{\Leb{\infty}([0,T];\Leb{2})}$, multiply the equation \eqref{eq:difference-eqn} by $\tilde{f}$ and take the integration over $\mathbb{R}^d$. Then 
\[ \frac{d}{dt} \frac{1}{2}\norm{\tilde{f}}{\Leb{2}}^2+\int_{\mathbb{R}^d} |A^{1/2} \tilde{f}|^2 \myd{x}=\int_{\mathbb{R}^d} \mathfrak{R}(f_1,f_2)\tilde{f}\myd{x},\]
which implies
\begin{equation}\label{eq:tilde-estimate-difference-2}
\begin{aligned}
\norm{\tilde{f}}{\Leb{\infty}([0,T];\Leb{2})}&\leq \norm{\tilde{f}|_{t=0}}{\Leb{2}}+\norm{\mathfrak{R}(f_1,f_2)}{\Leb{1}(0,T;\Leb{2})}\\
&\leq \norm{\tilde{f}|_{t=0}}{\Leb{2}}+T\norm{\mathfrak{R}(f_1,f_2)}{\tilde{L}_\infty^t\tSob{s-3}}.
\end{aligned}
\end{equation}
Then by \eqref{eq:estimate-I-part1} and \eqref{eq:estimate-II-part2}, we have 
  \begin{equation}\label{eq:nonlinear-difference}
   \norm{\mathfrak{R}(f_1,f_2)}{\tilde{L}_\infty([0,T];\tSob{s-3})}\leq \mathcal{F}(2\varepsilon_0)\varepsilon_0\norm{\tilde{f}}{\tilde{L}_\infty([0,T];\tSob{s})}.
   \end{equation}
Hence, it follows from \eqref{eq:tilde-estimate-difference-1} and \eqref{eq:tilde-estimate-difference-2} that
\begin{equation}\label{eq:difference-estimate-almost-final}
\norm{\tilde{f}}{\tilde{L}_\infty^t\tSob{s}}\leq  \norm{\tilde{f}|_{t=0}}{\tSob{s}}+(1+T)\mathcal{F}(2\varepsilon_0)\varepsilon_0\norm{\tilde{f}}{\tilde{L}_\infty^t\tSob{s}}.
\end{equation}

Choose $\varepsilon_0>0$ sufficiently small so that 
\[ 2\mathcal{F}(2\varepsilon_0)\varepsilon_0\leq \frac{1}{4}.\]

Now given $T>0$ and choose $n>T$. Define $t_j=jT/n$, $j=0,\dots,n$ and set $I_j=[t_{j-1},t_j]$. Then by \eqref{eq:difference-estimate-almost-final}, we have 
\begin{equation}\label{eq:time-splitting-argument}
\norm{\tilde{f}}{\tilde{L}_\infty(I_j;\tSob{s})}\leq \norm{\tilde{f}(t_{j-1})}{\tSob{s}}+(1+T/n)\mathcal{F}(2\varepsilon_0)\varepsilon_0\norm{\tilde{f}}{\tilde{L}_\infty(I_j;\tSob{s})}.
\end{equation}
This implies that 
\[ \norm{\tilde{f}}{\tilde{L}_\infty(I_j;\tSob{s})}\leq \frac{4}{3}\norm{\tilde{f}(t_{j-1})}{\tSob{s}},\quad j=0,\dots,n.\]
By iteration, we get 
\[ \norm{\tilde{f}}{\tilde{L}_\infty([0,T];\tSob{s})}\leq \sum_{k=1}^n\left(\frac{4}{3}\right)^k\norm{\tilde{f}|_{t=0}}{\tSob{s}} . \]
This completes the proof of Proposition \ref{prop:contraction-solution}.
\end{proof}

\subsection{Proof of Theorem \ref{thm:A}}
Now we are ready to prove Theorem \ref{thm:A}. We first impose the additional condition that $f_0 \in \tSob{s}$, $s>d/2+3$. Later, we prove Theorem \ref{thm:A} for general $f_0 \in \tSob{s}$, $s>d/2+1$.\bigskip

\noindent\emph{Step 1}. Fix $T>0$. We first show that for any sequence $R_n\rightarrow\infty$, the solution sequence $f_n=f_{R_n}$ is Cauchy in $\tilde{L}_\infty([0,T];\tSob{s_0})$ for $s>s_0>d/2+3$. Write $\tilde{f}=f_n-f_m$. Then $\tilde{f}$ satisfies
\[
\partial_t\tilde{f}+A\tilde{f}=\mathfrak{R}(f_n,f_m)=\mathcal{S}_{R_m}\tilde{\mathcal{R}}(f_m)-\mathcal{S}_{R_n}\tilde{\mathcal{R}}(f_n),
\]
where $\tilde{\mathcal{R}}(f)$ is given in \eqref{eq:tilde-R-expression}.

We note that 
\[ \mathfrak{R}(f_n,f_m)=\mathcal{S}_{R_n}[\tilde{\mathcal{R}}(f_n)-\tilde{\mathcal{R}}(f_m)]+(\mathcal{S}_{R_n}-\mathcal{S}_{R_m})(\tilde{\mathcal{R}}(f_m)). \]

Then by Lemma \ref{lem:truncation-Fourier} and \eqref{eq:final-remainder-estimate}, we have
\begin{align}\label{eq:difference-estimate-approximate-1} 
&\norm{(\mathcal{S}_{R_n}-\mathcal{S}_{R_m})(\tilde{\mathcal{R}}(f_m))}{\tSob{s_0-3}}\\
&\apprle \max \left\{(1+R_n)^{-(s-s_0)},(1+R_m)^{-(s-s_0)}\right\}\mathcal{F}(\norm{f_m}{\tilde{L}_\infty^t\tSob{s}})\norm{f_m}{\tilde{L}_\infty(0,T;\tSob{s})}^2\nonumber\\
&\apprle \max \left\{(1+R_n)^{-(s-s_0)},(1+R_m)^{-(s-s_0)}\right\}\mathcal{F}(2\varepsilon_0)\varepsilon_0^2.\nonumber
\end{align}

By Lemma \ref{lem:truncation-Fourier} and \eqref{eq:nonlinear-difference}, we have
\begin{equation}\label{eq:difference-estimate-approximate-2}
\norm{\mathcal{S}_{R_n}[\tilde{\mathcal{R}}(f_n)-\tilde{\mathcal{R}}(f_m)]}{\tilde{L}_\infty(0,T;\tSob{s_0-3})}\leq \mathcal{F}(2\varepsilon_0)\varepsilon_0\norm{\tilde{f}}{\tilde{L}_\infty(0,T;\tSob{s_0})}.
\end{equation}
Following the proof in \eqref{eq:tilde-estimate-difference-1} and \eqref{eq:tilde-estimate-difference-2}, we have 
\begin{align*}
\norm{\tilde{f}}{\tilde{L}_\infty([0,T];\tSob{s_0})}&\leq \norm{\tilde{f}|_{t=0}}{\tSob{s_0}}+(1+T)\norm{\mathfrak{R}(f_n,f_m)}{\tilde{L}_\infty^t\tSob{s_0-3}}.
\end{align*}
Hence, it follows from \eqref{eq:difference-estimate-approximate-1} and \eqref{eq:difference-estimate-approximate-2} that 
\begin{align*}
&\relphantom{=}\norm{\tilde{f}}{\tilde{L}_\infty([0,T];\tSob{s_0})}\\
&\leq \norm{\tilde{f}|_{t=0}}{\tSob{s_0}}+(1+T)\max \left\{(1+R_n)^{-(s-s_0)},(1+R_m)^{-(s-s_0)}\right\} \mathcal{F}(2\varepsilon_0)\varepsilon_0^2 \\
&\relphantom{=}+(1+T)\mathcal{F}(2\varepsilon_0)\varepsilon_0\norm{\tilde{f}}{\tilde{L}_\infty(0,T;\tSob{s_0})}.
\end{align*}
Then by the time-splitting argument as in \eqref{eq:time-splitting-argument}, we get 
\begin{align*}
\norm{\tilde{f}}{\tilde{L}_\infty(0,T;\tSob{s_0})}&\apprle_T \norm{\tilde{f}(0)}{\tSob{s_0}}+\max\left\{(1+R_n)^{-(s-s_0)},(1+R_m)^{-(s-s_0)}\right\}\mathcal{F}(2\varepsilon_0)\varepsilon_0^2.
\end{align*}
Hence, $\{f_n\}$ is Cauchy in $\tilde{L}_\infty(0,T;\tSob{s_0})$ and the limit $f=\lim_{n\rightarrow\infty}f_n$ exists in $\tilde{L}_\infty(0,T;\tSob{s_0})$. Moreover, since $T>0$ is arbitrary and $f_R \in C^1([0,\infty);\tSob{s_0})$, it follows that $f\in C([0,\infty);\tSob{s_0})$. 

Next, we show that $f$ is a solution to \eqref{eq:Muskat-reform-revisit}. Since
\begin{align*}
&\relphantom{=}\mathcal{S}_{R_n}[G(f_n)(f_n+H(f_n))]-G(f)(f+H(f))\\
&=\mathcal{S}_{R_n}[Af_n -\tilde{\mathcal{R}}(f_n)]-[Af-\tilde{\mathcal{R}}(f)]\\
&=\mathcal{S}_{R_n}[A(f_n-f)-\tilde{\mathcal{R}}(f_n)+\tilde{\mathcal{R}}(f)]+(\mathcal{S}_{R_n}-I)(Af-\tilde{\mathcal{R}}(f)),
\end{align*}
we have
\begin{align*}
&\relphantom{=}\norm{\mathcal{S}_{R_n}[G(f_n)(f_n+H(f_n))]-G(f)(f+H(f))}{\tLeb{\infty}^t\bes{s_0-3}{2,1}}\\
&\leq \norm{A(f_n-f)}{\tLeb{\infty}^t\tSob{s_0-3}}+\norm{\tilde{\mathcal{R}}(f_n)-\tilde{\mathcal{R}}(f)}{\tLeb{\infty}^t\tSob{s_0-3}}\\
&\relphantom{=}+\norm{(\mathcal{S}_{R_n}-I)(Af-\tilde{\mathcal{R}}(f))}{\tLeb{\infty}^t\tSob{s_0-3}}.
\end{align*}
Then by \eqref{eq:estimate-I-part1}, \eqref{eq:estimate-II-part2}, and Lemma \ref{lem:truncation-Fourier}, we get 
\begin{align*} 
&\apprle \norm{f_n-f}{\tLeb{\infty}^t\tSob{s_0}}+\norm{(\mathcal{S}_{R_n}-I)(Af-\tilde{\mathcal{R}}(f))}{\tLeb{\infty}^t\tSob{s_0-3}}\rightarrow 0
\end{align*}
as $n\rightarrow\infty$.

If we fix $\phi \in C_c^\infty(\mathbb{R}^d\times [0,\infty))$, then choose $T>0$ so that $\supp \phi \subset \mathbb{R}^d \times [0,T)$. Then it follows from Lemma \ref{lem:truncation-Fourier} that $f$ satisfies
\begin{equation}\label{eq:weak-solution-Muskat}
 -\int_0^\infty\int_{\mathbb{R}^d} f\partial_t \phi \myd{x}dt+\int_0^\infty \int_{\mathbb{R}^d} G(f)(f+H(f)) \phi \myd{x}dt=\int_{\mathbb{R}^d} f_0(x) \phi(x,0)\myd{x},
\end{equation}
which proves that $f$ is a weak solution of \eqref{eq:DtoN-formulation-surface-tension} with the initial data $f_0$. Since $f_R$ satisfies \eqref{eq:a-priori-global-estimate}, it follows from weak/weak* compactness that $f$ satisfies 
\begin{equation}\label{eq:a-priori-estimate-in-the-end}
\norm{f(t)}{\tSob{s}}^2+c\int_0^t \norm{f(\tau)}{\thSob{1/2}}^2+\norm{f(\tau)}{\thSob{s+3/2}}^2\myd{\tau}\leq \norm{f_0}{\tSob{s}}^2
\end{equation}
for all $t\geq 0$. Moreover, it follows from Proposition \ref{prop:continuity-DN} that $\partial_t f \in \Leb{2}(0,T;\tSob{s-3/2})$  for any $T>0$. Since $f\in \Sob{1}{2}(0,T;\tSob{s-3/2})\cap \Leb{2}(0,T;\tSob{s+3/2})$, it follows from Theorem \ref{thm:Lions-Peetre} that $f\in C([0,T];\tSob{s})$. Such a solution is unique by Proposition \ref{prop:contraction-solution}.

 \bigskip

\noindent\emph{Step 2}. To relax the assumption $f_0\in \tSob{s}$, $s>d/2+3$, we will apply the local well-posedness result of Nguyen \cite{N20} to use parabolic smoothing in a short time. After a small time, we apply Step 1 to show that the solution exists globally. 

By paralinearization argument, it was proved in \cite[Proposition 3.3]{N20} that 
\begin{equation}\label{eq:energy-estimate-surface-tension}
\frac{1}{2}\frac{d}{dt}\norm{f}{\tSob{s}}^2\leq -\frac{1}{\mathcal{F}(\norm{f}{\tSob{s}})}\norm{f}{\tSob{s+3/2}}^2+\mathcal{F}(\norm{f}{\tSob{s}})\norm{f}{\tSob{s}}^2.
\end{equation}
Then by Gronwall's inequality, we have 
\begin{equation}\label{eq:L-infty-estimate-exponential}
 \norm{f}{\Leb{\infty}([0,T];\tSob{s})}^2\leq \norm{f_0}{\tSob{s}}^2\exp\left(T\mathcal{F}(\norm{f}{\Leb{\infty}([0,T];\tSob{s})}))\right).
\end{equation}

By \eqref{eq:energy-estimate-surface-tension} and \eqref{eq:L-infty-estimate-exponential}, we have
\begin{align*}
&\relphantom{=}\norm{f}{\Leb{2}([0,T];\tSob{s+3/2})}^2\\
&\leq \norm{f_0}{\tSob{s}}^2\exp\left(T\mathcal{F}(\norm{f}{\Leb{\infty}([0,T];\tSob{s})}))\right)\mathcal{F}(\norm{f}{\Leb{\infty}([0,T];\tSob{s})})\\
&\leq  \norm{f_0}{\tSob{s}}^2\exp\left(T\mathcal{F}(\norm{f}{\Leb{\infty}([0,T];\tSob{s})}))\right)\mathcal{F}(\norm{f_0}{\tSob{s}}\exp\left(T\mathcal{F}(\norm{f}{\Leb{\infty}([0,T];\tSob{s})}) /2\right).
\end{align*}
In summary, we have obtained the following proposition.

\begin{proposition}\label{prop:a-priori-estimate-local}
Let $s>d/2+1$. Then there exists a nondecreasing function $\mathcal{F}:\mathbb{R}^+\rightarrow (0,\infty)$ depending only on $s$ and $d$ such that if $f \in \Leb{\infty}([0,T];\tSob{s})\cap\Leb{2}([0,T];\tSob{s+3/2})$ is a solution of \eqref{eq:Muskat-reform-revisit} with initial data $f_0\in\tSob{s}$, then
\begin{align*}
\norm{f}{\Leb{\infty}([0,T];\tSob{s})}&\leq \norm{f_0}{\tSob{s}}\exp\left(T\mathcal{F}(\norm{f}{\Leb{\infty}([0,T];\tSob{s})}))/2\right),\\
\norm{f}{\Leb{2}([0,T];\tSob{s+3/2})}^2&\leq \mathcal{F}_1(\norm{f_0}{\tSob{s}}\exp\left(T\mathcal{F}(\norm{f}{\Leb{\infty}([0,T];\tSob{s})})/2)\right),
\end{align*}
where $\mathcal{F}_1(m)=m^2\mathcal{F}(m)$.
\end{proposition}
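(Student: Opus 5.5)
The plan is to start from the energy inequality \eqref{eq:energy-estimate-surface-tension} of Nguyen \cite[Proposition 3.3]{N20} and integrate it in time. This inequality holds for solutions in $\Leb{\infty}([0,T];\tSob{s})\cap\Leb{2}([0,T];\tSob{s+3/2})$ with $s>d/2+1$, which is exactly the class in the statement. Throughout, write $M=\norm{f}{\Leb{\infty}([0,T];\tSob{s})}$.

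For the first estimate, we discard the dissipative term. This leaves
\[ \frac{d}{dt}\norm{f(t)}{\tSob{s}}^2\leq 2\mathcal{F}(\norm{f(t)}{\tSob{s}})\norm{f(t)}{\tSob{s}}^2\leq 2\mathcal{F}(M)\norm{f(t)}{\tSob{s}}^2, \]
where we used that $\mathcal{F}$ is nondecreasing. Gronwall's inequality then gives $\norm{f(t)}{\tSob{s}}^2\leq \norm{f_0}{\tSob{s}}^2e^{2t\mathcal{F}(M)}$. Renaming $2\mathcal{F}$ as $\mathcal{F}$ and taking the supremum over $t\in[0,T]$, this is \eqref{eq:L-infty-estimate-exponential}. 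Taking square roots yields the first bound.

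For the second estimate, we keep the dissipative term and integrate \eqref{eq:energy-estimate-surface-tension} over $[0,T]$:
\[ \frac{1}{\mathcal{F}(M)}\int_0^T\norm{f}{\tSob{s+3/2}}^2\,dt\leq \frac12\norm{f_0}{\tSob{s}}^2+\mathcal{F}(M)\,T M^2. \]
Multiplying through by $\mathcal{F}(M)$ and inserting $M^2\leq m^2$, with $m:=\norm{f_0}{\tSob{s}}e^{T\mathcal{F}(M)/2}$, bounds the right-hand side by a quantity of the form $m^2\,\mathcal{F}(M)(1+T\mathcal{F}(M))$.

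The main obstacle is the bookkeeping needed to write this bound purely as $\mathcal{F}_1(m)=m^2\mathcal{F}(m)$. The issue is that the factors of $\mathcal{F}(M)$ and $T$ are not themselves functions of $m$. We have $M\leq m$ from the first step, so by monotonicity $\mathcal{F}(M)\leq\mathcal{F}(m)$. For the factor $T\mathcal{F}(M)$, there are two options. The first is to absorb it using $T\mathcal{F}(M)\leq 2\log(m/\norm{f_0}{\tSob{s}})$. The second, which is simpler, is to restrict to $T\leq1$, since the proposition is only used for short-time smoothing. Either way, $\mathcal{F}(M)(1+T\mathcal{F}(M))\leq \tilde{\mathcal{F}}(m)$ for an enlarged nondecreasing $\tilde{\mathcal{F}}$. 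Since $\mathcal{F}$ is only determined up to enlargement, we finally replace $\mathcal{F}$ by a single nondecreasing function, depending only on $s$ and $d$, that dominates every function appearing above. This gives the second bound.
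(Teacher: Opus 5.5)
Your overall route is the paper's: invoke \eqref{eq:energy-estimate-surface-tension} from \cite{N20}, get the first bound by Gronwall (that part is correct), then integrate in time and use $M\leq m$ together with monotonicity. The gap is in the second bound.

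Estimating $\int_0^T\norm{f}{\tSob{s}}^2\,dt\leq TM^2$ produces the factor $1+T\mathcal{F}(M)$. Neither of your ways of removing it works for the proposition as stated.
\begin{itemize}
\item Your first option bounds $T\mathcal{F}(M)$ by $2\log(m/\norm{f_0}{\tSob{s}})$. This involves $\norm{f_0}{\tSob{s}}$, not only $m$: for fixed $m$ it is unbounded as $\norm{f_0}{\tSob{s}}\to 0$. So the asserted bound $\mathcal{F}(M)(1+T\mathcal{F}(M))\leq\tilde{\mathcal{F}}(m)$ does not follow.
\item Your second option proves the proposition only for $T\leq 1$, whereas the statement is for every $T>0$.
\end{itemize}
The closing ``enlarge $\mathcal{F}$'' step would only rescue this if the enlargement were made inside the exponential defining $m$, so that the exponential itself absorbs the factor $1+T\mathcal{F}(M)$. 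That mechanism is what is missing from your argument.

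The simplest repair is to keep the time profile from the first step instead of discarding it. Gronwall gives $\norm{f(t)}{\tSob{s}}^2\leq\norm{f_0}{\tSob{s}}^2e^{2t\mathcal{F}(M)}$ for each $t$, so
\[ \mathcal{F}(M)\int_0^T\norm{f}{\tSob{s}}^2\,dt\leq\tfrac12\norm{f_0}{\tSob{s}}^2\bigl(e^{2T\mathcal{F}(M)}-1\bigr). \]
Integrating \eqref{eq:energy-estimate-surface-tension} over $[0,T]$ then gives
\[ \int_0^T\norm{f}{\tSob{s+3/2}}^2\,dt\leq\tfrac12\mathcal{F}(M)\norm{f_0}{\tSob{s}}^2e^{2T\mathcal{F}(M)}. \]
Equivalently, multiply the energy inequality by $e^{-2\mathcal{F}(M)t}$ before integrating.

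After your renaming $2\mathcal{F}\to\mathcal{F}$, the right-hand side is at most $m^2\mathcal{F}(M)\leq m^2\mathcal{F}(m)=\mathcal{F}_1(m)$. This holds for every $T>0$, with no stray factor of $T$. It is exactly the bound $\norm{f_0}{\tSob{s}}^2\exp(T\mathcal{F}(M))\mathcal{F}(M)$ that the paper writes down.

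Alternatively, you can keep $TM^2$ and use $1+x\leq e^{x}$, after enlarging the function inside the exponential (e.g.\ $4\mathcal{F}$ instead of $2\mathcal{F}$). This redefines $m$ accordingly, and the first bound still holds with the larger function.
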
 

By a standard bootstrap argument, one can show that if
\[ T\leq \frac{\ln 4}{\mathcal{F}(4\norm{f_0}{\tSob{s}})},\]
then 
\begin{equation*}
\begin{aligned}
\norm{f}{\Leb{\infty}([0,T];\tSob{s})}&\leq 2\norm{f_0}{\tSob{s}},\\
\norm{f}{\Leb{2}([0,T];\tSob{s+3/2})}^2&\leq 4\mathcal{F}(2\norm{f_0}{\tSob{s}})\norm{f_0}{\tSob{s}}^2.
\end{aligned}
\end{equation*}

 We first assume 
\begin{equation*}\label{eq:initial-regularity-data-assumption}
\norm{f_0}{\tSob{s}}\leq 1.
\end{equation*}
By Proposition \ref{prop:a-priori-estimate-local} and \cite[Theorem 1.2]{N20}, if we set 
\[ T_0=\frac{\ln 4}{\mathcal{F}(4)},\]
then the problem admits a unique solution on $[0,T_0]$ satisfying 
\begin{equation*}
\begin{aligned} 
\norm{f}{\Leb{2}([0,T_0];\tSob{s+3/2})}&\leq 2\sqrt{\mathcal{F}(2\norm{f_0}{\tSob{s}})}\norm{f_0}{\tSob{s}}.
\end{aligned}
\end{equation*}

Choose $0<t_0<T_0$ so that $f(t_0)\in \tSob{s+3/2}$ and 
\begin{equation*}
 \norm{f(t_0)}{\tSob{s+3/2}}^2\leq \frac{1}{T_0} \int_0^{T_0} \norm{f(t)}{\tSob{s+3/2}}^2 dt\leq \frac{4}{T_0}\mathcal{F}(2)\norm{f_0}{\tSob{s}}^2.
 \end{equation*}

 From this $t_0$, we apply Proposition \ref{prop:a-priori-estimate-local} and \cite[Theorem 1.2]{N20} again to show that if we set
  \[ T_1=(\ln 4)\mathcal{F}\left(4\sqrt{\frac{4}{T_0}\mathcal{F}(2)} \right)^{-1}, \]
then the problem admits a unique solution on $[t_0,t_0+T_1]$ such that
\begin{equation*}
\norm{f}{\Leb{2}([t_0,t_0+T_1];\tSob{s+3})}^2\leq 4\mathcal{F}(2\norm{f(t_0)}{\tSob{s+3/2}})\norm{f(t_0)}{\tSob{s+3/2}}^2.
\end{equation*}
By a similar argument, choose $t_1>t_0$ so that $f(t_1)\in \tSob{s+3}$ and 
\begin{align*}
\norm{f(t_1)}{\tSob{s+3}}^2&\leq \frac{C_0}{T_1} \int_{t_0}^{t_0+T_1} \norm{f(t)}{\tSob{s+3}}^2 dt\\
 &\leq \frac{4C_0}{T_1}\mathcal{F}(2\norm{f(t_0)}{\tSob{s+3/2}})\norm{f(t_0)}{\tSob{s+3/2}}^2\\
 &\leq \frac{16C_0}{T_0T_1}\mathcal{F}\left(2\sqrt{\frac{4}{T_0}\mathcal{F}(2)}\right)\mathcal{F}(2)\norm{f_0}{\tSob{s}}^2.
 \end{align*}
 Since $s+3>d/2+3$, if we choose $\norm{f_0}{\tSob{s}}$ sufficiently small so that if
 \[ \frac{16C_0}{T_0T_1}\mathcal{F}\left(2\sqrt{\frac{4}{T_0}\mathcal{F}(2)}\right)\mathcal{F}(2)\norm{f_0}{\tSob{s}}^2\leq \varepsilon_0^2,\]
  then we can apply the result in Step 1 at $t=t_1$ to get the desired result. This completes the proof of Theorem \ref{thm:A}.

\begin{remark}
Using the above argument, one can show that the one-phase Muskat problem with surface tension allows instantaneous smoothing if we start with small initial data in $\tSob{s}$, $s>d/2+1$. Our argument needs the smallness assumption on the initial data.

For the critical case, Gancedo, Garc\'ia-Ju\'arez, Patel, and Strain \cite{GGPS19} proved instantaneous analyticity in Wiener space $\dot{\mathcal{F}}^{1,1}$. On the other hand, Agarwal, Patel, and Wu \cite{APW23} proved that the one-phase Muskat problem without surface tension exhibits a waiting-time phenomenon for Lipschitz initial data with an acute angle. It would be interesting to see whether we could eliminate the waiting-time phenomenon in the one-phase Muskat problem with surface tension.
\end{remark}

\subsection{Asymptotic behavior}
We prove Theorem \ref{thm:B}. By Theorem \ref{thm:A}, we know that for sufficiently small $\varepsilon_0>0$, if $\norm{f_0}{\tSob{s}}<\varepsilon_0$, then the problem admits a unique global solution in $\Leb{\infty}(0,\infty;\tSob{s})\cap\Leb{2}(0,\infty;\thSob{s+3/2})$. A similar proof to \eqref{eq:final-inequality-1} gives 
\begin{align*}
&\relphantom{=}\frac{1}{2}\norm{f(t)}{\thSob{s}}^2+\int_{t_0}^t\norm{f}{\thSob{s+3/2}}^2 d\tau\\
&\leq \frac{1}{2}\norm{f(t_0)}{\thSob{s}}^2+\int_{t_0}^t\mathcal{F}(\norm{f}{\tLeb{\infty}^t\tSob{s}})\norm{f}{\tLeb{\infty}^t\tSob{s}}(\norm{f}{\thSob{1/2}}^2+\norm{f}{\thSob{s+3/2}}^2)d\tau.
\end{align*}

Choose $\varepsilon_0>0$ sufficiently small so that $\mathcal{F}(\norm{f}{\tLeb{\infty}^t\tSob{s}})\norm{f}{\tLeb{\infty}^t\tSob{s}}\leq 1/2$, we have
\begin{equation}\label{eq:small-decay-rate}
\norm{f(t)}{\thSob{s}}^2\leq \norm{f(t_0)}{\thSob{s}}^2+\int_{t_0}^t \norm{f}{\thSob{1/2}}^2d\tau.
\end{equation}
By interpolation, $\norm{f_0}{\tSob{s}}<\varepsilon_0$, and \eqref{eq:a-priori-estimate-in-the-end}, there exists a constant $C>0$ such that
\begin{equation}\label{eq:long-time-interpolation} \norm{f(t)}{\thSob{s}}\leq C\norm{f(t)}{\Leb{2}}^{\frac{3}{2s+3}} \norm{f(t)}{\thSob{s+3/2}}^{\frac{2s}{2s+3}}\leq C\varepsilon_0^{\frac{3}{2s+3}}\norm{f(t)}{\thSob{s+3/2}}^{\frac{2s}{2s+3}}
\end{equation}
for $t\geq 0$. Since $\int_0^\infty \norm{f}{\thSob{1/2}}^2+\norm{f}{\thSob{s+3/2}}^2\myd{\tau}<\infty$, given $\delta>0$, by \eqref{eq:a-priori-estimate-in-the-end} and \eqref{eq:long-time-interpolation}, choose $t_0>0$ so that 
\begin{equation}\label{eq:smallness-control-for-large-t}
 \int_{t_0}^\infty \norm{f}{\thSob{1/2}}^2d\tau <\frac{1}{2}\delta^2\quad \text{and}\quad \norm{f(t_0)}{\thSob{s}}^2<\frac{1}{2}\delta^2. 
\end{equation}
Hence, it follows from \eqref{eq:small-decay-rate} that $\norm{f(t)}{\thSob{s}}<\delta$ for all $t>t_0$. This shows that $\norm{f(t)}{\thSob{s}}\rightarrow 0$ as $t\rightarrow\infty$. 

Furthermore, by Bernstein's inequality, if $\ell\geq0$, $s$, $p$ satisfy
\[  2<p\leq \infty\quad \text{and}\quad \frac{1}{p}-\frac{\ell}{d}\geq\frac{1}{2}-\frac{s}{d},\]
then we have
\[ \norm{|\nabla|^\ell f}{\Leb{p}}\apprle \norm{f}{\Leb{2}}^{1-\theta}\norm{f}{\thSob{s}}^\theta,   \]
where 
\[ \theta = \frac{\ell}{s}+\frac{d}{s}\left(\frac{1}{2}-\frac{1}{p}\right).\]
In particular, this implies that the Lipschitz norm of $f$ converges to zero as $t\rightarrow\infty$. This completes the proof of Theorem \ref{thm:B}. \hfill \qedsymbol

\begin{remark}\label{rem:exponential-decay}
Suppose that $f_0 \in \tSob{s}(\mathbb{T}^d)$ whose mean is zero. Then by Theorem \ref{thm:A}, there exists $\varepsilon_0>0$ such that if $\norm{f_0}{\tSob{s}(\mathbb{T}^d)}<\varepsilon_0$, then there exists a unique global strong solution $f$ to \eqref{eq:Muskat-reform-revisit} satisfying $\norm{f}{\Leb{\infty}\tSob{s}}\leq 2\varepsilon_0$. Moreover, it follows from 
\begin{equation}\label{eq:L2-energy}
\frac{d}{dt}\frac{1}{2}\norm{f}{\Leb{2}}^2+c\norm{f}{\thSob{1/2}}^2+c\norm{f}{\thSob{3/2}}^2\leq 0.
\end{equation}
Since $f_0$ has mean zero, it follows from the definition of the Dirichlet-Neumann operator that $f$ has mean zero for any $t>0$. By the Poincar\'e inequality and \eqref{eq:L2-energy}, we get 
\[ \norm{f(t)}{\Leb{2}(\mathbb{T}^d)}\leq \norm{f_0}{\Leb{2}(\mathbb{T}^d)} \exp(-Ct)\quad \text{for all } t>0.\]

Furthermore, in Step 2 of the proof of Theorem \ref{thm:A}, we proved that there exists $T>0$ such that  $\norm{f(t)}{\tSob{s+3}(\mathbb{T}^d)}\apprle \varepsilon_0$ for $t>T$. Hence, by interpolation, we have 
\[ \norm{f(t)}{\tSob{s}}\leq \norm{f(t)}{\Leb{2}}^{1-\frac{s}{s+3}} \norm{f(t)}{\tSob{s+3}}^{\frac{s}{s+3}}\apprle \varepsilon_0^{\frac{s}{s+3}}\exp(-Ct) \]
for $t>T$. 
\end{remark}

\appendix

\section{Estimates on remainders}\label{app:Qa-Qb-estimates}
Recall that $\mathcal{P}=e^{z|\nabla|}f$, $z\leq 0$. For $1\leq p\leq \infty$, it follows from Proposition \ref{prop:properties-LP} that
\[
2^{k} \norm{P_k \mathcal{P}}{\Leb{1}^z\Leb{\infty}^t\Leb{p}^x}+\norm{P_k\mathcal{P}}{\Leb{\infty}^z\Leb{\infty}^t\Leb{p}^x}\apprle \norm{P_k f}{\Leb{\infty}^t\Leb{p}^x},\quad k\in \mathbb{Z}
\]
and 
\[ \norm{P_{\leq 0} \mathcal{P}}{\Leb{1}^z\Leb{\infty}^t\Leb{p}^x}\apprle \norm{P_{\leq 0}f}{\Leb{\infty}^t\Leb{p}^x}.\]
Hence, by the Bernstein inequality, for $\sigma>0$ and $1\leq p,q,r\leq\infty$, we have
\begin{equation}\label{eq:P-graph-estimates}
\begin{aligned}
\norm{\nabla \mathcal{P}}{\tilde{L}^z_{1}\tilde{L}^t_{\infty}\hbes{\sigma}{p,q}}+\norm{\partial_z\mathcal{P}}{\tilde{L}^z_{1}\tilde{L}^t_{\infty}\hbes{\sigma}{p,q}}&\apprle \norm{f}{\tilde{L}^t_{\infty}\hbes{\sigma}{p,q}},\\
\norm{\nabla \mathcal{P}}{\Leb{\infty}^{z,t,x}}+\norm{\partial_z\mathcal{P}}{\Leb{\infty}^{z,t,x}}&\apprle \norm{f}{\tilde{L}^t_\infty\hbes{d/p+1}{p,1}}.
\end{aligned}
\end{equation}

Also, recall that $Q_a$ and $Q_b$ can be rewritten as 
\begin{align*}
Q_a[w,v;f]&=\frac{1}{1+\mathcal{B}}\nabla  \mathcal{P}\cdot \nabla  v-\frac{\mathcal{B}}{1+\mathcal{B}}(w+|\nabla|v),\\
Q_b[w,v;f]&=(|\nabla|v+w+Q_a[w,v;f])\nabla \mathcal{P}-(\partial_z\mathcal{P})\nabla  v,
\end{align*}
where 
\[ \mathcal{B}=\frac{|\nabla \mathcal{P}|^2-\partial_z\mathcal{P}}{1+\partial_z\mathcal{P}}.\]

For $\sigma>0$ and $p,q\in [1,\infty]$, recall that 
\[ \norm{v}{Z^\sigma_{p,q}}=\norm{v}{\Leb{\infty}^{z,t,x}}+\norm{v}{\tLeb{1}^z\tLeb{\infty}^t\hbes{\sigma}{p,q}}. \]

Then by Theorem \ref{thm:Besov-product}, we have 
\begin{equation}\label{eq:Z-sigma-algebra}
 \norm{fg}{Z^{\sigma}_{p,q}}\leq C_0 \norm{f}{Z^{\sigma}_{p,q}}\norm{g}{Z^{\sigma}_{p,q}}.
\end{equation}
Also, for $1\leq p,q\leq \infty$, $s\geq \sigma>0$, and $s>d/p+1$, it follows from \eqref{eq:P-graph-estimates} that 
\begin{equation}\label{eq:P-graph-estimates-Zsigma}
\norm{\nabla \mathcal{P}}{Z^{\sigma}_{p,q}}+\norm{\partial_z\mathcal{P}}{Z^{\sigma}_{p,q}}\apprle \norm{f}{\tilde{L}^t_\infty\hbes{d/p+1}{p,1}\cap \tilde{L}^t_\infty\hbes{\sigma}{p,q}}\apprle \norm{f}{\tLeb{\infty}^t\bes{s}{p,q}}.
\end{equation}

\begin{lemma}\label{lem:nonlinear-contribution}
Let $1\leq p,q\leq \infty$, $s\geq \sigma >0$, and $s>d/p+1$. Then there exists a constant $\varepsilon_0>0$ such that if $\norm{f}{\tilde{L}^t_\infty\bes{s}{p,q}}<\varepsilon_0$, then 
\begin{equation*}\label{eq:P-control-Linfty} 
\norm*{\frac{\nabla \mathcal{P}}{1+\mathcal{B}}}{Z^{\sigma}_{p,q}}+\norm*{\frac{\mathcal{B}}{1+\mathcal{B}}}{Z^{\sigma}_{p,q}}\leq \mathcal{F}(\norm{f}{\tLeb{\infty}^t\bes{s}{p,q}})\norm{f}{\tLeb{\infty}^t\bes{s}{p,q}}.
\end{equation*}
\end{lemma}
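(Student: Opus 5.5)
The plan is to treat both quantities as smooth functions, vanishing at the origin, of the two small quantities $a:=\nabla\mathcal{P}$ and $b:=\partial_z\mathcal{P}=|\nabla|\mathcal{P}$. Then the Moser estimate (Theorem \ref{thm:Moser-estimates}, in its Chemin--Lerner version) together with the algebra property \eqref{eq:Z-sigma-algebra} of $Z^\sigma_{p,q}$ and the bounds \eqref{eq:P-graph-estimates-Zsigma} should close the estimate.

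First I choose $\varepsilon_0$ so small that, by the second line of \eqref{eq:P-graph-estimates} and the embedding $\bes{s}{p,q}\hookrightarrow\hbes{d/p+1}{p,1}$ (valid since $s>d/p+1$), we have $\norm{a}{\Leb{\infty}^{z,t,x}}+\norm{b}{\Leb{\infty}^{z,t,x}}\leq C\varepsilon_0\leq 1/10$. On this range $1+b\geq 9/10$. Moreover $\mathcal{B}=(|a|^2-b)/(1+b)$ satisfies $|\mathcal{B}|\leq 1/2$, and $1+\mathcal{B}=(1+|a|^2)/(1+b)$. Hence
\[
\frac{1}{1+\mathcal{B}}=\frac{1+b}{1+|a|^2},\qquad \frac{\mathcal{B}}{1+\mathcal{B}}=\frac{|a|^2-b}{1+|a|^2}.
\]
Both expressions are $C^\infty$ functions of $(a,b)\in\mathbb{R}^{d+1}$ on the whole space, which removes any need to track a lower bound on the denominator. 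The resulting formulas are
\[
\frac{\nabla\mathcal{P}}{1+\mathcal{B}}=a\,(1+b)\,\frac{1}{1+|a|^2}=a\,(1+b)\bigl(1+F_1(a)\bigr),\qquad \frac{\mathcal{B}}{1+\mathcal{B}}=\bigl(|a|^2-b\bigr)\bigl(1+F_1(a)\bigr),
\]
where $F_1(a)=(1+|a|^2)^{-1}-1$ is smooth with $F_1(0)=0$.

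The estimates then go term by term. The $\Leb{\infty}$ part of the $Z^\sigma_{p,q}$ norm is immediate from the pointwise bounds. For the $\tLeb{1}^z\tLeb{\infty}^t\hbes{\sigma}{p,q}$ part, Theorem \ref{thm:Moser-estimates}(i) applied to $F_1$ gives
\[
\norm{F_1(a)}{\tLeb{1}^z\tLeb{\infty}^t\hbes{\sigma}{p,q}}\leq\mathcal{F}\bigl(\norm{a}{\Leb{\infty}}\bigr)\norm{a}{\tLeb{1}^z\tLeb{\infty}^t\hbes{\sigma}{p,q}}.
\]
Combining this with the $\Leb{\infty}$ bound yields $\norm{F_1(a)}{Z^\sigma_{p,q}}\apprle\norm{f}{\tLeb{\infty}^t\bes{s}{p,q}}$ via \eqref{eq:P-graph-estimates-Zsigma}. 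Each displayed expression is a sum of products in which at least one factor is $a$ or $b$, and the remaining factors are either the constant $1$ or functions with small $Z^\sigma_{p,q}$ norm. Expanding the products and applying \eqref{eq:Z-sigma-algebra} repeatedly, each term is bounded by $\mathcal{F}(\norm{f}{\tLeb{\infty}^t\bes{s}{p,q}})\norm{f}{\tLeb{\infty}^t\bes{s}{p,q}}$. One point needs care here: the constant $1$ does not belong to $Z^\sigma_{p,q}$ (it has no finite $\tLeb{1}^z$ norm). So I expand first, and I use the product rule Theorem \ref{thm:Besov-product} only for products of genuinely decaying factors. The bare terms $a$ and $-b$ are bounded directly by \eqref{eq:P-graph-estimates-Zsigma}.

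The main obstacle I expect is justifying the Moser and product estimates in the mixed Chemin--Lerner norm $\tLeb{1}^z\tLeb{\infty}^t\hbes{\sigma}{p,q}$, where the $z$-integrability is $\Leb{1}$ rather than $\Leb{\infty}$. In a product $uv$ one factor has to carry the $\tLeb{1}^z$ Besov norm and the other only $\Leb{\infty}^{z,t,x}$. Paraproduct decomposition handles this: in each of $T_uv$, $T_vu$ and $R(u,v)$, the Hölder split is taken in $z$ as $\Leb{\infty}\times\Leb{1}$ on every dyadic block, which is legitimate since $\sigma>0$. I would cite \cite[Sections 2.6]{BCD11} for this, as the paper does. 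For the composition $F_1(a)$ the standard Meyer linearization $F_1(a)=\sum_k m_k\,\Delta_k a$, with $\norm{m_k}{\Leb{\infty}}\leq \mathcal{F}(\norm{a}{\Leb{\infty}})$, likewise respects the $z$-mixed norm blockwise. This gives the required dependence only on $\norm{a}{\Leb{\infty}^{z,t,x}}$, and therefore the stated estimate.
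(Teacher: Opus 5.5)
Your proof is correct. Its skeleton matches the paper's. Both use the identities $\frac{1}{1+\mathcal{B}}=\frac{1+\partial_z\mathcal{P}}{1+|\nabla\mathcal{P}|^2}$ and $\frac{\mathcal{B}}{1+\mathcal{B}}=\frac{|\nabla\mathcal{P}|^2-\partial_z\mathcal{P}}{1+|\nabla\mathcal{P}|^2}$, followed by \eqref{eq:Z-sigma-algebra} and \eqref{eq:P-graph-estimates-Zsigma}.

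The difference lies in the one nontrivial step: bounding $(1+|\nabla\mathcal{P}|^2)^{-1}$ in $\tLeb{1}^z\tLeb{\infty}^t\hbes{\sigma}{p,q}$.

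\begin{itemize}
\item The paper expands it as a geometric series in $|\nabla\mathcal{P}|^2$. Using only the product rule of Theorem \ref{thm:Besov-product}, it shows by induction that $\norm{|\nabla\mathcal{P}|^{2k}}{\tLeb{1}^z\tLeb{\infty}^t\hbes{\sigma}{p,q}}\leq(2C_0^2C_1\norm{f}{\tLeb{\infty}^t\bes{s}{p,q}}^2)^k$. It tracks the constants explicitly so that the series sums.
\item You instead apply the composition estimate Theorem \ref{thm:Moser-estimates}(i) to $F_1(a)=(1+|a|^2)^{-1}-1$.
\end{itemize}

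The paper's route is self-contained once the bilinear estimate is available. Yours is shorter and avoids the constant bookkeeping. It also relies on a tool the paper itself uses in the same norm: Theorem \ref{thm:Moser-estimates} applied to $G_1(x)=(1+|x|^2)^{-1}$ in Lemma \ref{lem:delta-B-delta-E-estimates}.

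Your blockwise Meyer-linearization remark is the right justification that the composition estimate survives the $\tLeb{1}^z$ norm. The constants there depend only on $\norm{a}{\Leb{\infty}^{z,t,x}}$, and the dyadic summation uses $\sigma>0$.

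One small correction: with the paper's definitions, the constant $1$ does lie in $Z^\sigma_{p,q}$. Since $P_j1=0$ for every $j$, its homogeneous Besov part vanishes and $\norm{1}{Z^\sigma_{p,q}}=1$. This is why the paper can write $1+\norm{\partial_z\mathcal{P}}{Z^\sigma_{p,q}}$ directly. Expanding the products first, as you do, is harmless either way.
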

\begin{proof}
By definition of $\mathcal{B}$, we have 
\begin{equation*}
\frac{\nabla  \mathcal{P}}{1+\mathcal{B}}=\frac{\nabla \mathcal{P}(1+\partial_z\mathcal{P})}{1+|\nabla \mathcal{P}|^2}.
\end{equation*}
Then by \eqref{eq:Z-sigma-algebra} and \eqref{eq:P-graph-estimates-Zsigma}, we get 
\begin{align*}
\norm*{\frac{\nabla \mathcal{P}}{1+\mathcal{B}}}{Z^{\sigma}_{p,q}}&\apprle (1+\norm{\partial_z \mathcal{P}}{Z^{\sigma}_{p,q}})\norm*{\frac{\nabla \mathcal{P}}{1+|\nabla \mathcal{P}|^2}}{Z^{\sigma}_{p,q}}\\
&\apprle  (1+\norm{\partial_z \mathcal{P}}{Z^{\sigma}_{p,q}})\norm{\nabla \mathcal{P}}{Z^{\sigma}_{p,q}}\norm*{\frac{1}{1+|\nabla \mathcal{P}|^2}}{Z^{\sigma}_{p,q}}.
\end{align*}

Since 
\[ \frac{\mathcal{B}}{1+\mathcal{B}}=\frac{|\nabla  \mathcal{P}|^2-\partial_z\mathcal{P}}{1+|\nabla \mathcal{P}|^2},\]
it follows from \eqref{eq:Z-sigma-algebra} and \eqref{eq:P-graph-estimates-Zsigma} that
\begin{align*}
\norm*{ \frac{\mathcal{B}}{1+\mathcal{B}}}{Z^{\sigma}_{p,q}}&\apprle \norm{\nabla \mathcal{P}}{Z^{\sigma}_{p,q}}\norm*{\frac{\nabla \mathcal{P}}{1+|\nabla \mathcal{P}|^2}}{Z^{\sigma}_{p,q}}+\norm*{\frac{\partial_z\mathcal{P}}{1+|\nabla \mathcal{P}|^2}}{Z^{\sigma}_{p,q}}\\
&\apprle \left(\norm{\nabla \mathcal{P}}{Z^{\sigma}_{p,q}}\norm{\nabla \mathcal{P}}{Z^{\sigma}_{p,q}}+\norm{\partial_z\mathcal{P}}{Z^{\sigma}_{p,q}}\right)\norm*{\frac{1}{1+|\nabla \mathcal{P}|^2}}{Z^{\sigma}_{p,q}}.
\end{align*}

To estimate $(1+|\nabla \mathcal{P}|^2)^{-1}$ in $Z^{\sigma}_{p,q}$, it follows from \eqref{eq:P-graph-estimates} that  $\norm{(1+|\nabla \mathcal{P}|^2)^{-1}}{\Leb{\infty}^{z,t,x}}\leq 1$. Hence, it remains to estimate it in $\tilde{L}^z_1\tLeb{\infty}^t\hbes{\sigma}{p,q}$.  By \eqref{eq:P-graph-estimates-Zsigma}, if we assume smallness assumption on $\norm{f}{\tLeb{\infty}^t\bes{s}{p,q}}$ so that $\norm{\nabla \mathcal{P}}{\Leb{\infty}^{z,t,x}}<1$, then 
\begin{equation}\label{eq:series-expression}
\frac{1}{1+|\nabla \mathcal{P}|^2}=\sum_{k=1}^\infty (-1)^k |\nabla \mathcal{P}|^{2k}.
\end{equation}

 For this purpose, set 
\[ a_k=\norm{|\nabla \mathcal{P}|^{2k}}{\tilde{L}^z_1\tilde{L}^t_\infty\hbes{\sigma}{p,q}}.\]
Then we will show that for sufficiently small $\varepsilon_0>0$, we have 
\[ \sum_{k=1}^\infty a_k \leq \mathcal{F}(\norm{f}{\tLeb{\infty}^t\bes{s}{p,q}}) \]
for some nondecreasing function $\mathcal{F}$. We need to track the exact dependence on the constant to guarantee the convergence. Write $C_0$ and $C_1$, the implicit constants in \eqref{eq:P-graph-estimates} and Theorem \ref{thm:Besov-product}, respectively. 

By Theorem \ref{thm:Besov-product} and \eqref{eq:P-graph-estimates}, we have 
\begin{align*}
a_1=\norm{\nabla  \mathcal{P}\cdot \nabla \mathcal{P}}{\tilde{L}^z_1\tilde{L}^t_\infty\hbes{\sigma}{p,q}}&\leq 2C_1\norm{\nabla \mathcal{P}}{\Leb{\infty}^{z,t,x}}\norm{\nabla \mathcal{P}}{\tilde{L}^z_1\tilde{L}^t_\infty\hbes{\sigma}{p,q}}\\
&\leq 2C_1 C_0^2 \norm{f}{\tLeb{\infty}^t\bes{s}{p,q}}^2.
\end{align*}

We claim that 
\begin{equation}\label{eq:convergence}
 a_k \leq (2C_0^2 C_1\norm{f}{\tLeb{\infty}^t\bes{s}{p,q}}^2)^k .
\end{equation}
Suppose that the claim holds for $k$. Then by Theorem \ref{thm:Besov-product} and \eqref{eq:P-graph-estimates}, we have 
\begin{align*}
a_{k+1}&\leq C_1 \left(a_k\norm{\nabla \mathcal{P}}{\Leb{\infty}^{z,t,x}}^2+a_1\norm{|\nabla \mathcal{P}|^{2k}}{\Leb{\infty}^{z,t,x}} \right)\\
&\leq C_1(C_0^2\norm{f}{\tilde{L}^t_\infty \bes{s}{p,q}}^2 a_k+2C_1 C_0^{2k+2}\norm{f}{\tilde{L}^t_\infty\bes{s}{p,q}}^{2k+2})\\
&\leq C_1(C_0^2 (2C_0^2 C_1)^k +2C_1C_0^{2k+2})\norm{f}{\tilde{L}^t_\infty \bes{s}{p,q}}^{2k+2}\\
&\leq (2C_0^2 C_1)^{k+1}\norm{f}{\tilde{L}^t_\infty \bes{s}{p,q}}^{2k+2}.
\end{align*}
Now if we choose $\varepsilon_0>0$ sufficiently small so that $2C_0^2 C_1 \varepsilon_0^2<1$, then it follows from \eqref{eq:series-expression} and \eqref{eq:convergence} that 
\begin{equation}\label{eq:1-over-x-square-control}
\norm*{\frac{1}{1+|\nabla \mathcal{P}|^2}}{\tilde{L}^z_1\tilde{L}^t_\infty\hbes{\sigma}{p,q}}\leq \sum_{k=1}^\infty a_k \leq \mathcal{F}(\norm{f}{\tilde{L}^t_\infty\bes{s}{p,q}}).
\end{equation}
This completes the proof of Lemma \ref{lem:nonlinear-contribution}.
\end{proof}

\begin{proof}[Proof of Lemma \ref{lem:Qa-Qb-control}]
We first estimate $Q_b$ in $\tilde{L}^z_1\tLeb{\infty}^t\hbes{\sigma}{p,q}$. By Theorem \ref{thm:Besov-product} and \eqref{eq:P-graph-estimates-Zsigma},  we have 
\begin{align*}
&\norm{Q_b}{\tilde{L}^z_1\tilde{L}^t_\infty\hbes{\sigma}{p,q}}\\
&\apprle \norm{(|\nabla|v+w+Q_a)\nabla \mathcal{P}}{\tilde{L}^z_1\tilde{L}^t_\infty\hbes{\sigma}{p,q}}+\norm{(\partial_z\mathcal{P})\nabla v}{\tilde{L}^z_1\tilde{L}^t_\infty\hbes{\sigma}{p,q}}\\
&\apprle \norm{\nabla \mathcal{P}}{Z^{\sigma}_{p,q}}(\norm{|\nabla|v}{Z^{\sigma}_{p,q}}+\norm{w}{Z^\sigma_{p,q}}+\norm{Q_a}{Z^{\sigma}_{p,q}})+\norm{\partial_z\mathcal{P}}{Z^{\sigma}_{p,q}}\norm{\nabla  v}{Z^{\sigma}_{p,q}}\\
&\apprle \norm{f}{\tLeb{\infty}^t\bes{s}{p,q}}(\norm{|\nabla|v}{Z^{\sigma}_{p,q}}+\norm{w}{Z^{\sigma}_{p,q}}+\norm{Q_a}{Z^{\sigma}_{p,q}}+\norm{\nabla  v}{Z^{\sigma}_{p,q}}).
\end{align*}
Then by Bernstein's inequality, we get 
\begin{equation}\label{eq:Zsigma-norm-gradient-w}
\begin{aligned}
 \norm{\nabla v}{Z^{\sigma}_{p,q}}+\norm{|\nabla|v}{Z^{\sigma}_{p,q}}&\apprle \norm{\nabla  v}{\tilde{L}^z_1\tilde{L}^t_\infty\hbes{\sigma}{p,q}}+\norm{\nabla  v}{\tilde{L}^z_\infty\tilde{L}^t_\infty\hbes{d/p}{p,1}},\\
\norm{w}{Z^{\sigma}_{p,q}}&\apprle \norm{w}{\tilde{L}^z_1\tilde{L}^t_\infty\hbes{\sigma}{p,q}}+\norm{w}{\tilde{L}^z_\infty\tilde{L}^t_\infty\hbes{d/p}{p,1}}.
\end{aligned}
\end{equation}
Hence, it remains for us to estimate $Q_a$ in $Z^\sigma_{p,q}$. By \eqref{eq:Z-sigma-algebra}, \eqref{eq:Zsigma-norm-gradient-w}, and Lemma \ref{lem:nonlinear-contribution}, we have
\begin{equation}\label{eq:Qa-Z-sigma}
\begin{aligned}
\norm{Q_a}{Z^\sigma_{p,q}}&\apprle \norm*{\frac{\nabla \mathcal{P}}{1+\mathcal{B}}}{Z^\sigma_{p,q}}\norm{\nabla  v}{Z^\sigma_{p,q}}+\norm*{\frac{\mathcal{B}}{1+\mathcal{B}}}{Z^\sigma_{p,q}}\left(\norm{w}{Z^\sigma_{p,q}}+\norm{|\nabla|v}{Z^\sigma_{p,q}}\right)\\
&\leq  \mathcal{F}(\norm{f}{\tLeb{\infty}^t\bes{s}{p,q}})\norm{f}{\tLeb{\infty}^t\bes{s}{p,q}}(\norm{\nabla  v}{Z^\sigma_{p,q}}+\norm{w}{Z^\sigma_{p,q}}).
\end{aligned}
\end{equation}
This completes the proof of Lemma \ref{lem:Qa-Qb-control}.
\end{proof}

\begin{proof}[Proof of Lemma \ref{lem:difference-estimate}]
To estimate $\delta Q_a$ and $\delta Q_b$, we write 
\[ \mathcal{B}_j=\frac{|\nabla  \mathcal{P}_j|^2-\partial_z\mathcal{P}_j}{1+\partial_z\mathcal{P}_j}\quad \text{and}\quad \mathcal{P}_j=e^{z|\nabla|} f_j,\quad j=1,2.\]
By \eqref{eq:P-graph-estimates}, we have 
\begin{equation}\label{eq:P-graph-estimates-difference}
\begin{aligned}
\norm{\nabla \delta\mathcal{P}}{\tilde{L}^z_1\tilde{L}^t_\infty\hbes{\sigma}{p,q}}+\norm{\partial_z\delta \mathcal{P}}{\tilde{L}^z_1\tilde{L}^t_\infty\hbes{\sigma}{p,q}}&\apprle \norm{\delta f}{\tilde{L}^t_\infty\hbes{\sigma}{p,q}},\\\norm{\nabla \delta\mathcal{P}}{\Leb{\infty}^{z,t,x}}+\norm{\partial_z\delta\mathcal{P}}{\Leb{\infty}^{z,t,x}}&\apprle \norm{\delta f}{\tilde{L}^t_\infty\hbes{d/p+1}{p,1}}.
\end{aligned}
\end{equation}

If we define 
\[ B_j=\frac{1}{1+\mathcal{B}_j} \nabla \mathcal{P}_j,\,\quad\quad E_j=\frac{\mathcal{B}_j}{1+\mathcal{B}_j},\quad  \delta g = g_1-g_2,\]
\[
Q_a^j=Q_a[v_j,w_j;f_j],\quad  \text{and}\quad Q_b^j=Q_b[v_j,w_j;f_j],\quad j=1,2,\]
then 
\begin{equation}\label{eq:Qa-Qb-delta}
\begin{aligned}
\delta Q_a&=(\delta B)\nabla  v_1+B_2(\delta \nabla  v)-(\delta E)(w_1+|\nabla|v_1)-E_2(\delta w+|\nabla|\delta v),\\
\delta Q_b&=(|\nabla|\delta v+\delta w+\delta Q_a)\nabla \mathcal{P}_1+(|\nabla|v_2+w_2+Q_a^2)\nabla \delta\mathcal{P}\\
&\relphantom{=}-(\partial_z\delta\mathcal{P})\nabla  v_1-(\partial_z\mathcal{P}_2)\nabla \delta v.
\end{aligned}
\end{equation}

To estimate $\delta B$ and $\delta E$, we need the following lemma.
\begin{lemma}\label{lem:delta-B-delta-E-estimates}
Let $1\leq p,q\leq \infty$, $s\geq \sigma >0$, and $s>d/p+1$. There exists $\varepsilon_0>0$ such that if 
\[ \norm{f_i}{\tLeb{\infty}^t\bes{s}{p,q}}<\varepsilon_0,\quad i=1,2,\]
then
\[
\norm{\delta B}{Z^\sigma_{p,q}}+\norm{\delta E}{Z^\sigma_{p,q}}\leq \mathcal{F}(\norm{f_1}{\tLeb{\infty}^t\bes{s}{p,q}},\norm{f_2}{\tLeb{\infty}^t\bes{s}{p,q}})\norm{\delta f}{\tLeb{\infty}^t\hbes{\sigma}{p,q}\cap\tLeb{\infty}^t\hbes{d/p+1}{p,1}}.
\]
\end{lemma}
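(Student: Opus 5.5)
The plan is to reduce $\delta B$ and $\delta E$ to algebraic combinations of $\delta\mathcal{P}:=\mathcal{P}_1-\mathcal{P}_2=e^{z|\nabla|}\delta f$ and of the quantities already controlled in Lemma \ref{lem:nonlinear-contribution}. As in that proof, I rewrite
\[
B_j=\nabla\mathcal{P}_j(1+\partial_z\mathcal{P}_j)M_j,\qquad E_j=(|\nabla\mathcal{P}_j|^2-\partial_z\mathcal{P}_j)M_j,\qquad M_j=\frac{1}{1+|\nabla\mathcal{P}_j|^2}.
\]
Each difference then telescopes, for instance
\[
\delta B=(\nabla\delta\mathcal{P})(1+\partial_z\mathcal{P}_1)M_1+\nabla\mathcal{P}_2(\partial_z\delta\mathcal{P})M_1+\nabla\mathcal{P}_2(1+\partial_z\mathcal{P}_2)\,\delta M,
\]
and similarly for $\delta E$, with $|\nabla\mathcal{P}_1|^2-|\nabla\mathcal{P}_2|^2=\nabla\delta\mathcal{P}\cdot(\nabla\mathcal{P}_1+\nabla\mathcal{P}_2)$. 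I then apply the algebra property \eqref{eq:Z-sigma-algebra}. The constant $1$ in $1+\partial_z\mathcal{P}_j$ is handled by distributing the product: $\norm{1\cdot g}{Z^\sigma_{p,q}}=\norm{g}{Z^\sigma_{p,q}}$, so no norm of the constant function is needed. Along the way I use \eqref{eq:P-graph-estimates-Zsigma} for $\nabla\mathcal{P}_j,\partial_z\mathcal{P}_j$, the bound $\norm{M_j}{Z^\sigma_{p,q}}\le\mathcal{F}$ from \eqref{eq:1-over-x-square-control}, and \eqref{eq:P-graph-estimates-difference} for $\delta\mathcal{P}$. The last of these gives
\[
\norm{\nabla\delta\mathcal{P}}{Z^\sigma_{p,q}}+\norm{\partial_z\delta\mathcal{P}}{Z^\sigma_{p,q}}\lesssim\norm{\delta f}{\tLeb{\infty}^t\hbes{\sigma}{p,q}\cap\tLeb{\infty}^t\hbes{d/p+1}{p,1}}.
\]
This is exactly the norm on the right-hand side of the lemma, so every term containing $\nabla\delta\mathcal{P}$ or $\partial_z\delta\mathcal{P}$ is bounded as claimed.

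The remaining piece is $\delta M=M_1-M_2$, which I write as
\[
\delta M=-M_1M_2\,\nabla\delta\mathcal{P}\cdot(\nabla\mathcal{P}_1+\nabla\mathcal{P}_2).
\]
Applying \eqref{eq:Z-sigma-algebra} twice, together with the uniform $Z^\sigma_{p,q}$ bounds on $M_1,M_2$ and on $\nabla\mathcal{P}_j$, gives $\norm{\delta M}{Z^\sigma_{p,q}}\le\mathcal{F}(\norm{f_1}{},\norm{f_2}{})\norm{\delta f}{\cdots}$. One could alternatively apply Theorem \ref{thm:Moser-estimates}(ii) to $F(u)=(1+|u|^2)^{-1}$, but the resolvent identity avoids its requirement $F'(0)=0$ and its mixed sup terms, so I will use the identity.

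The main obstacle I expect is rigor in the $Z^\sigma_{p,q}$ bound for $M_j$. The series \eqref{eq:series-expression} as written has its index off by one (the sum should start at $k=0$, with the $k=0$ term the constant $1$). So I will instead use the bound on $M_j-1=\sum_{k\ge1}(-1)^k|\nabla\mathcal{P}_j|^{2k}$, which lies in $Z^\sigma_{p,q}$ with norm $\lesssim\norm{f_j}{}^2$ for $\varepsilon_0$ small, and write $M_j=1+(M_j-1)$ in every product. The constant $1$ then only ever multiplies a genuine $Z^\sigma_{p,q}$ function, and the algebra property applies to the other factors. With this bookkeeping every term is a product of bounded $Z^\sigma_{p,q}$ factors and exactly one $\delta\mathcal{P}$-factor, which yields the lemma.
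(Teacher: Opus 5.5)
Your proof is correct. Its skeleton coincides with the paper's: both write $B_j$ and $E_j$ in terms of $G_1(\nabla\mathcal{P}_j)=(1+|\nabla\mathcal{P}_j|^2)^{-1}$, telescope, and close with \eqref{eq:Z-sigma-algebra}, \eqref{eq:P-graph-estimates-difference} and \eqref{eq:1-over-x-square-control}. The paper telescopes $\delta E$ through $E_j=1-(1+\partial_z\mathcal{P}_j)G_1(\nabla\mathcal{P}_j)$ rather than through the numerator, which is immaterial.

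The one genuine difference is the term $G_1(\nabla\mathcal{P}_1)-G_1(\nabla\mathcal{P}_2)$.
\begin{itemize}
\item The paper estimates it with the Moser difference estimate, Theorem \ref{thm:Moser-estimates}(ii), carried over to Chemin--Lerner spaces, and uses the mean value theorem for the sup part.
\item You use the exact identity $\delta M=-M_1M_2\,\nabla\delta\mathcal{P}\cdot(\nabla\mathcal{P}_1+\nabla\mathcal{P}_2)$. This controls both components of the $Z^\sigma_{p,q}$ norm at once, using only the algebra property.
\end{itemize}
Your route is more elementary and self-contained, since it needs no nonlinear composition estimate in $\tLeb{1}^z\tLeb{\infty}^t\hbes{\sigma}{p,q}$. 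The paper's route is more general, since it works for any smooth $F$ with $F'(0)=0$, not just this rational one.

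Two small remarks. First, the reason you give for avoiding the Moser estimate is moot: $\nabla G_1(x)=-2x(1+|x|^2)^{-2}$ does vanish at $0$, and the paper checks exactly this. Second, you are right that \eqref{eq:series-expression} should start at $k=0$. This does not damage \eqref{eq:1-over-x-square-control}, because $P_k1=0$ for every $k$. The constant therefore contributes nothing to the homogeneous Chemin--Lerner seminorm, while $\norm{M_j}{\Leb{\infty}^{z,t,x}}\le 1$ covers the sup part. Your splitting $M_j=1+(M_j-1)$ just makes this bookkeeping explicit.
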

\begin{proof}
We write 
\begin{align*}
\delta E &=-(1+\partial_z\mathcal{P}_2)\left[G_1(\nabla \mathcal{P}_1)-G_1(\nabla \mathcal{P}_2) \right]-(\partial_z\delta\mathcal{P})G_1(\nabla \mathcal{P}_1),
\end{align*}
where 
\[ G_1(x)=\frac{1}{1+|x|^2}.\]

Since $G_1$ is smooth and $\nabla G_1(0)=0$, it follows from Theorem \ref{thm:Moser-estimates}, \eqref{eq:P-graph-estimates-difference}, and the mean value theorem that 
\begin{equation}\label{eq:G1-difference-estimate}
\begin{aligned}
&\relphantom{=}\norm{G_1(\nabla \mathcal{P}_1)-G_1(\nabla \mathcal{P}_2)}{Z^\sigma_{p,q}}\\
&\leq \mathcal{F}(\norm{f_1}{\tLeb{\infty}^t\bes{s}{p,q}},\norm{f_2}{\tLeb{\infty}^t\bes{s}{p,q}})\norm{\delta f}{\tLeb{\infty}^t\hbes{\sigma}{p,q}\cap\tLeb{\infty}^t\hbes{d/p+1}{p,1}}.
\end{aligned}
\end{equation} 
Then it follows from \eqref{eq:P-graph-estimates}, \eqref{eq:Z-sigma-algebra}, and \eqref{eq:P-graph-estimates-difference} that 
\begin{align*}
\norm{\delta E}{Z^\sigma_{p,q}}&\apprle (1+\norm{\partial_z\mathcal{P}_2}{Z^\sigma_{p,q}})\norm{G_1(\nabla \mathcal{P}_1)-G_1(\nabla \mathcal{P}_2)}{Z^\sigma_{p,q}}+\norm{\partial_z \delta\mathcal{P}}{Z^\sigma_{p,q}}\norm{G_1(\nabla \mathcal{P}_1)}{Z^\sigma_{p,q}}\\
&\apprle (1+\norm{f_2}{\tLeb{\infty}^t\bes{s}{p,q}})\norm{G_1(\nabla \mathcal{P}_1)-G_1(\nabla \mathcal{P}_2)}{Z^\sigma_{p,q}}+\norm{\delta f}{\tilde{L}^t_\infty\hbes{\sigma}{p,q}}\norm{G_1(\nabla \mathcal{P}_1)}{Z^\sigma_{p,q}}.
\intertext{Then by \eqref{eq:G1-difference-estimate} and \eqref{eq:1-over-x-square-control}, we have }
&\leq \mathcal{F}(\norm{f_1}{\tLeb{\infty}^t\bes{s}{p,q}},\norm{f_2}{\tLeb{\infty}^t\bes{s}{p,q}})\norm{\delta f}{\tLeb{\infty}^t\hbes{\sigma}{p,q}\cap\tLeb{\infty}^t\hbes{d/p+1}{p,1}}.
\end{align*} 
 
 To estimate $\delta B$, we rewrite
\begin{align*}
\delta B&=\frac{1}{1+\mathcal{B}_1}\nabla \delta \mathcal{P}+\left(\frac{1}{1+\mathcal{B}_1}-\frac{1}{1+\mathcal{B}_2} \right)\nabla \mathcal{P}_2\\
&=(1+\partial_z\mathcal{P}_1)G_1(\nabla \mathcal{P}_1)\nabla \delta \mathcal{P}+\partial_z(\delta\mathcal{P})G_1(\nabla \mathcal{P}_2)\nabla \mathcal{P}_2\\
&\relphantom{=}+(1+\partial_z\mathcal{P}_1)[G_1(\nabla \mathcal{P}_1)-G_1(\nabla \mathcal{P}_2)]\nabla \mathcal{P}_2.
\end{align*}
Then by \eqref{eq:Z-sigma-algebra}, \eqref{eq:P-graph-estimates-Zsigma}, and \eqref{eq:P-graph-estimates-difference} we have 
\begin{align*}
\norm{\delta B}{Z^\sigma_{p,q}}&\apprle (1+\norm{\partial_z \mathcal{P}_1}{Z^\sigma_{p,q}})\norm{G_1(\nabla \mathcal{P}_1)}{Z^\sigma_{p,q}}\norm{\nabla (\delta\mathcal{P})}{Z^\sigma_{p,q}}\\
&\relphantom{=}+\norm{\partial_z(\delta\mathcal{P})}{Z^\sigma_{p,q}}\norm{G_1(\nabla \mathcal{P}_2)}{Z^\sigma_{p,q}}\norm{\nabla \mathcal{P}_2}{Z^\sigma_{p,q}}\\
&\relphantom{=}+(1+\norm{\partial_z\mathcal{P}_1}{Z^\sigma_{p,q}})\norm{G_1(\nabla \mathcal{P}_1)-G_1(\nabla \mathcal{P}_2)}{Z^\sigma_{p,q}}\norm{\nabla \mathcal{P}_2}{Z^\sigma_{p,q}}.
\intertext{Hence, it follows from \eqref{eq:1-over-x-square-control} and \eqref{eq:G1-difference-estimate} that}
&\leq \mathcal{F}(\norm{f_1}{\tLeb{\infty}^t\bes{s}{p,q}},\norm{f_2}{\tLeb{\infty}^t\bes{s}{p,q}})\norm{\delta f}{\tLeb{\infty}^t\hbes{\sigma}{p,q}\cap\tLeb{\infty}^t\hbes{d/p+1}{p,1}}.
\end{align*}
This completes the proof of Lemma \ref{lem:delta-B-delta-E-estimates}.
\end{proof}

Now we are ready to complete the proof of Lemma \ref{lem:difference-estimate}. By \eqref{eq:P-graph-estimates-Zsigma}, \eqref{eq:Qa-Qb-delta}, Lemmas \ref{lem:nonlinear-contribution} and \ref{lem:delta-B-delta-E-estimates}, we have 
\begin{equation}\label{eq:Qa-difference-Zsigma}
\begin{aligned}
\norm{\delta Q_a}{Z^\sigma_{p,q}}&\apprle \norm{\delta B}{Z^\sigma_{p,q}}\norm{\nabla  v_1}{Z^\sigma_{p,q}}+\norm{B_2}{Z^\sigma_{p,q}}\norm{\delta \nabla  v}{Z^\sigma_{p,q}}\\
&\relphantom{=}+\norm{\delta E}{Z^\sigma_{p,q}}\norm{w_1+|\nabla|v_1}{Z^\sigma_{p,q}}+\norm{E_2}{Z^\sigma_{p,q}}\norm{\delta w+|\nabla|\delta v}{Z^\sigma_{p,q}}\\
&\leq \mathcal{F}(\norm{f_1}{\tLeb{\infty}^t\bes{s}{p,q}},\norm{f_2}{\tLeb{\infty}^t\bes{s}{p,q}})\norm{\delta f}{\tLeb{\infty}^t\hbes{\sigma}{p,q}}(\norm{\nabla  v_1}{Z^\sigma_{p,q}}+\norm{w_1}{Z^\sigma_{p,q}})\\
&\relphantom{=}+\mathcal{F}(\norm{f_2}{\tLeb{\infty}^t\bes{s}{p,q}})\norm{f_2}{\tilde{L}^t_\infty\bes{s}{p,q}}(\norm{\delta \nabla  v}{Z^\sigma_{p,q}}+\norm{\delta w}{Z^\sigma_{p,q}}).
\end{aligned}
\end{equation}
Similarly, we have
\begin{align*}
\norm{\delta Q_b}{Z^\sigma_{p,q}}&\apprle \norm{|\nabla|\delta v+\delta w +\delta Q_a}{Z^\sigma_{p,q}}\norm{\nabla \mathcal{P}_1}{Z^\sigma_{p,q}}+\norm{|\nabla|v_2+w_2+Q_a^2}{Z^\sigma_{p,q}}\norm{\nabla \delta\mathcal{P}}{Z^\sigma_{p,q}}\\
&\relphantom{=}+\norm{|\nabla|\delta\mathcal{P}}{Z^\sigma_{p,q}}\norm{\nabla  v_1}{Z^\sigma_{p,q}}+\norm{\partial_z\mathcal{P}_2}{Z^\sigma_{p,q}}\norm{\nabla (\delta v)}{Z^\sigma_{p,q}}\\
&\apprle \norm{f_1}{\tLeb{\infty}^t\bes{s}{p,q}}(\norm{|\nabla|\delta v}{Z^\sigma_{p,q}}+\norm{\delta w}{Z^\sigma_{p,q}}+\norm{\delta Q_a}{Z^\sigma_{p,q}})\\
&\relphantom{=}+\norm{\delta f}{\tLeb{\infty}^t\hbes{\sigma}{p,q}}(\norm{\nabla  v_1}{Z^\sigma_{p,q}}+\norm{\nabla  v_2}{Z^\sigma_{p,q}}+\norm{w_2}{Z^\sigma_{p,q}}+\norm{Q_a^2}{Z^\sigma_{p,q}})\\
&\relphantom{=}+\norm{f_2}{\tLeb{\infty}^t\bes{s}{p,q}}\norm{\nabla (\delta v)}{Z^\sigma_{p,q}}.
\end{align*}
Hence, it follows from \eqref{eq:Qa-Z-sigma} and \eqref{eq:Qa-difference-Zsigma} that
\begin{align*}
&\relphantom{=}\norm{\delta Q_b}{Z^\sigma_{p,q}}\\
&\apprle \norm{\delta f}{\tLeb{\infty}^t\hbes{\sigma}{p,q}}\mathcal{F}(\norm{f_1}{\tLeb{\infty}^t\bes{s}{p,q}},\norm{f_2}{\tLeb{\infty}^t\bes{s}{p,q}})\left(\sum_{i=1}^2(\norm{\nabla v_i}{Z^\sigma_{p,q}}+\norm{w_i}{Z^\sigma_{p,q}})\right)\\
&\relphantom{=}+(\norm{f_1}{\tLeb{\infty}^t\bes{s}{p,q}}+\norm{f_2}{\tLeb{\infty}^t\bes{s}{p,q}})(\norm{\nabla_x(\delta v)}{Z^\sigma_{p,q}}+\norm{\delta w}{Z^\sigma_{p,q}})\\
&\relphantom{=}+\mathcal{F}(\norm{f_2}{\tLeb{\infty}^t\bes{s}{p,q}})\norm{f_2}{\tLeb{\infty}^t\bes{s}{p,q}}(\norm{\nabla_x(\delta v)}{Z^\sigma_{p,q}}+\norm{\delta w}{Z^\sigma_{p,q}}).
\end{align*}
This completes the proof of Lemma \ref{lem:difference-estimate}.
\end{proof}

\section{Interpolation theorem}\label{app:interpolation}

In this appendix, we list several theorems on real interpolation spaces and prove real interpolation theorems on Chemin-Lerner type spaces for the sake of completeness. The following result is due to Lions and Peetre, whose proof can be found in standard literature on interpolation spaces. See e.g. \cite[Theorem L.4.1]{HvNVW23}.
\begin{theorem}\label{thm:Lions-Peetre}
Let $(X_0,X_1)$ be an interpolation couple of Banach spaces and let $p\in (1,\infty)$. Then we have the following continuous embedding 
\begin{equation}
\Sob{1}{p}(0,\infty;X_0)\cap\Leb{p}(0,\infty;X_1)\hookrightarrow C_b([0,\infty);(X_0,X_1)_{1-1/p,p}).
\end{equation}
\end{theorem}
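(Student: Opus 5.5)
The statement is the Lions--Peetre trace theorem: $\Sob{1}{p}(0,\infty;X_0)\cap\Leb{p}(0,\infty;X_1)\hookrightarrow C_b([0,\infty);(X_0,X_1)_{1-1/p,p})$. I would prove it with the trace (mean) characterization of the real interpolation space, in the K-method normalization. Recall that $x\in (X_0,X_1)_{\theta,p}$ if and only if
\[ x=u(0)\quad\text{for some } u \text{ with } t^{1-\theta}u\in \Leb{p}(0,\infty,\tfrac{dt}{t};X_1),\ t^{1-\theta}u'\in \Leb{p}(0,\infty,\tfrac{dt}{t};X_0), \]
and the norm is equivalent to the infimum of the sum of these two weighted norms. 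This is the classical trace method of Lions; its equivalence with the K-method is standard and I will take it as given. The point of the choice $\theta=1-1/p$ is that it makes the weight trivial: $t^{(1-\theta)p-1}=1$, so these are exactly unweighted $\Leb{p}$ norms.

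First, given $u\in \Sob{1}{p}(0,\infty;X_0)\cap\Leb{p}(0,\infty;X_1)$, I would fix $\tau\ge0$ and apply the characterization to the shift $u_\tau(t)=u(t+\tau)$. This gives
\[ \norm{u(\tau)}{(X_0,X_1)_{1-1/p,p}}\apprle \norm{u_\tau}{\Leb{p}(X_1)}+\norm{u_\tau'}{\Leb{p}(X_0)}\le \norm{u}{\Leb{p}(X_1)}+\norm{u}{\Sob{1}{p}(X_0)}. \]
The bound is uniform in $\tau$, which yields the $C_b$ part of the claim once continuity is established.

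Second, for continuity in time I would use density. Functions that are smooth in time with values in $X_0\cap X_1$ are dense in the intersection space; I would obtain them by convolving in $t$ with a mollifier after extending $u$ by reflection across $t=0$. For such functions, continuity into $X_0\cap X_1$, and hence into the interpolation space, is immediate. The uniform bound from the first step then shows that the approximations converge uniformly in $C_b([0,\infty);(X_0,X_1)_{1-1/p,p})$, so the limit is continuous.

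The main obstacle is justifying the direction ``trace $\Rightarrow$ K-norm'' without simply citing it. If I prove it directly, I would estimate $K(t,u(0))\le \norm{u(0)-a}{X_0}+t\norm{a}{X_1}$ with the averaged choice $a=\frac1t\int_0^t u(s)\,ds$. Then $u(0)-a=-\frac1t\int_0^t\int_0^s u'(r)\,dr\,ds$, so both pieces are controlled by Hardy-type averages of $\norm{u'}{X_0}$ and $\norm{u}{X_1}$. Applying Hardy's inequality in $\Leb{p}(dt/t)$ with the weight $t^{-\theta}$, $\theta=1-1/p$, bounds $\norm{t^{-\theta}K(t,u(0))}{\Leb{p}(dt/t)}$ by the $\Leb{p}$ norms of $u'$ and $u$. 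This is exactly where $1<p<\infty$ is needed, for the validity of Hardy's inequality.
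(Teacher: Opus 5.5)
The paper does not prove this statement. It quotes it from \cite[Theorem L.4.1]{HvNVW23}, and your proposal is a correct, self-contained proof of it.

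\textbf{What you can drop.} Your proof needs less than you claim. The step you call the ``main obstacle'' is the easy half of the trace-method characterization, and your averaging and Hardy computation settles it completely. With $g=\norm{u'}{X_0}+\norm{u}{X_1}$ you get $t^{-\theta}K(t,u(0))\le t^{-\theta}\int_0^t g(s)\,ds$. Hardy's inequality then gives
\[
\norm{u(0)}{(X_0,X_1)_{1-1/p,p}}\le \tfrac{p}{p-1}\bigl(\norm{u'}{\Leb{p}(0,\infty;X_0)}+\norm{u}{\Leb{p}(0,\infty;X_1)}\bigr).
\]
So you never use the harder converse, that every element of the interpolation space is a trace. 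You can drop the appeal to the equivalence of the trace and $K$-methods altogether.

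\textbf{Continuity.} Your shift-and-mollify argument works. A more direct route is to apply the same bound to $u(\cdot+\tau+h)-u(\cdot+\tau)$ and use continuity of translations in $\Leb{p}$. This gives uniform continuity in $(X_0,X_1)_{1-1/p,p}$ with no density argument.

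\textbf{What each approach buys.} The citation buys brevity. Your route gives an explicit constant and uses no hypothesis such as $X_1\hookrightarrow X_0$.

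\textbf{One correction.} Hardy's inequality $\bigl\Vert t^{-\theta}\int_0^t g\bigr\Vert_{L^p(dt/t)}\le\theta^{-1}\Vert t^{1-\theta}g\Vert_{L^p(dt/t)}$ holds for every $p\ge 1$ as long as $\theta>0$. So the restriction $1<p<\infty$ does not come from Hardy's inequality itself. It enters because it is exactly the condition $\theta=1-1/p\in(0,1)$.
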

 
The following theorem shows that Chemin-Lerner type spaces are real interpolation scales. 
\begin{theorem}\label{thm:CL-interpolation}
Let $\theta \in (0,1)$, $s_0\neq s_1\in\mathbb{R}$, and $p$, $q_0$, $q_1$, $a$, $b$, $\kappa$ be given numbers in $[1,\infty]$ satisfying $s=(1-\theta)s_0+\theta s_1$. Then
\begin{align*}
(\tLeb{a}^t\bes{s_0}{p,q_0},\tLeb{a}^t\bes{s_1}{p,q_1})_{\theta,\kappa}&=\tLeb{a}^t\bes{s}{p,\kappa}, \\
(\tLeb{b}^z\tLeb{a}^t\bes{s_0}{p,q_0},\tLeb{b}^z\tLeb{a}^t\bes{s_1}{p,q_1})_{\theta,\kappa}&=\tLeb{b}^z\tLeb{a}^t\bes{s}{p,\kappa}.
\end{align*}  
Similar results also hold for homogeneous Chemin-Lerner type spaces.
\end{theorem}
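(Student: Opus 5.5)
The statement is the real interpolation identity for Chemin--Lerner spaces, Theorem~\ref{thm:CL-interpolation}. I would reduce it to the classical interpolation of weighted sequence spaces $\ell^{s}_q(X)$, with values in the fixed Banach space $X=\Leb{a}^t\Leb{p}^x$ (or $X=\Leb{b}^z\Leb{a}^t\Leb{p}^x$ in the second case).

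The plan is the usual retraction/coretraction argument, as in the proof that Besov spaces form a real interpolation scale (see \cite{BCD11}).

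\textbf{Setting up the retraction.} Define the map
\[ \mathcal{J}u=(P_{\leq 0}u,P_0u,P_1u,\dots). \]
By the definition of the norms, $\mathcal{J}$ is an isometry from $\tLeb{a}^t\bes{\sigma}{p,q}$ into $\ell^\sigma_q(X)$, where
\[ \norm{(u_j)}{\ell^\sigma_q(X)}=\Bigl(\sum_{j\geq -1}2^{j\sigma q}\norm{u_j}{X}^q\Bigr)^{1/q}. \]
For the reverse map, set
\[ \mathcal{P}(u_j)=\tilde P_{\leq 0}u_{-1}+\sum_{j\ge0}\tilde P_j u_j, \]
where $\tilde P_j=P_{j-1}+P_j+P_{j+1}$ (and $\tilde P_{\leq 0}$ is defined analogously). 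Then $\mathcal{P}\mathcal{J}=I$ by the almost orthogonality \eqref{eq:almost-orthogonality-LP}.

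\textbf{Boundedness of $\mathcal{P}$.} Since $P_k$ commutes with the time and $z$ integrations, the Bernstein-type bounds of Proposition~\ref{prop:properties-LP} apply in $X$. In particular, $\norm{P_k\tilde P_j u_j}{X}\apprle\norm{u_j}{X}$, and only the indices with $|j-k|\le2$ contribute. This gives boundedness of $\mathcal{P}:\ell^\sigma_q(X)\to\tLeb{a}^t\bes{\sigma}{p,q}$ for every $\sigma$ and every $q$.

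\textbf{Interpolating.} By the retraction principle, the interpolation space $(\tLeb{a}^t\bes{s_0}{p,q_0},\tLeb{a}^t\bes{s_1}{p,q_1})_{\theta,\kappa}$ is the image under $\mathcal{P}$ of $(\ell^{s_0}_{q_0}(X),\ell^{s_1}_{q_1}(X))_{\theta,\kappa}$. For $s_0\ne s_1$, the latter equals $\ell^{s}_\kappa(X)$ for arbitrary $q_0,q_1$; this is the standard weighted-$\ell^q$ interpolation theorem, which holds for any Banach-space-valued sequences. Applying $\mathcal{P}$ then yields $\tLeb{a}^t\bes{s}{p,\kappa}$. The second identity follows from the identical argument with $X$ replaced, and the homogeneous case from indexing over $j\in\mathbb{Z}$, working modulo polynomials if needed.

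\textbf{Main obstacle.} The only delicate point is the $\ell^s_q$ interpolation with distinct $q_0,q_1$ when $\kappa=\infty$ or $q_i=\infty$. I would verify it directly with the $K$-functional. For a sequence supported at index $j$,
\[ K(t,u)\approx \min\bigl(2^{js_0},\,t\,2^{js_1}\bigr)\norm{u_j}{X}. \]
Summing over $j$ with a dyadic choice of $t$ gives the equivalence with the $\ell^s_\kappa$ norm. Here $s_0\ne s_1$ is used to absorb the $q_i$ through geometric series.
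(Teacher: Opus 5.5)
Your proposal is correct and is essentially the paper's argument. The paper also realizes $\tLeb{b}^z\tLeb{a}^t\bes{s}{p,q}$ as a retract of $\ell^s_q(\Leb{b}^z\Leb{a}^t\Leb{p}^x)$ via $Sf=(P_kf)_k$ and $R((f_k))=\sum_k\tilde P_kf_k$ (Proposition~\ref{prop:retract-Chemin-Lerner}), then invokes the weighted sequence-space interpolation (Proposition~\ref{prop:sequential-interpolation}, from Bergh--L\"ofstr\"om); the only difference is that you also sketch that sequence-space result via the $K$-functional. One small correction: the uniform bound $\norm{P_k\tilde P_j u_j}{X}\apprle\norm{u_j}{X}$ for arbitrary $u_j\in X$ comes from Young's inequality in $x$ (the kernels of $P_k$ have $k$-independent $\Leb{1}$ norms) followed by Minkowski in $(z,t)$, not from Bernstein's inequality, and the endpoint cases $q_i=\infty$ or $\kappa=\infty$ need no special treatment.
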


To show this theorem, we first introduce interpolation results on sequential spaces. Let $X$ be a Banach space. For $s\in \mathbb{R}$ and $q\in [1,\infty]$. We define 
\begin{align*}
\norm{\{a_k\}}{\ell^s_q(X)}&=\left(\sum_{k=-1}^\infty 2^{skq}\norm{a_k}{X}^q \right)^{1/q},\\
\norm{\{a_k\}}{\dot{\ell}^s_q(X)}&=\left(\sum_{k=-\infty}^\infty 2^{skq}\norm{a_k}{X}^q \right)^{1/q}.
\end{align*}
We denote by $\ell_q^s(X)$ and $\dot{\ell}^s_q(X)$ the space of all sequences $\{a_k\}$, $a_k\in X$ such that its $\ell_q^s(X)$ and $\dot{\ell}^s_q(X)$ norms are finite, respectively.

The following interpolation result can be found in \cite[Theorems 5.6.1]{BL76}.
\begin{proposition}\label{prop:sequential-interpolation}
Let $Y$ be a Banach space, let $q_0,q_1 \in [1,\infty]$, and let $s_0,s_1\in \mathbb{R}$, and $\theta \in (0,1)$. If $s_0\neq s_1$, $q\in [1,\infty]$, then
\[ (\ell_{q_0}^{s_0}(Y),\ell_{q_1}^{s_1}(Y))_{\theta,q}=\ell_q^{s}(Y) \]
with equivalent norms, where $s=(1-\theta)s_0+\theta s_1$. 
Similar results also hold for $\dot{\ell}_q^s$. 
\end{proposition}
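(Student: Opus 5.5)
Assume without loss of generality that $s_0<s_1$; the case $s_0>s_1$ follows by the symmetry $(X_0,X_1)_{\theta,q}=(X_1,X_0)_{1-\theta,q}$. Set $\lambda=s_1-s_0>0$ and $t_k=2^{-\lambda k}$. The plan is a sandwich argument followed by a direct computation of the $K$-functional. The embeddings $\ell^{\sigma}_1(Y)\hookrightarrow\ell^\sigma_{r}(Y)\hookrightarrow\ell^\sigma_\infty(Y)$ hold for any $r\in[1,\infty]$, and the real method is monotone in the couple. Hence
\[ (\ell^{s_0}_1(Y),\ell^{s_1}_1(Y))_{\theta,q}\hookrightarrow(\ell^{s_0}_{q_0}(Y),\ell^{s_1}_{q_1}(Y))_{\theta,q}\hookrightarrow(\ell^{s_0}_\infty(Y),\ell^{s_1}_\infty(Y))_{\theta,q}. \]
It therefore suffices to prove two inclusions: $(\ell^{s_0}_\infty,\ell^{s_1}_\infty)_{\theta,q}\hookrightarrow\ell^s_q$ and $\ell^s_q\hookrightarrow(\ell^{s_0}_1,\ell^{s_1}_1)_{\theta,q}$.

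\emph{Lower bound on $K$ for the $\ell_\infty$ couple.} Write $a=b+c$ with $b\in\ell^{s_0}_\infty$ and $c\in\ell^{s_1}_\infty$. For every fixed index $k$,
\[ \|b\|_{\ell^{s_0}_\infty}+t\|c\|_{\ell^{s_1}_\infty}\ge 2^{s_0k}\|b_k\|_Y+t2^{s_1k}\|c_k\|_Y\ge\min(2^{s_0k},t2^{s_1k})\,\|a_k\|_Y. \]
Taking $t=t_k$ gives $K(t_k,a)\ge 2^{s_0k}\|a_k\|_Y$, that is, $t_k^{-\theta}K(t_k,a)\ge 2^{sk}\|a_k\|_Y$. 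The function $K(\cdot,a)$ is nondecreasing and $K(t,a)/t$ is nonincreasing. Consequently $t^{-\theta}K(t,a)$ is comparable to $t_k^{-\theta}K(t_k,a)$ on each interval $[t_{k+1},t_k]$, and these intervals have $dt/t$-measure $\lambda\ln2$. Splitting $\int_0^\infty(t^{-\theta}K)^q\,dt/t$ over these intervals (dropping the remaining range of $t$ in the inhomogeneous case) yields $\|a\|_{\ell^s_q}\lesssim\|a\|_{\theta,q}$. The case $q=\infty$ is handled the same way with suprema.

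\emph{Upper bound on $K$ for the $\ell_1$ couple.} Here we split coordinatewise: put $a_k$ into $b$ if $2^{s_0k}\le t2^{s_1k}$, and into $c$ otherwise. This gives
\[ K(t,a)\le\sum_k\min(2^{s_0k},t2^{s_1k})\|a_k\|_Y. \]
At $t=t_j$ we obtain
\[ t_j^{-\theta}K(t_j,a)\le\sum_k \kappa(j-k)\,2^{sk}\|a_k\|_Y,\qquad \kappa(m)=2^{\lambda\theta m}\min(1,2^{-\lambda m}). \]
The kernel satisfies $\kappa(m)=2^{-\lambda(1-\theta)m}$ for $m\ge0$ and $\kappa(m)=2^{\lambda\theta m}$ for $m<0$. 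Since $0<\theta<1$, it decays exponentially in both directions, so $\kappa\in\ell_1(\mathbb{Z})$. Young's inequality on $\mathbb{Z}$ then gives $\|\{t_j^{-\theta}K(t_j,a)\}_j\|_{\ell_q}\lesssim\|a\|_{\ell^s_q}$. The same discretization of the integral as before converts this into $\|a\|_{\theta,q}\lesssim\|a\|_{\ell^s_q}$, which is the second inclusion.

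\emph{Main obstacle.} In the inhomogeneous case ($k\ge-1$) the dyadic points cover only $t\le t_{-1}=2^{\lambda}$, so the range $t\ge t_{-1}$ must be treated separately. On this range I will use $K(t,a)\le\|a\|_{\ell^{s_0}_1}\lesssim\|a\|_{\ell^s_q}$, valid because $s>s_0$. Together with $\int_{t_{-1}}^\infty t^{-\theta q}\,dt/t<\infty$, this controls the tail. For the homogeneous spaces $\dot\ell^s_q$ the dyadic points $t_k$, $k\in\mathbb{Z}$, exhaust $(0,\infty)$, so no tail arises and the argument goes through verbatim.
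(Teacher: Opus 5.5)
Your proof is correct. The sandwich between the $\ell_1$ and $\ell_\infty$ couples, the bound $K(t_k,a)\ge 2^{s_0k}\|a_k\|_Y$, the coordinatewise splitting with the two-sided exponentially decaying kernel $\kappa$ plus Young's inequality, and the treatment of $t\ge t_{-1}$ via $\|a\|_{\ell^{s_0}_1}\lesssim\|a\|_{\ell^s_q}$ (using $s>s_0$) all check out. The paper gives no argument of its own and simply cites \cite[Theorem 5.6.1]{BL76}, whose proof is essentially the $K$-functional computation you wrote down.
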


Another ingredient of proof of Theorem \ref{thm:CL-interpolation} is the retraction properties of Chemin-Lerner type spaces. 
\begin{proposition}\label{prop:retract-Chemin-Lerner}
Let $p,q,a,b\in [1,\infty]$ and $s\in\mathbb{R}$. For $k\geq 0$, set $\tilde{P}_k=P_{k-1}+P_k+P_{k+1}$, where $P_{-1}=P_{\leq 0}$ by convention. Define 
$$
\left\{\begin{aligned}
	R&:\ell_q^s(\Leb{b}^z\Leb{a}^t\Leb{p}^x)\rightarrow\tLeb{b}^z\tLeb{a}^t\bes{s}{p,q}\\
	S&:\tLeb{b}^z\tLeb{a}^t\bes{s}{p,q}\rightarrow\ell_q^s(\Leb{b}^z\Leb{a}^t\Leb{p}^x) 
\end{aligned}
\right.
$$
by
\[ R((f_k)_{k\geq -1})=\sum_{k \geq -1} \tilde{P}_kf_k,\quad Sf=({P}_kf)_{k\geq -1}.\]
Then $R$ is bounded, $S$ is an isometry, and $RS=I$. Similar results also hold for its homogeneous Chemin-Lerner spaces.
\end{proposition}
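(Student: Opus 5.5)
The statement is Proposition \ref{prop:retract-Chemin-Lerner}: $R$ is bounded, $S$ is an isometry, and $RS=I$. The plan is to check the three claims directly from the definitions. The almost-orthogonality \eqref{eq:almost-orthogonality-LP} and the uniform $\Leb{p}$-boundedness of the Littlewood--Paley projections do all the work.

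\emph{Isometry of $S$.} I would index the inhomogeneous Chemin--Lerner norm so that the block $k=-1$ is $P_{\leq 0}f$ with weight $2^{-s}$, which is comparable to $1$. Then
\[
\norm{Sf}{\ell^s_q(\Leb{b}^z\Leb{a}^t\Leb{p}^x)}=\Big(\sum_{k\geq -1}2^{skq}\norm{P_kf}{\Leb{b}^z\Leb{a}^t\Leb{p}^x}^q\Big)^{1/q}
\]
is exactly the defining norm of $\tLeb{b}^z\tLeb{a}^t\bes{s}{p,q}$, up to the harmless convention on the weight of the low-frequency block. Taking this as the definition makes $S$ an isometry.

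\emph{The identity $RS=I$.} Here $RSf=\sum_{k\geq -1}\tilde P_kP_kf$. The multiplier of $\tilde P_k$ equals $1$ on the support of the symbol of $P_k$, by the support properties of $\phi$ and $\psi$. Hence $\tilde P_kP_k=P_k$, and $\sum_k P_kf=f$ by the partition of unity.

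\emph{Boundedness of $R$.} Fix $j\geq -1$. By \eqref{eq:almost-orthogonality-LP}, $P_j\tilde P_k=0$ unless $|j-k|\leq 2$, so $P_jRf=\sum_{|k-j|\leq 2}P_j\tilde P_kf_k$. Since $P_j$ and $\tilde P_k$ are convolutions with $\Leb{1}$ kernels of uniformly bounded norm, they act boundedly on $\Leb{p}^x$ pointwise in $(z,t)$. Minkowski's inequality then gives a bound in the mixed norm:
\[
\norm{P_jRf}{\Leb{b}^z\Leb{a}^t\Leb{p}^x}\apprle \sum_{|k-j|\leq 2}\norm{f_k}{\Leb{b}^z\Leb{a}^t\Leb{p}^x}.
\]
Multiplying by $2^{sj}\approx 2^{sk}$ with constant $2^{2|s|}$ and taking the $\ell_q$ norm over $j$ yields $\norm{Rf}{\tLeb{b}^z\tLeb{a}^t\bes{s}{p,q}}\apprle \norm{(f_k)}{\ell^s_q}$. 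One must first make sure the series $\sum_k\tilde P_kf_k$ converges, say in $\mathcal{S}'$ pointwise in $(z,t)$. For finite sequences this is clear, and the general case follows by density when $q<\infty$, or by weak-$*$ convergence of the partial sums when $q=\infty$.

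\emph{Homogeneous case.} The argument is the same with $k\in\mathbb{Z}$ and $\tilde P_k=P_{k-1}+P_k+P_{k+1}$. The only extra issue is convergence of $\sum_k \tilde P_kf_k$ modulo polynomials, which I would handle by restricting to $s<d/p$ or working modulo polynomials as in \cite{BCD11}.

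I expect the main obstacle to be this convergence and well-definedness question, not the estimates. The estimates themselves are routine once almost-orthogonality is used.
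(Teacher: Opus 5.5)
Your proposal is correct and is essentially the paper's proof. As in the paper, $S$ is an isometry by the definition of the Chemin--Lerner norm. $R$ is bounded because $P_j\tilde P_k=0$ unless $|j-k|\leq 2$ and the Littlewood--Paley kernels have uniformly bounded $L_1$ norms, which is the paper's ``Young's inequality'' step. Finally, $RS=I$ follows from $\tilde P_kP_k=P_k$ together with the partition of unity.

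One point needs care, and you share it with the paper. With the literal conventions one has $\phi+\sum_{j\geq 0}\psi(2^{-j}\cdot)=1+\psi$, so $P_0$ overlaps $P_{\leq 0}$ and $\tilde P_0P_0\neq P_0$. Your support check and the identity $\sum_k P_kf=f$ both hold once the blocks are taken to be $P_{\leq 0},P_1,P_2,\dots$, with $\tilde P$ built from these.
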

\begin{proof}
Clearly, $S$ is an isometry by the definition of the Chemin-Lerner spaces. By Young's inequality, we have
\begin{align*}
\norm*{\sum_{k\geq -1} \tilde{P}_kf_k}{\tLeb{b}^z\tLeb{a}^t\bes{s}{p,q}}&\apprle \sum_{|l|\leq 2} \norm{(P_k \tilde{P}_{k+l} f_{k+l})_{k\geq -1}}{\ell_q^s(\Leb{b}^z\Leb{a}^t\Leb{p}^x)}\\
&\apprle \sum_{|l|\leq 2}  \norm{(f_{k+l})_{k\geq -1}}{\ell_q^s(\Leb{b}^z\Leb{a}^t\Leb{p}^x)}\\
&\apprle \norm{(f_k)_{k\geq -1}}{\ell^s_q(\Leb{b}^z\Leb{a}^t\Leb{p}^x)}.
\end{align*}
This implies that $R$ is bounded from $\ell_q^s(\Leb{b}^z\Leb{a}^t\Leb{p}^x)$ to $\tLeb{b}^z\tLeb{a}^t\bes{s}{p,q}$. Also, by \eqref{eq:almost-orthogonality-LP}, we see that $RS=I$. This completes the proof of of the proposition.
\end{proof}

\begin{proof}[Proof of Theorem \ref{thm:CL-interpolation}]
Theorem \ref{thm:CL-interpolation} is the consequence of Propositions \ref{prop:sequential-interpolation} and \ref{prop:retract-Chemin-Lerner}. See \cite[Theorem 6.4.5]{BL76} or \cite[Theorems 14.4.31]{HvNVW23} for details.
\end{proof}

\bibliographystyle{amsplain}

%\bibliography{Refs}

\begin{thebibliography}{10}

\bibitem{AP25}
S.~Agrawal and N.~Patel, \emph{Self-{S}imilar {S}olutions to the {H}ele-{S}haw
  problem with {S}urface {T}ension}, 2025.

\bibitem{APW23}
S.~Agrawal, N.~Patel, and S.~Wu, \emph{Rigidity of acute angled corners for one
  phase {M}uskat interfaces}, Adv. Math. \textbf{412} (2023), Paper No. 108801,
  71.% \MR{4520423}

\bibitem{A25}
T.~Alazard, \emph{Paralinearization of free boundary problems in fluid
  dynamics}, Partial differential equations: waves, nonlinearities and
  nonlocalities, Abel Symp., vol.~18, Springer, Cham, [2025] \copyright 2025,
  pp.~1--31.% \MR{4952918}

\bibitem{AB24}
T.~Alazard and D.~Bresch, \emph{Functional inequalities and strong {L}yapunov
  functionals for free surface flows in fluid dynamics}, Interfaces Free Bound.
  \textbf{26} (2024), no.~1, 1--30.% \MR{4705639}

\bibitem{ABZ14}
T.~Alazard, N.~Burq, and C.~Zuily, \emph{On the {C}auchy problem for gravity
  water waves}, Invent. Math. \textbf{198} (2014), no.~1, 71--163.% \MR{3260858}

\bibitem{AD15}
T.~Alazard and D.~Jean-Marc, \emph{Sobolev estimates for two-dimensional
  gravity water waves}, Ast\'erisque \textbf{374} (2015), viii+241 pp.

\bibitem{AMS20}
T.~Alazard, N.~Meunier, and D.~Smets, \emph{Lyapunov functions, identities and
  the {C}auchy problem for the {H}ele-{S}haw equation}, Comm. Math. Phys.
  \textbf{377} (2020), no.~2, 1421--1459.% \MR{4115021}

\bibitem{AN23}
T.~Alazard and Q.-H. Nguyen, \emph{Endpoint {S}obolev theory for the {M}uskat
  equation}, Comm. Math. Phys. \textbf{397} (2023), no.~3, 1043--1102.
 % \MR{4541917}

\bibitem{A04}
D.~M. Ambrose, \emph{Well-posedness of two-phase {H}ele-{S}haw flow without
  surface tension}, European J. Appl. Math. \textbf{15} (2004), no.~5,
  597--607.% \MR{2128613}

\bibitem{BCD11}
H.~Bahouri, J.-Y. Chemin, and R.~Danchin, \emph{Fourier analysis and nonlinear
  partial differential equations}, Grundlehren der mathematischen
  Wissenschaften [Fundamental Principles of Mathematical Sciences], vol. 343,
  Springer, Heidelberg, 2011.% \MR{2768550}

\bibitem{BL76}
J.~Bergh and J.~L\"ofstr\"om, \emph{Interpolation spaces. {A}n introduction},
  Grundlehren der Mathematischen Wissenschaften, vol. No. 223, Springer-Verlag,
  Berlin-New York, 1976.% \MR{482275}

\bibitem{BCG25}
{E}. Bocchi, \'A. Castro, and {F}. Gancedo, \emph{Global-in-time estimates for
  the 2d one-phase {M}uskat problem with contact points}, Comm. Math. Phys.
  (2026), to appear, arXiv:2502.19286.

\bibitem{BN24}
J.~Brownfield and H.~Q. Nguyen, \emph{Slowly traveling gravity waves for
  {D}arcy flow: existence and stability of large waves}, Comm. Math. Phys.
  \textbf{405} (2024), no.~10, Paper No. 222, 25.% \MR{4797733}

\bibitem{CCFG13}
\'A. Castro, D.~C\'ordoba, C.~Fefferman, and F.~Gancedo, \emph{Breakdown of
  smoothness for the {M}uskat problem}, Arch. Ration. Mech. Anal. \textbf{208}
  (2013), no.~3, 805--909.% \MR{3048596}

\bibitem{CCFG16}
\bysame, \emph{Splash singularities for the one-phase {M}uskat problem in
  stable regimes}, Arch. Ration. Mech. Anal. \textbf{222} (2016), no.~1,
  213--243.% \MR{3519969}

\bibitem{CCFG12}
{{\'A}}. Castro, D.~C\'ordoba, C.~Fefferman, F.~Gancedo, and
  J.~G\'omez-Serrano, \emph{Finite time singularities for water waves with
  surface tension}, J. Math. Phys. \textbf{53} (2012), no.~11, 115622, 26.
 % \MR{3026567}

\bibitem{CCCG12}
\'A. Castro, D.~C\'ordoba, C.~Fefferman, F.~Gancedo, and
  M.~L\'opez-Fern\'andez, \emph{Rayleigh-{T}aylor breakdown for the {M}uskat
  problem with applications to water waves}, Ann. of Math. (2) \textbf{175}
  (2012), no.~2, 909--948.% \MR{2993754}

\bibitem{CL95}
J.-Y. Chemin and N.~Lerner, \emph{Flot de champs de vecteurs non lipschitziens
  et \'equations de {N}avier-{S}tokes}, J. Differential Equations \textbf{121}
  (1995), no.~2, 314--328.% \MR{1354312}

\bibitem{CHN24}
K.~Chen, R.~Hu, and Q.-H. Nguyen, \emph{Well-posedness for local and nonlocal
  quasilinear evolution equations in fluids and geometry}, arXiv:2407.05313,
  2024.

\bibitem{CNX22}
K.~Chen, Q.-H. Nguyen, and Y.~Xu, \emph{The {M}uskat problem with {$C^1$}
  data}, Trans. Amer. Math. Soc. \textbf{375} (2022), no.~5, 3039--3060.
 % \MR{4402655}

\bibitem{C93}
X.~Chen, \emph{The {H}ele-{S}haw problem and area-preserving curve-shortening
  motions}, Arch. Rational Mech. Anal. \textbf{123} (1993), no.~2, 117--151.
 % \MR{1219420}

\bibitem{CGS16}
C.~H.~A. Cheng, R.~Granero-Belinch\'on, and S.~Shkoller, \emph{Well-posedness
  of the {M}uskat problem with {$H^2$} initial data}, Adv. Math. \textbf{286}
  (2016), 32--104.% \MR{3415681}

\bibitem{CJK07}
S.~Choi, D.~Jerison, and I.~Kim, \emph{Regularity for the one-phase
  {H}ele-{S}haw problem from a {L}ipschitz initial surface}, Amer. J. Math.
  \textbf{129} (2007), no.~2, 527--582.% \MR{2306045}

\bibitem{CJK09}
\bysame, \emph{Local regularization of the one-phase {H}ele-{S}haw flow},
  Indiana Univ. Math. J. \textbf{58} (2009), no.~6, 2765--2804.% \MR{2603767}

\bibitem{CK06}
S.~Choi and I.~Kim, \emph{Waiting time phenomena of the {H}ele-{S}haw and the
  {S}tefan problem}, Indiana Univ. Math. J. \textbf{55} (2006), no.~2,
  525--551.% \MR{2225444}

\bibitem{CCGRS16}
P.~Constantin, D.~C\'ordoba, F.~Gancedo, L.~Rodr\'iguez-Piazza, and R.~M.
  Strain, \emph{On the {M}uskat problem: global in time results in 2{D} and
  3{D}}, Amer. J. Math. \textbf{138} (2016), no.~6, 1455--1494.% \MR{3595492}

\bibitem{CCGS13}
P.~Constantin, D.~C\'ordoba, F.~Gancedo, and R.~M. Strain, \emph{On the global
  existence for the {M}uskat problem}, J. Eur. Math. Soc. (JEMS) \textbf{15}
  (2013), no.~1, 201--227.% \MR{2998834}

\bibitem{CP93}
P.~Constantin and M.~Pugh, \emph{Global solutions for small data to the
  {H}ele-{S}haw problem}, Nonlinearity \textbf{6} (1993), no.~3, 393--415.
 % \MR{1223740}

\bibitem{CCG11}
A.~C\'ordoba, D.~C\'ordoba, and F.~Gancedo, \emph{Interface evolution: the
  {H}ele-{S}haw and {M}uskat problems}, Ann. of Math. (2) \textbf{173} (2011),
  no.~1, 477--542.% \MR{2753607}

\bibitem{CG07}
D.~C\'ordoba and F.~Gancedo, \emph{Contour dynamics of incompressible 3-{D}
  fluids in a porous medium with different densities}, Comm. Math. Phys.
  \textbf{273} (2007), no.~2, 445--471.% \MR{2318314}

\bibitem{CL21}
D.~C\'ordoba and O.~Lazar, \emph{Global well-posedness for the 2{D} stable
  {M}uskat problem in {$H^{3/2}$}}, Ann. Sci. \'Ec. Norm. Sup\'er. (4)
  \textbf{54} (2021), no.~5, 1315--1351.% \MR{4363243}

\bibitem{CP17}
D.~C\'ordoba and T.~Pernas-Casta\~no, \emph{Non-splat singularity for the
  one-phase {M}uskat problem}, Trans. Amer. Math. Soc. \textbf{369} (2017),
  no.~1, 711--754.% \MR{3557791}

\bibitem{CS14}
D.~Coutand and S.~Shkoller, \emph{On the finite-time splash and splat
  singularities for the 3-{D} free-surface {E}uler equations}, Comm. Math.
  Phys. \textbf{325} (2014), no.~1, 143--183.% \MR{3147437}

\bibitem{D1856}
H.~Darcy, \emph{Les fontaines publiques de la ville de dijon}, Dalmont, Paris,
  1856.

\bibitem{DGN23}
H.~Dong, F.~Gancedo, and H.~Q. Nguyen, \emph{Global well-posedness for the
  one-phase {M}uskat problem}, Comm. Pure Appl. Math. \textbf{76} (2023),
  no.~12, 3912--3967.% \MR{4655356}

\bibitem{DGN23b}
\bysame, \emph{Global well-posedness for the one-phase {M}uskat problem in
  3{D}}, arXiv:2308.14230, 2023.

\bibitem{EM11}
J.~Escher and B.-V. Matioc, \emph{On the parabolicity of the {M}uskat problem:
  well-posedness, fingering, and stability results}, Z. Anal. Anwend.
  \textbf{30} (2011), no.~2, 193--218.% \MR{2793001}

\bibitem{ES97b}
J.~Escher and G.~Simonett, \emph{Classical solutions for {H}ele-{S}haw models
  with surface tension}, Adv. Differential Equations \textbf{2} (1997), no.~4,
  619--642.% \MR{1441859}

\bibitem{ES97a}
\bysame, \emph{Classical solutions of multidimensional {H}ele-{S}haw models},
  SIAM J. Math. Anal. \textbf{28} (1997), no.~5, 1028--1047.% \MR{1466667}

\bibitem{FN21}
P.~T. Flynn and H.~Q. Nguyen, \emph{The vanishing surface tension limit of the
  {M}uskat problem}, Comm. Math. Phys. \textbf{382} (2021), no.~2, 1205--1241.
 % \MR{4227171}

\bibitem{GGPS19}
F.~Gancedo, E.~Garc\'ia-Ju\'arez, N.~Patel, and R.~M. Strain, \emph{On the
  {M}uskat problem with viscosity jump: global in time results}, Adv. Math.
  \textbf{345} (2019), 552--597.% \MR{3899970}

\bibitem{GGPS23}
\bysame, \emph{Global regularity for gravity unstable {M}uskat bubbles}, Mem.
  Amer. Math. Soc. \textbf{292} (2023), no.~1455, v+87.% \MR{4679708}

\bibitem{GL22}
F.~Gancedo and O.~Lazar, \emph{Global well-posedness for the three dimensional
  {M}uskat problem in the critical {S}obolev space}, Arch. Ration. Mech. Anal.
  \textbf{246} (2022), no.~1, 141--207.% \MR{4487512}

\bibitem{GS14}
F.~Gancedo and R.~M. Strain, \emph{Absence of splash singularities for surface
  quasi-geostrophic sharp fronts and the {M}uskat problem}, Proc. Natl. Acad.
  Sci. USA \textbf{111} (2014), no.~2, 635--639.% \MR{3181769}

\bibitem{GGHP24}
E.~Garc\'ia-Ju\'arez, J.~G\'omez-Serrano, S.~V. Haziot, and B.~Pausader,
  \emph{Desingularization of small moving corners for the {M}uskat equation},
  Ann. PDE \textbf{10} (2024), no.~2, Paper No. 17, 71.% \MR{4790934}

\bibitem{GGNP22}
E.~Garc\'ia-Ju\'arez, J.~G\'omez-Serrano, H.~Q. Nguyen, and B.~Pausader,
  \emph{Self-similar solutions for the {M}uskat equation}, Adv. Math.
  \textbf{399} (2022), Paper No. 108294, 30.% \MR{4385135}

\bibitem{GP25}
D.~Ginsberg and F.~Pusateri, \emph{Long {T}ime {R}egularity for 3{D} {G}ravity
  {W}aves with {V}orticity}, Ann. PDE \textbf{11} (2025), no.~2, Paper No. 23.
 % \MR{4935665}

\bibitem{GHS07}
Y.~Guo, C.~Hallstrom, and D.~Spirn, \emph{Dynamics near unstable, interfacial
  fluids}, Comm. Math. Phys. \textbf{270} (2007), no.~3, 635--689.% \MR{2276460}

\bibitem{HLS94}
T.~Y. Hou, J.~S. Lowengrub, and M.~J. Shelley, \emph{Removing the stiffness
  from interfacial flows with surface tension}, J. Comput. Phys. \textbf{114}
  (1994), no.~2, 312--338.% \MR{1294935}

\bibitem{HvNVW23}
T.~Hyt\"onen, J.~van Neerven, M.~Veraar, and L.~Weis, \emph{Analysis in
  {B}anach spaces. {V}ol. {III}. {H}armonic analysis and spectral theory},
  Ergebnisse der Mathematik und ihrer Grenzgebiete. 3. Folge. A Series of
  Modern Surveys in Mathematics [Results in Mathematics and Related Areas. 3rd
  Series. A Series of Modern Surveys in Mathematics], vol.~76, Springer, Cham,
  2023.% \MR{4696978}

\bibitem{IP18}
A.~D. Ionescu and F.~Pusateri, \emph{Recent advances on the global regularity
  for irrotational water waves}, Philos. Trans. Roy. Soc. A \textbf{376}
  (2018), no.~2111, 20170089, 28.% \MR{3744204}

\bibitem{JKM21}
M.~Jacobs, I.~Kim, and A.~R. M\'esz\'aros, \emph{Weak solutions to the {M}uskat
  problem with surface tension via optimal transport}, Arch. Ration. Mech.
  Anal. \textbf{239} (2021), no.~1, 389--430.% \MR{4198722}

\bibitem{KZ24}
I.~Kim and Y.~P. Zhang, \emph{Regularity of {H}ele-{S}haw flow with source and
  drift}, Ann. PDE \textbf{10} (2024), no.~2, Paper No. 20, 56.% \MR{4800678}

\bibitem{K03}
I.~C. Kim, \emph{Uniqueness and existence results on the {H}ele-{S}haw and the
  {S}tefan problems}, Arch. Ration. Mech. Anal. \textbf{168} (2003), no.~4,
  299--328.% \MR{1994745}

\bibitem{KH40}
M.~King~Hubbert, \emph{The {T}heory of {G}round-{W}ater {M}otion}, J. of
  {G}eology \textbf{48} (1940), no.~8, 785--944.

\bibitem{L24}
O.~Lazar, \emph{Global well-posedness of arbitrarily large {L}ipschitz
  solutions for the {M}uskat problem with surface tension}, 2024.

\bibitem{MM21}
A.-V. Matioc and B.-V. Matioc, \emph{The {M}uskat problem with surface tension
  and equal viscosities in subcritical {$L_p$}-{S}obolev spaces}, J. Elliptic
  Parabol. Equ. \textbf{7} (2021), no.~2, 635--670.% \MR{4342643}

\bibitem{M19}
B.-V. Matioc, \emph{The {M}uskat problem in two dimensions: equivalence of
  formulations, well-posedness, and regularity results}, Anal. PDE \textbf{12}
  (2019), no.~2, 281--332.% \MR{3861893}

\bibitem{M34}
M.~Muskat, \emph{Two fluid systems in porous media. {T}he encroachment of water
  into an oil sand}, J. Appl. Phys. \textbf{5} (1934), no.~9, 250--264.

\bibitem{N25}
J.~Na, \emph{Global self-similar solutions for the 3{D} {M}uskat equation},
  Arch. Ration. Mech. Anal. \textbf{249} (2025), no.~5, Paper No. 53, 64.
 % \MR{4947622}

\bibitem{N20}
H.~Q. Nguyen, \emph{On well-posedness of the {M}uskat problem with surface
  tension}, Adv. Math. \textbf{374} (2020), 107344, 35.% \MR{4131404}

\bibitem{N22}
\bysame, \emph{Global solutions for the {M}uskat problem in the scaling
  invariant {B}esov space {$\dot {B}^1_{\infty,1}$}}, Adv. Math. \textbf{394}
  (2022), Paper No. 108122, 28.% \MR{4348695}

\bibitem{N23}
\bysame, \emph{Coercivity of the {D}irichlet-to-{N}eumann operator and
  applications to the {M}uskat problem}, Acta Math. Vietnam. \textbf{48}
  (2023), no.~1, 51--62.% \MR{4581109}

\bibitem{N24}
\bysame, \emph{Large traveling capillary-gravity waves for {D}arcy flow},
  Nonlinearity \textbf{39} (2026), no.~3, 035008.

\bibitem{NP20}
H.~Q. Nguyen and B.~Pausader, \emph{A paradifferential approach for
  well-posedness of the {M}uskat problem}, Arch. Ration. Mech. Anal.
  \textbf{237} (2020), no.~1, 35--100.% \MR{4090462}

\bibitem{NT24}
H.~Q. Nguyen and I.~Tice, \emph{Traveling wave solutions to the one-phase
  {M}uskat problem: existence and stability}, Arch. Ration. Mech. Anal.
  \textbf{248} (2024), no.~1, Paper No. 5, 58.% \MR{4690615}

\bibitem{STT24}
R.~Schwab, S.~Tu, and O.~Turanova, \emph{Well-posedness for viscosity solutions
  of the one-phase {M}uskat problem in all dimensions}, arXiv:2404.10972.

\bibitem{S23}
J.~Shi, \emph{Regularity of solutions to the {M}uskat equation}, Arch. Ration.
  Mech. Anal. \textbf{247} (2023), no.~3, Paper No. 36, 46.% \MR{4575396}

\bibitem{S24}
\bysame, \emph{The regularity of the solutions to the {M}uskat equation: {T}he
  degenerate regularity near the turnover points}, Adv. Math. \textbf{454}
  (2024), 109850, 205.

\bibitem{NS26}
N.~Stevenson and H.~Q. Nguyen, \emph{On large periodic traveling surface waves
  in porous media}, arXiv:2601.11800.

\end{thebibliography}

\providecommand{\bysame}{\leavevmode\hbox to3em{\hrulefill}\thinspace}
\providecommand{\MR}{\relax\ifhmode\unskip\space\fi MR }
% \MRhref is called by the amsart/book/proc definition of \MR.
\providecommand{\MRhref}[2]{%
  \href{http://www.ams.org/mathscinet-getitem?mr=#1}{#2}
}
\providecommand{\href}[2]{#2}

\end{document}